\theoremstyle{definition}
\newtheorem{definition}{Definition}[section]
\newtheorem{rmk}[definition]{Remark}
\theoremstyle{plain}
\newtheorem{theorem}[definition]{Theorem}
\newtheorem{prop}[definition]{Proposition}
\newtheorem{lem}[definition]{Lemma}
\newtheorem{prob}[definition]{Problem}
\newcommand{\mb}{\mathbb}
\newcommand{\mf}{\mathfrak}
\newcommand{\bs}{\boldsymbol}
\newcommand{\Det}{\textup{Det}}
\begin{document}

\title{Sums of reciprocals of fractional parts II}

\author[Reynold Fregoli]{Reynold Fregoli\vspace{4mm}\\
\MakeLowercase{with an appendix by} \\ Michael Bj\"orklund, Reynold Fregoli, \MakeLowercase{and} Alexander Gorodnik\vspace{2mm}}

\address{MB: Department of Mathematics, Chalmers, Gothenburg, Sweden; RF\,\&\,AG: Department of Mathematics, University of Z\"urich, Switzerland}

\thanks{MB was supported by Swedish VR-grant 11253320,  RF and  AG were supported by SNF grant 200021--182089.}

\subjclass{11P21, 11H16, 11J13; 11Jxx}

\dedicatory{}

\keywords{}

\begin{abstract}
We prove an estimate for the number of lattice points lying in certain non-convex Euclidean domains of interest in Diophantine approximation. As an application, we generalise a result of Kruse (1964) concerning the almost sure order of magnitude of sums of reciprocals of fractional parts and solve a conjecture posed by Beresnevich, Haynes, and Velani. The methods are based both on the geometry of numbers and on probability theory.
\end{abstract}

\maketitle

\section{Introduction}

\subsection{Counting Points and Diophantine Properties of Lattices}

Let $\varepsilon>0$, $T\geq 1$, and $(\alpha_1,\alpha_2)\in\mb R^2$. Analysing the number of integer solutions $(p_1,p_2,q)$ to the system of inequalities
$$
\begin{cases}
|q\alpha_1+p_1|\cdot|q\alpha_2+p_2|<\varepsilon,\\[1mm] |q\alpha_i+p_1|\leq 1/2 \quad i=1,2,\\[1mm]
0<q<T    
\end{cases}
$$
is naturally relevant to a variety of number-theoretic applications. This is especially true in the field of Diophantine approximation \cite{Wid17},\cite{Fre21-2},\cite[Section 3]{CT19}, where the magnitude of products of rational approximations is the central object of a major unsolved problem: the Littelwood Conjecture (see \cite{Bug14}).

Generalizing the above setup, we let $\bs{L}\in\mb{R}^{m\times n}$ be a matrix with real coefficients, $\bs{T}\in[1,+\infty)^{n}$, and $R,\varepsilon>0$. For a choice of these parameters, we define
\begin{multline}
\label{eq:M}
M(\bs{L},\varepsilon,R,\bs{T}):=\left\{(\bs{p},\bs{q})\in\mb{Z}^{m}\times\left(\mb{Z}^{n}\setminus\{\bs{0}\}\right):\prod_{i=1}^{m}\left|L_{i}\bs{q}+p_{i}\right|<\varepsilon,\right.\\
|L_{i}\bs{q}+p_{i}|\leq R\ i=1,\dotsc,m,\ \ |q_{j}|\leq T_{j}\ j=1,\dotsc,n\Bigg\},
\end{multline}
where $L_i$ for $i=1,\dotsc,m$ denote the rows of the matrix $\bs L$. The primary objective of this paper is to prove general upper bounds for the cardinality of the set $M(\bs{L},\varepsilon,R,\bs{T})$, depending on the properties of the matrix $\bs L$. Our main result carries a number of applications to Diophantine approximation, which we aim to illustrate in the second part of this introduction. 

The problem of bounding $\# M(\bs{L},\varepsilon,R,\bs{T})$ may naturally be recast in terms of lattices. Our goal will be, in fact, to count the number of points in the intersection of the lattice
$$\Lambda_{\bs L}:=\begin{pmatrix}
I_m & \bs L \\
\bs 0 & I_n
\end{pmatrix}\mb Z^{m+n}$$
\noindent with the domain
\begin{equation}
\label{eq:vol}
\left\{|x_1\dotsm x_m|<\varepsilon,\ |x_{1}|,\dotsc,|x_m|\leq R\right\}\times[-T_1,T_1]\times\dotsb \times[-T_n,T_n],
\end{equation}
with the additional restriction that $(x_{m+1},\dotsc,x_{m+n})\neq \bs 0$. This problem is hard to solve for a general lattice, due to the fact that (\ref{eq:vol}) is \emph{not} a convex set. Nonetheless, we shall show that if the matrix $\bs L$ satisfies some appropriate Diophantine conditions, it is possible to obtain non-trivial asymptotic estimates.

For $x\in\mb R$ let $\|x\|$ denote the distance form $x$ to the nearest integer and set $x^{+}:=\max\{|x|,1\}$.

\begin{definition}
\label{def:multbad}
Given a non-increasing function $\phi:[0,+\infty)\to (0,1]$, we say that a matrix $\bs{L}\in\mb{R}^{m\times n}$ is $\phi$-multiplicatively-badly-approximable if the inequality
\begin{equation}
\label{eq:condition1}
q_1^{+}\dotsm q_{n}^{+}\prod_{i=1}^{m}\|L_{i}\bs{q}\|\geq\phi\left(\left(q_1^{+}\dotsm q_{n}^{+}\right)^{1/n}\right)
\end{equation}
holds for all $\bs{q}\in\mb{Z}^{n}\setminus\{\bs{0}\}$.
\end{definition}
\noindent

The assumption in (\ref{eq:condition1}) is in fact quite natural: it may be regarded as a weaker version of admissibility for the lattice $\Lambda_{\bs L}$ with respect to regions of the form
$$\{(x_1,\dotsc,x_{m+n})\in\mb R^{m+n}:|x_1\dotsm x_{m+n}|\leq \phi(|x_1\dotsm x_{m+n}|)\}$$
(see, e.g., \cite[Section III.5.2]{Cas97}). The main difference with the notion of admissibility is as follows: in an admissible lattice, the product of \emph{all} the components of  any non-null lattice vector is required to be bounded below, while here, we allow the last $n$ components of any such vector to attain the value $0$ (note the exponent $+$ in (\ref{eq:condition1})). Admissibility is a typical property of lattices induced by number fields \cite{Skr90} and is known to hold for almost-every unimodular lattice with respect to the Haar measure on the moduli space (provided the function $\phi$ decays sufficiently fast) \cite[Theorem]{KM99}. A weaker version of admissibility was also considered in \cite{Wid18} and \cite{Fre21-2}, once again in the context of Diophantine approximation.

With Definition \ref{def:multbad} at hand, we are now in the position to state our main result. 


\begin{theorem}
\label{prop:cor1}
Let $\bs{L}\in\mb{R}^{m\times n}$ be a $\phi$-multiplicatively badly approximable matrix and suppose that $R^{m}/\varepsilon\geq e^{m}$, where $e=2.71828\dots$ is the base of the natural logarithm. Then for all $\bs T\in[1,+\infty)^{n}$ we have that
\begin{equation}
\label{eq:cor1}
\# M(\bs{L},\varepsilon,R,\bs{T})
\ll_{m,n}(1+R)^{m+n-1}\log\left(\frac{R^{m}}{\varepsilon}\right)^{m-1}\left[\varepsilon \bar T^{n}+\left(\frac{\varepsilon\bar T^{n}}{\phi\left(\bar T\right)}\right)^{\frac{m+n-1}{m+n}}\right],
\end{equation}
where $\bar T:=\left(T_{1}\dotsm T_{n}\right)^{1/n}$.
\end{theorem}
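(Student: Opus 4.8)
The plan is to count lattice points in the region \eqref{eq:vol} by slicing along the last $n$ coordinates and, within each slice, reducing the multiplicative constraint to a union of axis-parallel boxes whose contributions are controlled by the $\phi$-multiplicative-badly-approximable hypothesis. First I would dispose of the trivial ``no multiplicative gain'' regime: since $\prod_i|L_i\bs q+p_i|<\varepsilon$ and each factor is at most $R$, a dyadic decomposition of the admissible box $[-R,R]^m$ into sub-boxes of side lengths $(2^{-k_1}R,\dots,2^{-k_m}R)$ with $\sum k_i\gtrsim\log(R^m/\varepsilon)$ shows that the multiplicative region is covered by $\ll_m \log(R^m/\varepsilon)^{m-1}$ boxes of volume $\ll \varepsilon$, each with one long side $\asymp R$ in some coordinate (this is the standard ``hyperbola spike'' decomposition and explains the $\log(R^m/\varepsilon)^{m-1}$ factor and the need for $R^m/\varepsilon\ge e^m$). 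So it suffices to bound, uniformly over such a box $B$, the number of points of $\Lambda_{\bs L}$ in $B\times\prod_j[-T_j,T_j]$ with nonzero $\bs q$-part.

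Next, for a fixed box $B=\prod_i[a_i,a_i+\ell_i]$ with $\prod_i\ell_i\ll\varepsilon$ and each $\ell_i\le R$, and for a fixed $\bs q$ with $0<|q_j|\le T_j$, the number of $\bs p\in\mb Z^m$ with $L_i\bs q+p_i\in[a_i,a_i+\ell_i]$ is $\prod_i(\ell_i+O(1))$, but crucially it is zero unless $\|L_i\bs q\|$ lies in the projection of $[a_i,a_i+\ell_i]$ mod $1$; combined with \eqref{eq:condition1} this forces $\prod_i\|L_i\bs q\|\le \prod_i(\ell_i+\text{something})$, hence $q_1^+\cdots q_n^+\gtrsim \phi(\bar T)/(\text{that product})$ when the product is small. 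The strategy is then to split according to whether $\bs q$ contributes a ``full'' box of $\bs p$'s (all $\ell_i\ge 1$, i.e.\ essentially $\prod\ell_i\asymp\varepsilon$ with no coordinate short) or a ``thin'' one. For the full case the count per $\bs q$ is $\ll\varepsilon$ and the number of admissible $\bs q$ is $\ll \bar T^{\,n}$, giving the $\varepsilon\bar T^{\,n}$ term. For the thin case I would count the $\bs q$ with $\prod_i\|L_i\bs q\|$ small: the $\phi$-condition bounds this count by the number of $\bs q\in\prod_j[-T_j,T_j]$ with $q_1^+\cdots q_n^+\cdot(\text{product of }\|L_i\bs q\|)\ge\phi(\bar T)$, and a counting-in-boxes / pigeonhole argument (partitioning the $\bs q$-box and the product range dyadically, using that in each sub-box the number of $\bs q$ realizing a given small value of $\prod\|L_i\bs q\|$ is controlled) yields the optimized exponent $(m+n-1)/(m+n)$ on $\varepsilon\bar T^{\,n}/\phi(\bar T)$. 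The $(1+R)^{m+n-1}$ factor comes from the $O(1)$ slack in ``$\ell_i+O(1)$'' when some $\ell_i\ge 1$, propagated through all $m+n-1$ ``free'' directions of the spike boxes.

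The main obstacle, and where the real work lies, is the thin case: one must show that the number of $\bs q\in\prod_j[-T_j,T_j]\setminus\{\bs 0\}$ for which $\prod_{i=1}^m\|L_i\bs q\|\le\delta$ is $\ll_{m,n}(1+R)^{n-1}\delta^{(m+n-1)/(m+n)}\bar T^{\,n(m-1)/(m+n)}\phi(\bar T)^{-(m+n-1)/(m+n)}$ (schematically), and to do so uniformly in the shape of the thin box. I would handle this by yet another dyadic decomposition — over the sizes of the individual $\|L_i\bs q\|$ and over dyadic blocks of $\bs q$-magnitude — reducing to: given $Q_1,\dots,Q_n$ and $\delta_1,\dots,\delta_m$ with $\prod Q_j\le\bar T^{\,n}$ and $\prod\delta_i\le\delta$, bound $\#\{\bs q:\ |q_j|\asymp Q_j,\ \|L_i\bs q\|\asymp\delta_i\}$. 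Here the $\phi$-hypothesis says this set is empty unless $\prod Q_j\prod\delta_i\gtrsim\phi(\bar T)$, and otherwise a direct volume/packing bound gives $\ll \prod Q_j\prod(\delta_i+1/Q)$-type estimates; balancing the constraint $\prod Q_j\prod\delta_i\ge\phi(\bar T)$ against the target volume by Lagrange-multiplier / AM–GM optimization produces exactly the exponent $(m+n-1)/(m+n)$. Summing the $O(\log)$ dyadic scales back against the spike decomposition, and checking that the $\varepsilon\bar T^{\,n}$ term dominates when $\phi(\bar T)\ge\varepsilon\bar T^{\,n}$ (so the bracket is genuinely a sum, not a max with a gap), completes the argument.
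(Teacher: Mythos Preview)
Your spike decomposition of the hyperbolic region $\{\prod|x_i|<\varepsilon,\ |x_i|\le R\}$ into $\ll\log(R^m/\varepsilon)^{m-1}$ boxes of volume $\asymp\varepsilon$ is correct and matches Proposition~\ref{thm:partition} in the paper. The split into a ``full'' contribution $\varepsilon\bar T^n$ and a ``thin'' error is also the right architecture. The gap is in the thin case, and it is the entire content of the theorem.

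You write that after localising to $|q_j|\asymp Q_j$, $\|L_i\bs q\|\asymp\delta_i$, a ``direct volume/packing bound gives $\ll\prod Q_j\prod(\delta_i+1/Q)$-type estimates'', and that balancing against the constraint $\prod Q_j\prod\delta_i\ge\phi(\bar T)$ via AM--GM yields the exponent $(m+n-1)/(m+n)$. But the set you are counting is precisely $\Lambda_{\bs L}$ intersected with a box of volume $\asymp\prod Q_j\prod\delta_i$, and there is no packing bound of the form ``count $\ll$ volume'' for a unimodular lattice in a thin box: the count is governed by the successive minima of $\Lambda_{\bs L}$ \emph{after rescaling the box to a cube}, and those minima can be arbitrarily small in some directions. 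The $\phi$-hypothesis tells you only that the set is empty when the volume drops below $\phi(\bar T)$; it gives you nothing when the volume sits just above $\phi(\bar T)$, which is exactly the regime producing the error term. Your proposed bound $\prod Q_j\prod(\delta_i+1/Q)$ is undefined (what is $Q$?) and, for any reasonable interpretation, either reduces to the trivial volume bound or smuggles in an unproved estimate on the minima.

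What the paper actually does is: apply a diagonal $\textup{SL}_{m+n}$-map sending each spike$\,\times\prod[-T_j,T_j]$ to a cube of side $(\varepsilon\bar T^n)^{1/(m+n)}$, then invoke a Davenport-type bound (Theorem~\ref{thm:latticeest}) to reduce to estimating $(\varepsilon\bar T^n)^{s/(m+n)}/(\delta_1\cdots\delta_s)$ for the successive minima $\delta_s$ of the transformed lattice. The $\phi$-condition is then fed into a \emph{weighted} AM--GM inequality applied to a specially constructed basis (Lemmas~\ref{lem:monotonicity}, \ref{lem:monotonicity2}) whose vectors have a triangular zero-pattern; the weights are determined by a recursion (equation~\eqref{eq:fillingup}) designed so that the factors $T/T_j$ cancel and only $\bar T$ survives. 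This is where the exponent $(m+n-1)/(m+n)$ genuinely appears (Lemma~\ref{lem:exponentswelldef}), and it does not fall out of a one-line optimisation: a naive AM--GM on the first minimum alone gives exponent $\sigma/(m+h_1)$ (Lemma~\ref{lem:sleqM}), which is too large when the shortest vector has many zero $q$-coordinates. Your proposal contains no mechanism to handle these zero coordinates, and without it the argument does not close.
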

\noindent Here and hereafter, we use the notation $x\ll_z y$ (resp. $x\gg_z y$) to indicate that there exists a constant $c$, depending on the real number $z$, such that $x\leq c\cdot y$ (resp. $x\geq c\cdot y$). The symbol $\asymp$ is used to mean that both these conditions apply.

We remark that the most important feature of Theorem \ref{prop:cor1} is the fact that the upper bound in (\ref{eq:cor1}) depends solely on the geometric mean of the components $T_1,\dotsc, T_n$ of the vector $\bs T$. Indeed, the analogous result with $\bar T$ replaced by $\max_i |T_i|$ would be trivial to prove and is not relevant to applications. 

\subsection{Sums of Reciprocals of Fractional Parts}
Recall that for any $x\in \mb R$, the symbol $\|x\|$ stands for the distance from $x$ to the nearest integer and that $x^{+}=\max\{|x|,1\}$. Fix $n\geq 1$, let $\bs \alpha\in\mb R^{n}$ be a vector such that the numbers $1,\alpha_1,\dotsc,\alpha_n$ are linearly independent over $\mb Z$, and consider the functions
\begin{equation}
\label{eq:s}
S(\bs\alpha, \bs T):=\sum_{\substack{0\leq q_i\leq T_i \\ i=1,\dotsc,n}}\frac{1}{q_1^{+}\dotsm q_n^{+}\|\bs q\cdot\bs\alpha\|}
\end{equation}
 for $\bs T\in[1,+\infty)^n$, and
\begin{equation}
\label{eq:s*}
S^{*}(\bs\alpha, T):=\sum_{0< q\leq T}\frac{1}{q\|q\alpha_1\|\dotsm\|q\alpha_n\|}
\end{equation}
for $T\geq 1$. Here, the vector $\bs q=(q_1,\dotsc,q_n)$ and the number $q$ are assumed to be non-null integers, and the notation $\bs x\cdot\bs y$ stands for the usual dot-product in $\mb R^n$.

Due (not exclusively) to their intimate connection to the theory of uniform distribution, the functions $S(\bs\alpha,\bs T)$ and $S^{*}(\bs\alpha,T)$ have been the object of extensive investigation (see \cite{BHV20} for a complete account). Motivated by this, our main concern in the sequel of this paper will be the following:

\begin{prob}
\label{prob:1}
Determine the growth rate of the sums (\ref{eq:s}) and (\ref{eq:s*}) for a Lebesgue-generic vector $\bs \alpha\in\mb R^n$.    
\end{prob}
\noindent Let us give a brief account of the history of Problem \ref{prob:1}.

The case $n=1$ is relatively well-understood. The function $S(\alpha,T)=S^{*}(\alpha,T)$ was first studied by Hardy and Littlewood in the case when $\alpha$ is a badly approximable number \cite[Memoir II]{HL22}. Problem \ref{prob:1} itself was then addressed by a number of authors \cite{Wal31}, \cite{Sch64}, and finally settled in the case $n=1$ by Kruse \cite[Theorem 6(b)]{Kru64}. Kruse proved that for almost every $\alpha\in\mb R$ one has that
\begin{equation}
\label{eq:kru}
S_{\alpha}(T)\asymp_{\alpha}(\log T)^{2},
\end{equation}
i.e., that for a generic number $\alpha$ upper and lower bounds coincide. Even more precise results involving the partial qutients of the number $\alpha$ (see \cite{BL17}) were established thereafter. We direct the reader to the extensive treatise of Beresnevich, Haynes, and Velani \cite{BHV20} for more details on this.

For $n>1$ the picture is significantly different. In particular, attempts to prove lower bounds as opposed to upper bounds for the sums (\ref{eq:s}) and (\ref{eq:s*}) have been far more successful. Let us recall the main milestones. General lower bounds for the sum $S(\bs\alpha,\bs T)$ were obtained by Beresnevich, Haynes, and Velani, via the Minkowski Convex-Body Theorem \cite{BHV20} (see Remark 1.4 and Theorem 1.10). Their result shows that for any vector $\bs\alpha\in\mb R^n$ and all $\bs T\in[1,+\infty)^n$ it holds that
\begin{equation}
\label{eq:BHVlower}
S(\bs\alpha,\bs T)\gg_{n} \log\bar T\log T_1\dotsm\log T_n,
\end{equation}
where $\bar T$ stands for the geometric mean of the components $T_1,\dotsc,T_n$ of $\bs T$ and $\log x$ denotes the function $\log\max\{ x,e\}$. Lower bounds for the function $S^{*}(\bs \alpha,T)$ are not explicitly present in the literature, but may easily be deduced from the work of L\^e and Vaaler \cite{LV15}, who studied a modification of the sums in (\ref{eq:s}) and (\ref{eq:s*}), where the factors $q_1^+\dotsm q_{n}^+$ and $q$ at the denominator are removed.
Namely, employing techniques from harmonic analysis, L\^e and Vaaler \cite[Theorem 1.1]{LV15} show that for all $\bs\alpha\in\mb R^n$ and all $T\geq 1$ one has that
\begin{equation}
\label{eq:LV}
\sum_{q=1}^{T}\frac{1}{\|q\alpha_1\|\dotsm\|q\alpha_n\|}\gg_{n}T(\log T)^{n+1}.    
\end{equation}
Note that (\ref{eq:LV}), along with the the Abel Summation Formula
$$\sum_{n=1}^{N}a_n\cdot b_n=\sum_{n=1}^{N}(a_1+\dotsb +a_n)(b_{n+1}-b_{n})+(a_1+\dotsb +a_{N})b_{N+1},$$
for sequences of real numbers $\{a_n\},\{b_n\}$, easily yields
\begin{equation}
\label{:BHV*lower}
S^{*}(\bs\alpha,T)\gg_{n}(\log T)^{n+1},
\end{equation}
\noindent giving a lower bound akin to (\ref{eq:BHVlower}) for the function $S^{*}(\bs\alpha,T)$.

In stark contrast to the above results, sharp upper bounds for the sums $S(\bs\alpha,\bs T)$ and $S^{*}(\bs\alpha,T)$ and generic $\bs \alpha\in\mb R^n$ ($n>1$) are not presently available. Schmidt \cite[Theorem 2]{Sch64} proved that, in the symmetric case (i.e., when $T_1=\dotsb =T_n=T$), the inequality
$$S(\bs\alpha,\bs T)\ll_{n,\bs \alpha} (\log T)^{n+1+\varepsilon}$$
holds for almost every $\bs \alpha\in\mb R^n$. He additionally derived an extension of this result to arbitrary sequences of integers, which he then applied to give bounds for the discrepancy of multi-dimensional Kronecker sequences \cite[Theorem 3]{Sch64}. A second partial result towards (\ref{eq:BHVmainbody}) is due to Beck, once again in the context of uniform distribution theory \cite[Lemma 4.1]{Bec94}. Beck showed that, in the symmetric case, for almost every $\bs\alpha\in\mb R^n$ and all $T\geq 1$ it holds that
$$\sum_{\substack{0\leq q_i\leq T \\ i=1,\dotsc,n \\ q_1^+\dotsm q_n^+\|\bs\alpha\cdot\bs q\|\leq (\log T)^{20n}}}\frac{1}{q_1^{+}\dotsm q_n^{+}\|\bs q\cdot\bs\alpha\|}\ll_{n,\bs\alpha}\varphi(\log\log T)(\log T)^{n},$$
where $\varphi:\mb N\to\mb N$ is any non-decreasing function such that $\sum_{n}\varphi(n)^{-1}<+\infty.$
In the same spirit, estimates for the sums $S(\bs\alpha,\bs T)$ were used in \cite{HL12} to bound the discrepancy function for Halton-Kronecker sequences. All these results however, do not seem to easily extend to the non-symmetric case.

 Based on (\ref{eq:kru}) and (\ref{eq:BHVlower}), Beresnevich, Haynes, and Velani \cite[Conjecture 1.1]{BHV20} conjectured that for almost every $\bs\alpha\in\mb R^n$ and all $\bs T\in[1,+\infty)^n$ it holds that
\begin{equation}
\label{eq:BHVmainbody}
 S(\bs\alpha,\bs T)\ll_{\bs{\alpha},n}\log \bar T\log T_{1}\dotsm \log T_{n}.   
\end{equation}
By duality, we might also expect that for almost every $\bs\alpha\in\mb R^n$ and all $T\geq 1$ the analogous inequality
\begin{equation}
\label{eq:dualBHVmainbody}
S^{*}(\bs\alpha,T)\ll_{\bs{\alpha},n}(\log T)^{n+1}
\end{equation}
holds true.

Our first application of Theorem \ref{prop:cor1} is an unconditional proof of (\ref{eq:BHVmainbody}) and (\ref{eq:dualBHVmainbody}), which allows us to fully settle Problem \ref{prob:1}.

\begin{theorem}
\label{thm:BHV}
For almost every $\bs{\alpha}=(\alpha_{1},\dotsc,\alpha_{n})\in\mb{R}^{n}$ and all $\bs T\in[1,+\infty)^{n}$ it holds that
 \begin{equation}
 \label{eq:BHV}
S(\bs\alpha,\bs T)\asymp_{\bs{\alpha},n}\log \bar T\log T_{1}\dotsm \log T_{n},   
 \end{equation}
where $\bar{T}$ denotes the product $(T_{1}\dotsm T_{n})^{1/n}$. Analogously, for almost every $\bs \alpha\in\mb R^n$ and all $T\geq 1$ it holds that 
\begin{equation}
\label{eq:BHV*}
 S^{*}(\bs\alpha,T)\asymp_{\bs{\alpha},n}(\log T)^{n+1}.   
\end{equation}
\end{theorem}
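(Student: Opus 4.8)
The plan is as follows. The lower bounds in \eqref{eq:BHV} and \eqref{eq:BHV*} are already available and need not be reproved: \eqref{eq:BHVlower} gives $S(\bs\alpha,\bs T)\gg_{n}\log\bar T\log T_1\dotsm\log T_n$ for \emph{every} $\bs\alpha$, while \eqref{eq:LV} together with the Abel summation formula gives $S^{*}(\bs\alpha,T)\gg_{n}(\log T)^{n+1}$ whenever $1,\alpha_1,\dotsc,\alpha_n$ are linearly independent over $\mb Q$. So only the matching upper bounds have to be established, and since the two are proved in parallel I describe the argument for $S$, indicating the changes for $S^{*}$ along the way.

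\textbf{Step 1 (reduction to $\phi$-multiplicative bad approximability).} Fix $\delta>0$ and let $\phi(x)=\min\{1,(\log\max\{x,e\})^{-n}(\log\log\max\{x,e^{e}\})^{-1-\delta}\}$. A Borel--Cantelli argument shows that for a.e.\ $\bs\alpha\in\mb R^{n}$ there is $c_{\bs\alpha}>0$ with: $\bs\alpha$ (as a $1\times n$ matrix) is $c_{\bs\alpha}\phi$-multiplicatively-badly-approximable in the sense of Definition~\ref{def:multbad}, and likewise $\bs\alpha^{T}$ (as an $n\times 1$ matrix). The input is that for fixed $\bs q\in\mb Z^{n}\setminus\{\bs 0\}$ one has $|\{\bs\alpha\in[0,1]^{n}:\|\bs q\cdot\bs\alpha\|<\eta\}|\asymp\eta$, so the $\bs\alpha$ violating \eqref{eq:condition1} for a given $\bs q$ have measure $\ll\phi((q_1^{+}\dotsm q_n^{+})^{1/n})/(q_1^{+}\dotsm q_n^{+})$; grouping the $\bs q$ by the value of $q_1^{+}\dotsm q_n^{+}$ turns the total into a series comparable to $\sum_{t\geq 1}t^{\,n-1}\phi(e^{t/n})$, which converges by the choice of $\phi$. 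For $\bs\alpha^{T}$ one uses $|\{\bs\alpha:\prod_i\|q\alpha_i\|<\eta\}|\asymp\eta(\log(1/\eta))^{n-1}$ in the same way.

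\textbf{Step 2 (dyadic reduction and application of Theorem~\ref{prop:cor1}).} Discard the $O_{\bs\alpha}(1)$ contribution of the finitely many $\bs q$ exceptional in Step~1, as well as the terms of \eqref{eq:s} with some $q_i=0$ (which reduce, after relabelling, to the same sum in fewer variables and are absorbed by induction on $n$); thus assume all $q_i\geq 1$. Decomposing each $q_i$ and $\|\bs q\cdot\bs\alpha\|$ into dyadic blocks, and handling the boundary range $\|\bs q\cdot\bs\alpha\|\geq 1/(2e)$ trivially, writes $S(\bs\alpha,\bs T)$ up to a constant as a sum, over tuples $\bs j\in\mb Z_{\geq 0}^{n}$ with $2^{j_i}\leq 2T_i$ and integers $k\geq 1$, of $2^{\,k-\sigma}N(\bs j,k)$, where $\sigma:=j_1+\dotsb+j_n$ and $N(\bs j,k)$ counts the $\bs q$ with $q_i\in[2^{j_i},2^{j_i+1})$ and $\|\bs q\cdot\bs\alpha\|\in[2^{-k-1},2^{-k})$. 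Now $N(\bs j,k)\leq 2^{n}\#M(\bs\alpha,2^{-k},1/2,(2^{j_1},\dotsc,2^{j_n}))$, so Theorem~\ref{prop:cor1} with $m=1$ applies; since the factor $\log(\cdot)^{m-1}$ is then trivial, one obtains $N(\bs j,k)\ll_{\bs\alpha,n}2^{-k}2^{\sigma}+(2^{-k}2^{\sigma}/\phi(2^{\sigma/n}))^{n/(n+1)}$. (For $S^{*}$ one decomposes only in $q$ and invokes Theorem~\ref{prop:cor1} with the matrix $\bs\alpha^{T}$, i.e.\ with $m=n$ and a single $q$-variable, so that the factor $\log(1/\varepsilon)^{\,n-1}$ is retained.)

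\textbf{Step 3 (summation — the crux).} Carrying the linear term $2^{-k}2^{\sigma}$ through the two geometric sums over $k$ — whose length is $\asymp\sigma$, because $c_{\bs\alpha}\phi$-multiplicative bad approximability forces $2^{-k}\gg_{\bs\alpha}\phi(2^{\sigma/n})2^{-\sigma}$ — and then over $\bs j$, produces exactly $\ll_{n}\log\bar T\log T_1\dotsm\log T_n$; here it is essential that Theorem~\ref{prop:cor1} involves the \emph{geometric} mean $\bar T$, so no $\max_i T_i$ enters. The difficulty is the fractional-power term: summed naively against the weight $2^{\,k-\sigma}$ all the way down to the permitted edge $2^{-k}\asymp\phi(2^{\sigma/n})2^{-\sigma}$ it would contribute $\asymp2^{\sigma}/\phi(2^{\sigma/n})$ per dyadic box, and summing over boxes loses further powers of $\log\bar T$. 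The resolution — the point at which probability enters — is that this term is of critical size only for the \emph{near-extremal} $\bs q$ (those with $q_1^{+}\dotsm q_n^{+}\|\bs q\cdot\bs\alpha\|$ comparable to $\phi((q_1^{+}\dotsm q_n^{+})^{1/n})$), and more broadly that the worst-case bound of Theorem~\ref{prop:cor1} massively overestimates $N(\bs j,k)$ in the range where $\|\bs q\cdot\bs\alpha\|$ is not much smaller than its generic size $\asymp 2^{-\sigma}$. One therefore proves, for a.e.\ $\bs\alpha$, that the number of $\bs q$ in a dyadic box with $\|\bs q\cdot\bs\alpha\|<u$ is $\ll_{\bs\alpha,n}$ the maximum of $1$ and the \emph{expected} count $u\,2^{\sigma}(\log(1/u))^{\,n-1}$, \emph{except} for a sequence of $\bs q$ sparse enough (on average one per bounded run of dyadic scales) that its total contribution to $S$ is $\ll_{\bs\alpha,n}\log\bar T\log T_1\dotsm\log T_n$. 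Such an estimate is of second-moment / quantitative-Borel--Cantelli type and upgrades the qualitative input of Step~1. Substituting this refined count for $N(\bs j,k)$ above the generic size, using Theorem~\ref{prop:cor1} below it, and re-doing the two summations yields $S(\bs\alpha,\bs T)\ll_{\bs\alpha,n}\log\bar T\log T_1\dotsm\log T_n$; the argument for $S^{*}$ is identical with $n$ linear forms in one variable in place of one linear form in $n$ variables. The main obstacle is precisely this Step~3: Theorem~\ref{prop:cor1} used alone, through its worst-case bound, is not sharp enough near the $\phi$-multiplicative edge, and one must interleave it with the probabilistic control of the near-extremal vectors — while ensuring that the slowly growing factors produced by the probabilistic bound stay below a genuine power of $\log\bar T$.
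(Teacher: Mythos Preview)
Your overall architecture matches the paper's: the lower bounds are taken from \eqref{eq:BHVlower} and \eqref{eq:LV} plus Abel summation; for the upper bounds, almost every $\bs\alpha$ (and $\bs\alpha^t$) is $\phi$-multiplicatively badly approximable by Borel--Cantelli, and Theorem~\ref{prop:cor1} is applied after a dyadic decomposition. You also correctly locate the obstruction: the error term $(2^{-k}2^{\sigma}/\phi)^{n/(n+1)}$, summed against the weight $2^{k-\sigma}$ down to the $\phi$-edge, gives $\asymp 1/\phi(2^{\sigma/n})$ per box (not $2^{\sigma}/\phi$ as you wrote, though the conclusion is the same), which after summing over boxes overshoots by powers of $\log\bar T$.

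The gap is your Step~3. You assert an almost-sure pointwise bound on the individual counts $N(\bs j,k)$ ``of second-moment / quantitative Borel--Cantelli type'', but give no mechanism; a bound uniform over all $\prod_i\log T_i$ boxes and $\asymp\log\bar T$ scales $k$ would itself cost polylog factors through any union-type argument --- exactly the losses you need to avoid. The paper's resolution is different and more explicit. It splits $S$ into three pieces according to whether $q_1^{+}\dotsm q_n^{+}\|\bs q\cdot\bs\alpha\|$ lies below $(\log(T_1\dotsm T_n))^{-n-\varepsilon_0}$, between this and $(\log(T_1\dotsm T_n))^{C}$, or above $(\log(T_1\dotsm T_n))^{C}$. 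The first piece is $O_{\bs\alpha}(1)$ by Borel--Cantelli; the third is handled by Theorem~\ref{prop:cor1} as in your Step~2, the cutoff with $C\gg_n 1$ rendering the error-term sum harmless. The organisational insight you are missing is that the \emph{middle} range spans only $O(\log\log\bar T)$ values of $k$, so a method that loses powers of $\log\log\bar T$ is acceptable there. The paper supplies this via a \emph{first-moment} Schmidt--Cassels argument: a direct computation gives $\int_{[0,1]^n}\bigl|\sum_{\beta\in\tilde J}\sum_k e^k N_{\beta,k}\bigr|\,d\bs\alpha\ll(\log\log\bar T)\,\#\tilde J$ over the middle range, and a dyadic-covering Borel--Cantelli upgrade (Proposition~\ref{prop:Schmidt}) turns this into the a.e.\ bound $(\log\log\bar T)^{3+\eta}\prod_i\log T_i$, which is absorbed by the main term. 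No second moments enter. (Incidentally, for the $S$ case the expected count in a dyadic box is $u\,2^{\sigma}$; the $(\log(1/u))^{n-1}$ factor you wrote belongs to the $S^{*}$ side.)
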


\medskip

As a second application of Theorem \ref{prop:cor1}, we are able to prove upper bounds for non-averaged sums, such as (\ref{eq:LV}). This complements lower bounds of L\^e and Vaaler \cite[Corollary 1.2]{LV15} and significantly strengthens their upper bounds \cite[Theorem 2.1]{LV15}, by allowing $\phi$ in (\ref{eq:condition1}) to be a generic non-increasing function rather than a fixed constant. Define
$$\Sigma(\bs L,\bs T):=\sum_{\substack{0\leq q_i\leq T_i \\ i=1,\dotsc,n}}\frac{1}{\|\bs q\cdot L_1\|\dotsm \|\bs q\cdot L_m\|}$$
for $\bs L\in\mb R^{m\times n}$. The following result then holds.

\begin{theorem}
\label{thm:LV}
Let $\phi:[0,+\infty)\to (0,1]$ be a non-increasing real function and let $\bs{L}\in\mb{R}^{m\times n}$ be a $\phi$-multiplicatively badly approximable matrix. Then for all $\bs{T}\in[1,+\infty)^{n}$ it holds that
\begin{equation}
\Sigma(\bs L,\bs T)\ll_{m,n}\bar T^{n}\log\left(\frac{\bar T}{\phi(\bar T)}\right)^{m}+\frac{\bar T^{n}}{\phi(\bar T)}\log\left(\frac{\bar T}{\phi(\bar T)}\right)^{m-1},\nonumber
\end{equation}
provided that $\bar T:=\left(T_{1}\dotsm T_{n}\right)^{1/n}\geq 2$.
\end{theorem}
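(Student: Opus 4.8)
The plan is to split the sum $\Sigma(\bs L,\bs T)$ into dyadic pieces according to the size of the product $P(\bs q):=\prod_{i=1}^{m}\|\bs q\cdot L_{i}\|$ and to estimate each piece by means of Theorem \ref{prop:cor1}. For an integer $k\ge 0$ set
$$A_{k}:=\Big\{\bs q\in\mb Z^{n}\setminus\{\bs 0\}:0\le q_{i}\le T_{i}\ \ (i=1,\dotsc,n),\ \ 2^{-k-1}\le P(\bs q)<2^{-k}\Big\}.$$
Since $\|x\|\le 1/2$ for every $x\in\mb R$ and $P(\bs q)>0$ for $\bs q\ne\bs 0$ by \eqref{eq:condition1}, every $\bs q$ contributing to $\Sigma(\bs L,\bs T)$ lies in exactly one $A_{k}$, and on $A_{k}$ the summand is at most $2^{k+1}$. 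Hence $\Sigma(\bs L,\bs T)\le\sum_{k\ge 0}2^{k+1}\,\#A_{k}$, and it remains to bound $\#A_{k}$.

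Two observations delimit the sum over $k$. First, sending $\bs q$ to the pair $(\bs p(\bs q),\bs q)$, where $p_{i}(\bs q)$ is the negative of a nearest integer to $\bs q\cdot L_{i}$, defines an injection of $\{\bs q:0\le q_{i}\le T_{i},\ \bs q\ne\bs 0,\ P(\bs q)<2^{-k}\}$ into $M(\bs L,2^{-k},1/2,\bs T)$: indeed $|\bs q\cdot L_{i}+p_{i}(\bs q)|=\|\bs q\cdot L_{i}\|\le 1/2$ and $\prod_{i}|\bs q\cdot L_{i}+p_{i}(\bs q)|=P(\bs q)<2^{-k}$. Thus $\#A_{k}\le\#M(\bs L,2^{-k},1/2,\bs T)$. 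Second, the constraint $0\le q_{i}\le T_{i}$ forces $q_{i}^{+}\le T_{i}$, hence $q_{1}^{+}\dotsm q_{n}^{+}\le\bar T^{n}$, so \eqref{eq:condition1} and the monotonicity of $\phi$ give $P(\bs q)\ge\phi(\bar T)/\bar T^{n}$ for every such $\bs q$. Consequently $A_{k}=\emptyset$ as soon as $2^{k}\ge\bar T^{n}/\phi(\bar T)$, so only the values $k\le K:=\lceil\log_{2}(\bar T^{n}/\phi(\bar T))\rceil$ matter; note that $\bar T\ge 2$ and $\phi\le 1$ give $\bar T^{n}/\phi(\bar T)\ge 2$, that $K\ll_{n}\log(\bar T/\phi(\bar T))$ because $n\ge 1$, and that $\log(\bar T/\phi(\bar T))$ is bounded below by a positive constant.

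For the $O_{m}(1)$ indices $k\le C(m):=\lceil 3m\rceil$ I would use only the trivial bound $\#A_{k}\le\prod_{i}(\lfloor T_{i}\rfloor+1)\le 2^{n}\bar T^{n}$ (valid since each $T_{i}\ge 1$), contributing $\ll_{m,n}\bar T^{n}$ to $\Sigma(\bs L,\bs T)$ in total. For $C(m)<k\le K$ the hypothesis $R^{m}/\varepsilon\ge e^{m}$ of Theorem \ref{prop:cor1} holds with $R=1/2$ and $\varepsilon=2^{-k}$ (as $2^{k-m}\ge 2^{2m+1}\ge e^{m}$), and applying that theorem, absorbing the bounded factor $(1+1/2)^{m+n-1}$ and using $\log(2^{k-m})^{m-1}\ll_{m}k^{m-1}$, gives
\begin{equation*}
2^{k+1}\#A_{k}\ \ll_{m,n}\ k^{m-1}\left[\bar T^{n}+2^{k/(m+n)}\left(\frac{\bar T^{n}}{\phi(\bar T)}\right)^{\frac{m+n-1}{m+n}}\right].
\end{equation*}
Summing this over $C(m)<k\le K$, and using $\sum_{k\le K}k^{m-1}\ll_{m}K^{m}$ together with $\sum_{k\le K}k^{m-1}2^{k/(m+n)}\ll_{m,n}K^{m-1}2^{K/(m+n)}$ (a geometric-type series dominated by its last term) and $2^{K}\asymp\bar T^{n}/\phi(\bar T)$, the first bracketed term yields $\ll_{m,n}\bar T^{n}\log(\bar T/\phi(\bar T))^{m}$ and the second yields $\ll_{m,n}\frac{\bar T^{n}}{\phi(\bar T)}\log(\bar T/\phi(\bar T))^{m-1}$. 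Adding the $k\le C(m)$ contribution, absorbed by the first term since $\log(\bar T/\phi(\bar T))\gg 1$, produces exactly the asserted bound.

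The only real work beyond invoking Theorem \ref{prop:cor1} is bookkeeping: treating the $O_{m}(1)$ small-$k$ range separately so that the hypothesis $R^{m}/\varepsilon\ge e^{m}$ holds on the remaining range, checking that $\sum_{k}k^{m-1}2^{k/(m+n)}$ is controlled by its final term, and noting that $\log(\bar T^{n}/\phi(\bar T))$ and $\log(\bar T/\phi(\bar T))$ agree up to a factor depending only on $n$. No essentially new difficulty arises.
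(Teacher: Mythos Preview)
Your proof is correct and follows essentially the same route as the paper: dyadic decomposition in the product $\prod_{i}\|L_{i}\bs q\|$, truncation of the range of $k$ via the $\phi$-multiplicative badly approximable condition (the paper phrases this as Lemma \ref{lem:emptycase}), a trivial estimate for the $O_{m}(1)$ small values of $k$, and an application of Theorem \ref{prop:cor1} with $R=1/2$ for the remaining range, followed by the elementary summations $\sum_{k\le K}k^{m-1}\ll_{m}K^{m}$ and $\sum_{k\le K}k^{m-1}2^{k/(m+n)}\ll_{m,n}K^{m-1}2^{K/(m+n)}$. The only differences are cosmetic (your cutoff $C(m)=\lceil 3m\rceil$ versus the paper's $\lfloor m(1+1/\log 2)\rfloor$, and your explicit injection into $M(\bs L,2^{-k},1/2,\bs T)$ in place of the paper's reference to \eqref{eq:intersection}).
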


The proof of Theorem \ref{thm:LV} will be the object of Appendix \ref{sec:proofofthmLV}. We conclude our introduction by remarking that, in the non-averaged setting (i.e., without the factors $q_1,\dots,q_n$ in the denominator), an analogue of Theorem \ref{thm:BHV} seems to be far-fetched. In fact, based on current evidence, the behaviour of the functions $\Sigma(\bs L,\bs T)$ should be much more erratic. The reader is directed to the discussion in \cite[Section 1.3.2]{CT19} for more details on this. 

\subsection{Methods}

The main idea behind the proof of Theorem \ref{prop:cor1} is to tessellate the counting domain (e.g., (\ref{eq:vol})) into volume-$1$ tiles by using diagonal maps. Each tile can then be moved into a cube centred (essentially) at the origin, through the action of the corresponding map. The counting estimate follows from studying the successive minima of all the images of the lattice $\Lambda_{\bs\alpha^{t}}$ under these diagonal maps. Since the maps in question are diagonal, volume-preserving, and (\ref{eq:condition1}) holds, we may derive an estimate on the minima in terms of the function $\phi$, by simply applying the arithmetic-geometric mean inequality. This will have to be done carefully enough to account for the \emph{null} components of any vector realising a successive minimum. The main novelty of this paper lies precisely in this argument, which was inspired by previous work of Widmer \cite{Wid17}. Details may be found in Sections \ref{sec:proofM} and \ref{sec:proofM1}.

\medskip

The proof of Theorem \ref{thm:BHV} relies on two separate arguments. One is based on techniques from the geometry of numbers (i.e., Theorem \ref{prop:cor1}), while the other is more analytic in nature and is inspired by the work of Schmidt and Cassels.

We briefly illustrate the main features of both arguments for the case of the function $S(\bs\alpha,\bs T)$. To start, for $0<a<b$ we introduce the counting function
$$N(\bs{\alpha},\bs T,a,b):=\#\left\{\bs q\in[0,T_{1}]\times\dotsm\times[0,T_{n}]\cap\mb{Z}^{n}:a<q_{1}\dotsm q_{n}\|\bs q\cdot\bs\alpha\|\leq b\right\}.$$
Then, excluding vectors $\bs q\in\mb Z^n$ with at least one null component (whose contribution is estimated inductively), we have that
\begin{equation}
\label{eq:newexplanation}
S(\bs\alpha,\bs T)\leq \sum_{k=-\log\left (T_1\dotsm T_n\right)}^{+\infty}e^{k+1}N(\bs{\alpha},\bs T,e^{-k-1},e^{-k}).
\end{equation}
The problem of establishing upper bounds for the function $S(\bs\alpha,\bs T)$ is therefore reduced to estimating the function $N(\bs{\alpha},\bs T,e^{-k-1},e^{-k})$ for $k$ larger than $-\log (T_1\dotsm T_n)$. By considering dyadic intervals for the variables $q_i$, we can further reduce ourselves to analysing the quantity
\begin{multline}
\label{eq:moredyadic}
\#\bigg\{\bs q\in\left[e^{h_1-1},e^{h_1}\right]\times\dotsm\times\left[e^{h_n-1},e^{h_n}\right]\cap\mb{Z}^{n}: \\
e^{-k-h_1-\dotsb -h_{n}-1}<\|\bs q\cdot\bs\alpha\|\leq e^{-k-h_{1}-\dotsm -h_n+1}\bigg\}
\end{multline}
for $0\leq h_i\leq \log T_i$. 
This may be done by using (\ref{eq:cor1}). In particular, if the function $\phi$ decays sufficiently fast, the set of $\phi$-multiplicatively badly approximable matrices $\bs\alpha^t$ will have full Lebesgue measure, allowing us to use Theorem \ref{prop:cor1} for almost all $\bs\alpha\in[0,1)^n$.

The strategy described in the previous paragraph is fruitful so long as the main term $\varepsilon\bar T^{n}$ in (\ref{eq:cor1}) is larger than $1$ (note that $\varepsilon=e^{-k-h_1-\dotsb -h_n}$ and $\bar T^n=e^{h_1+\dotsb +h_n}$). If this is not the case, the error term exceeds the main term and the method brakes down. Hence, the necessity to distinguish two different ranges for the parameter $k$ in (\ref{eq:newexplanation}) .
When $k$ is too large, we use a different argument, based on ideas of Schmidt \cite{Sch60}, which can, in turn, be traced back to Cassels \cite{Cas50}. This argument is, in fact, fairly general. Consider a sequence of functions $f_{\bs n}(\bs\alpha)$ which, in our case, are defined as the size of the intersection of $\Lambda_{\bs\alpha^t}$ with the set in (\ref{eq:vol}). Our goal is to obtain an almost-sure estimate for the function $\sum_{n_i\leq N_i}f_{\bs n}(\bs\alpha)$. Denote by $m_{\bs n}$ the first moment of each function $f_{\bs n}(\bs\alpha)$. Then it is easy to see that $\sum_{n_i\leq N_i}m_{\bs n}$ coincides up to multiplication by a constant with the volume term $\varepsilon \bar T^n$ in (\ref{eq:cor1}), when $N_i=\log T_i$. This shows that the sum $\sum_{n_{i}\leq N_i}m_{\bs n}$ gives a good upper bound for the purpose of proving Theorem \ref{thm:BHV}. We then observe that for each choice of the integers $N_i$ and of intervals $0<A_i\leq B_i\leq N_i$ one has that
\begin{equation}
\label{eq:Schmidtexpl}
\int_{[0,1)^n} \left|\sum_{A_i<n_i\leq B_i}f_{\bs n}(\bs\alpha)\right|d\bs\alpha\ll (B_1-A_1)\dotsm(A_n-B_n)\max_{n_{i}\leq N_i}m_{\bs n}.
\end{equation}
Combining (\ref{eq:Schmidtexpl}) with the Borel-Cantelli Lemma and a dyadic argument similar to that in \cite{Cas50}, one shows that for almost every $\bs\alpha\in [0,1)^n$ and for all but finitely many integers $N_i$ the sums $\sum_{n_i\leq N_i}f_{\bs n}(\alpha)$ is bounded above by $\sum_{n_i\leq N_i}m_{\bs n}$, up to multiplication by a power of $\log(N_1\dotsm N_n)$. The presence of this logarithmic factor (worsening the estimate given by the expected value) is compensated by the fact that the range of different parameters $k$ for which this argument is applied is relatively small. In light of this, we are still able to derive a favourable global upper bound. This is the object of Appendix \ref{sec:fmm}, written in collaboration with Michael Bj\"orklund and Alexander Gorodnik.

\section{Tessellations}
\label{sec:tess}

In this section we present two covering results for sets of the form 
\begin{equation}
\label{eq:asabove}
\{\bs x\in\mb R^d:|x_1\dotsm x_d|<\varepsilon\}.    
\end{equation}
The first will be used to prove both Theorem \ref{prop:cor1} and Theorem \ref{thm:BHV}, while the second will only appear in the proof of Theorem \ref{thm:BHV}. The essence of both results is that a set as in \eqref{eq:asabove}, with given bounds on the variables $x_i$, may be tessellated via the diagonal action of $\textup{SL}_d(\mb R)$ into a controlled number of volume-$1$ tiles. 

Let
\begin{equation*}
H_1:=\left\{\bs{x}\in\mb{R}^{{m}}:\prod_{i= 1}^{{m}}\left|x_{i}\right|<\varepsilon,\ |x_{i}|\leq R,\ i=1,\dotsc,{m}\right\}
\end{equation*}
and
$$H_{1+}:=H_1\cap\left\{\bs{x}\in\mb{R}^{{m}}:x_{i}\neq 0,\ i=1,\dotsc,{m}\right\}.$$

\begin{prop}
\label{thm:partition}
Suppose that $R^{{m}}/\varepsilon>e^{{m}}$, where $e=2.71828\dots$ is the base of the natural logarithm. Then there exist a set of indices $I$, a partition $H_{1+}=\bigcup_{\beta\in{I}}\! X_{\beta}$ of the set $H_{1+}$, and a collection of linear maps $\left\{\varphi_{\beta}\right\}_{\beta\in{{I}}}$ from $\mb{R}^{{m}}$ to itself, such that\vspace{2mm}
\begin{itemize}
\item[$i)$] $\#{I}\ll_{{m}}\log\left(R/\varepsilon^{1/{m}}\right)^{{m}-1}$;\vspace{2mm}
\item[$ii)$] the maps $\varphi_{\beta}$ for $\beta\in{I}$ are determined by the expression $\varphi_{\beta}(\bs{x})_{i}:=e^{a_{\beta,i}}\cdot x_{i}$ for
$i=1,\dotsc,{m}$, where the coefficients $a_{\beta,i}\in\mb{R}$ satisfy\vspace{2mm}
\begin{itemize}
\item[$iia)$] $e^{a_{\beta,i}}\gg_{{m}}\varepsilon^{1/{m}}/R$ for $i=1,\dotsc,{m}$;\vspace{2mm}
\item[$iib)$] $\sum_{i=1}^{{m}}a_{\beta,i}=0$;
\end{itemize}
\vspace{2mm}
\item[$iii)$] the sets $X_{\beta}$ are measurable and $\varphi_{\beta}\left(X_{\beta}\right)\subset\left[-c\varepsilon^{1/{m}},c\varepsilon^{1/{m}}\right]^{{m}}$ for all $\beta\in{I}$, where $c$ is a constant only depending on $m$.\vspace{2mm}
\end{itemize}
\end{prop}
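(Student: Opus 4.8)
The plan is to pass to logarithmic coordinates and to sort the points of $H_{1+}$ into finitely many pieces according to a discrete ``profile'' produced by a greedy capping procedure; each profile will then dictate a single volume-preserving diagonal map. For $\bs x\in H_{1+}$ write $t_i:=\log|x_i|\leq\log R$ and $u_i:=\log R-t_i\geq 0$. Since $\prod_{i=1}^m|x_i|<\varepsilon$ we have $\sum_{i=1}^m u_i>N:=\log(R^m/\varepsilon)=m\log(R\varepsilon^{-1/m})$, and the hypothesis $R^m/\varepsilon>e^m$ gives $N>m$. Define $w_1,\dots,w_m$ greedily by $w_i:=\min\big(u_i,\,N-w_1-\dots-w_{i-1}\big)$; then $\sum_i w_i=N$, and there is an index $k=k(\bs x)$, namely the largest $i$ with $w_i>0$, such that $w_i=u_i$ for $i<k$, $\;0<w_k=N-\sum_{i<k}u_i\leq u_k$, and $w_i=0$ for $i>k$. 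I would then set $\bs n(\bs x):=(\lceil w_1\rceil,\dots,\lceil w_m\rceil)\in\mb Z_{\geq 0}^m$ and $X_{\bs n}:=\{\bs x\in H_{1+}:\bs n(\bs x)=\bs n\}$. The assignment $\bs x\mapsto\bs n(\bs x)$ is Borel, so the nonempty sets $X_{\bs n}$ form a measurable partition of $H_{1+}$; let $I$ index them.

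For the count $i)$, since $s\leq\lceil s\rceil<s+1$, the identities $\sum_{i<k}u_i<N$ and $w_k=N-\sum_{i<k}u_i$ yield $N\leq\sum_i n_i<N+m$; thus $\sum_i n_i$ takes at most $m$ integer values, and for each such value $s$ the number of $\bs n\in\mb Z_{\geq 0}^m$ with $\sum_i n_i=s$ is $\binom{s+m-1}{m-1}\ll_m N^{m-1}$ (using $N>m\geq 1$). Hence $\#I\ll_m N^{m-1}\ll_m\log(R\varepsilon^{-1/m})^{m-1}$.

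For the maps $ii)$ and $iii)$, note first that the index $k$ above equals the largest $i$ with $n_i\geq 1$, hence depends only on $\bs n$ and not on the choice of $\bs x\in X_{\bs n}$. Put $\delta:=\tfrac1m\log\varepsilon-\log R=-N/m$ and define $\varphi_{\bs n}(\bs x)_i:=e^{a_i}x_i$ with $a_i:=\delta+n_i$ for $i<k$, $\;a_i:=\delta$ for $i>k$, and $a_k:=-\sum_{i\neq k}a_i=-(m-1)\delta-\sum_{i<k}n_i$, so that $iib)$ holds by construction. For $i<k$ one has $w_i=u_i\in(n_i-1,n_i]$, i.e. $t_i\in[\log R-n_i,\log R-n_i+1)$, whence $a_i+t_i\in[\tfrac1m\log\varepsilon,\tfrac1m\log\varepsilon+1)$; for $i>k$ one has $a_i+t_i=\tfrac1m\log\varepsilon+(t_i-\log R)\leq\tfrac1m\log\varepsilon$; and since $\sum_{i<k}n_i\geq\sum_{i<k}u_i\geq N-u_k$ a short computation gives $a_k+t_k\leq\tfrac1m\log\varepsilon$. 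Thus $|e^{a_i}x_i|\leq e\,\varepsilon^{1/m}$ for every $i$ and every $\bs x\in X_{\bs n}$, which is $iii)$ with $c=e$. Finally $\sum_{i<k}n_i<\sum_{i<k}u_i+m<N+m$ combined with $N=-m\delta$ gives $a_k>\delta-m$, so $e^{a_k}>e^{-m}\varepsilon^{1/m}/R\gg_m\varepsilon^{1/m}/R$, while $e^{a_i}\geq\varepsilon^{1/m}/R$ for $i\neq k$; this is $iia)$.

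I expect the main obstacle to be the bookkeeping in the greedy step: one must verify that both the capping index $k$ and the confinement of the capped coordinates $t_i$ ($i<k$) to unit-length intervals are determined by the profile $\bs n$ alone, so that a single diagonal map legitimately serves an entire piece $X_{\bs n}$, and then check the interlocking inequalities above. These are all elementary, but need some care in the boundary cases $u_i=0$ (that is, $|x_i|=R$) and $k\in\{1,m\}$, where certain index ranges become empty.
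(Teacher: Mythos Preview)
Your argument is correct. The paper does not prove this proposition in-text but cites \cite{Fre21-2}; judging from the paper's self-contained proof of the companion Proposition~\ref{prop:tess}, the intended method is the standard one of passing to logarithmic coordinates and tiling by integer translates, which is exactly the spirit of your approach.

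Where you differ is in the specific mechanism for discretisation. The paper's proof of Proposition~\ref{prop:tess} uses a direct change of variables $\pi(x_0,\ldots,x_n)=(\log(x_0\cdots x_n),\log x_1,\ldots,\log x_n)$ that turns the domain into a product of a half-line and a box, which is then tiled by the obvious integer grid. For $H_{1+}$ the analogous reduction would single out the ``sum'' coordinate $s=\sum_i t_i$ and tile the remaining $(m-1)$ coordinates over the simplex. Your greedy capping procedure is a different (asymmetric) way of projecting the unbounded region $\{u_i\geq 0,\ \sum u_i>N\}$ onto the boundary simplex $\{\sum w_i=N\}$ before discretising: you exhaust a budget of $N$ coordinate by coordinate and record where the cap bites. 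This is a legitimate alternative and has the virtue of making the verification of $iia)$--$iii)$ completely explicit with constant $c=e$; the price is that the pieces $X_{\bs n}$ have a less transparent shape than in a symmetric tiling, but since only measurability is required this costs nothing.

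Your bookkeeping is sound, including the points you flag: the index $k$ is indeed recoverable from $\bs n$ as the largest $i$ with $n_i\geq 1$ (since $w_k>0$ forces $n_k\geq 1$ while $w_i=0$ for $i>k$), and for $i<k$ the equality $w_i=u_i$ genuinely pins $t_i$ to a unit interval even when $u_i=0$. The chain $\sum_{i<k}n_i\geq\sum_{i<k}u_i=N-w_k\geq N-u_k$ gives $a_k+t_k\leq\tfrac{1}{m}\log\varepsilon$, and $\sum_{i<k}n_i<N+(k-1)$ gives $a_k>\delta-(m-1)$, so both $iii)$ and $iia)$ hold with the stated constants.
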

\noindent The proof of this proposition can be found in \cite{Fre21-2} (see in particular Proposition 2.1 with $\bs\beta=(1,\dots,1)$ and $\varepsilon$ replaced by $\varepsilon^{1/m}$).

For $\varepsilon>0$, $\bs T\in[1,+\infty)^{n}$, and $0<R\leq 1$ consider the set
$$H_2:=\left\{\bs{x}\in\mb{R}^{n+1}: \prod_{i=0}^{n}|x_i|\leq \varepsilon,\ |x_0|\leq R,\ 1\leq |x_{i}|\leq T_i,\ i=1,\dotsc,n\right\}$$
and define
$$H_{2+}:=H_2\cap\left\{\bs{x}\in\mb{R}^{n+1}:x_{i}\neq 0,\ i=0,\dotsc,{n}\right\}.$$

\begin{prop}
\label{prop:tess}
Let $\varepsilon,\bs T$, and $R$ as above, and assume that $\varepsilon<RT_1\dotsm T_n$. Then there exist a set of indices $J$, a covering $H_{2+}\subset\bigcup_{\beta\in{J}}\! Y_{\beta}$ of the set $H_{2+}$, and a collection of linear maps $\left\{\psi_{\beta}\right\}_{\beta\in{{J}}}$ from $\mb{R}^{{n}+1}$ to itself, such that\vspace{2mm}
\begin{itemize}
    \item[$i)$] $ J=\left([0,\log T_1]\times\dotsb\times[0,\log T_n]\cap\mb Z^{n}\right)\times\{1,\dotsc,2^{n}\}$ (in particular, $J$ is independent of the choice of $\varepsilon$);\vspace{2mm}
    \item[$ii)$] the maps $\psi_{\beta}$ for $\beta\in{J}$ are determined by the expressions $\psi_{\beta}(\bs{x})_{i}:=\pm e^{b_{\beta,i}}\cdot x_{i}$ for $i=0,\dotsc,{n}$ and the coefficients $b_{\beta,i}$ satisfy\vspace{2mm}
    \begin{itemize}
        \item[$iia)$] $b_{\beta,0}\geq 0$ and $b_{\beta,1},\dots,b_{\beta,n}\leq 0$;\vspace{2mm}
        \item[$iib)$] $\sum_{i=0}^{n}b_{\beta,i}=0$;
    \end{itemize}
    \vspace{2mm}
    \item[$iii)$] the sets $Y_\beta$ are measurable and $\psi_\beta(Y_\beta)\subset (0,\varepsilon]\times[1,e]^{n}$ for all $\beta\in J$.
\end{itemize}
\end{prop}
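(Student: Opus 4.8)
\textbf{Proof plan for Proposition \ref{prop:tess}.}
The strategy mirrors that of Proposition \ref{thm:partition}, but now one coordinate (the $x_0$-coordinate, playing the role of the ``small'' variable, bounded by $R\le 1$) is singled out, and the remaining $n$ coordinates are each confined to a dyadic band $[e^{h_i-1},e^{h_i}]$ rather than tiled from scratch. First I would dyadically decompose each variable $x_i$ for $i=1,\dots,n$: write $|x_i|\in[e^{h_i-1},e^{h_i}]$ for some integer $0\le h_i\le \log T_i$ (the top band may be truncated, which is harmless since we only need a covering, not a partition). This already produces the index set $J$ up to the factor $\{1,\dots,2^n\}$, which records the sign pattern $(\sign x_1,\dots,\sign x_n)$; note $x_0$ keeps a fixed sign because after applying the map we will land in $(0,\varepsilon]$, and a single global reflection handles $\sign x_0$, so the sign bookkeeping contributes exactly $2^n$ and not $2^{n+1}$.

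Given such a dyadic cell, I would define $\psi_\beta$ to be the diagonal map that (a) reflects each coordinate so all coordinates become positive, (b) multiplies $x_i$ by $e^{-h_i}$ for $i=1,\dots,n$ so that the $i$-th coordinate is pushed into $[1/e,1]\subset[1,e]$ after we also rescale — more precisely I want $\psi_\beta(\bs x)_i\in[1,e]$, so I would multiply $x_i$ by $e^{-(h_i-1)}=e^{1-h_i}$, giving $|x_i|e^{1-h_i}\in[1,e]$, and (c) multiplies $x_0$ by the compensating factor $e^{b_{\beta,0}}$ with $b_{\beta,0}:=\sum_{i=1}^n(h_i-1)\ge 0$ (since each $h_i\ge 1$, wait — $h_i$ can be $0$; in that case the band is $[1/e,1]$ and I would instead take the band $[e^{-1},e^0]$ handled by $h_i=0$, so $b_{\beta,i}=-(h_i-1)$ could be positive; this is the point needing care, see below), so that $\sum_{i=0}^n b_{\beta,i}=0$, establishing $iib)$. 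Then $|x_0|\le R\le 1$ combined with the product constraint $\prod_{i=0}^n|x_i|\le\varepsilon$ forces $\psi_\beta(\bs x)_0=|x_0|e^{b_{\beta,0}}=|x_0|\prod_{i=1}^n e^{h_i-1}\le \varepsilon\prod_{i=1}^n e^{h_i-1}/\prod_{i=1}^n|x_i|\le \varepsilon$ because $|x_i|\ge e^{h_i-1}$; and positivity is clear, so $\psi_\beta(Y_\beta)\subset(0,\varepsilon]\times[1,e]^n$, giving $iii)$. Condition $iia)$ ($b_{\beta,0}\ge 0$, $b_{\beta,i}\le 0$) is where I must be slightly careful: I should normalize the dyadic bands so that each $h_i$ ranges over $\{0,1,\dots,\lceil\log T_i\rceil\}$ with the convention that band $h_i$ is $[e^{h_i},e^{h_i+1}]$ if I want $b_{\beta,i}=-h_i\le 0$ cleanly; matching this against the stated interval $[0,\log T_i]$ for the index and the target $[1,e]^n$ just amounts to choosing the convention $|x_i|\in[e^{h_i},e^{h_i+1})$, $b_{\beta,i}=-h_i$, and then $b_{\beta,0}=\sum_{i=1}^n h_i\ge 0$. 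With this convention $iia)$, $iib)$ hold and $|x_i|e^{-h_i}\in[1,e)$, so $iii)$ holds with the same product estimate as above (now $|x_i|\ge e^{h_i}$ gives $|x_0|e^{b_{\beta,0}}\le\varepsilon$).

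The set $Y_\beta$ is then simply the preimage under $\psi_\beta$ of $\psi_\beta(H_{2+})\cap\big((0,\varepsilon]\times[1,e]^n\big)$ intersected with the dyadic cell; measurability is immediate since everything is a Boolean combination of half-spaces and a hyperbolic region, and that $H_{2+}\subset\bigcup_\beta Y_\beta$ follows because every point of $H_{2+}$ lies in some dyadic cell (its $x_i$ for $i\ge 1$ satisfy $1\le|x_i|\le T_i$, so $0\le h_i\le \log T_i$) with a determined sign pattern, and the map sends it into the target box by the computation above. Finally $i)$ is just the count: $\#J=\prod_{i=1}^n(\lfloor\log T_i\rfloor+1)\cdot 2^n$, matching the stated description once we absorb the $+1$ into the closed-interval notation $[0,\log T_i]\cap\mb Z$. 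The only genuine subtlety — the main obstacle, such as it is — is getting all three sign/normalization conventions ($iia)$, the target $[1,e]^n$ versus $[1/e,1]^n$, and the index set $[0,\log T_i]$) mutually consistent so that the single inequality $|x_0|\prod_{i=1}^n|x_i|^{-1}\cdot\prod_{i=1}^n e^{h_i}\le\varepsilon$ does all the work; the hypothesis $\varepsilon<RT_1\cdots T_n$ is what guarantees $H_{2+}$ is nonempty (so the statement is not vacuous) and that the box $(0,\varepsilon]\times[1,e]^n$ is actually reachable. None of this uses the $\phi$-badly-approximable hypothesis — it is purely a dissection of the Euclidean domain, exactly as with Proposition \ref{thm:partition}.
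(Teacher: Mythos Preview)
Your proposal is correct and follows essentially the same approach as the paper: dyadically decompose $|x_i|$ into bands $[e^{h_i},e^{h_i+1})$ for $i=1,\dots,n$, take $b_{\beta,i}=-h_i$ and $b_{\beta,0}=\sum_i h_i$, and use the product constraint $\prod_{i=0}^n|x_i|\le\varepsilon$ together with $|x_i|\ge e^{h_i}$ to push the zeroth coordinate into $(0,\varepsilon]$. The paper packages this same construction via the logarithmic change of coordinates $\pi(x_0,\dots,x_n)=(\log(x_0\cdots x_n),\log x_1,\dots,\log x_n)$, under which the diagonal maps become integer translations and the dyadic bands become unit cubes, but the content is identical; your self-correction to the convention $|x_i|\in[e^{h_i},e^{h_i+1})$ (rather than $[e^{h_i-1},e^{h_i}]$) is exactly what is needed to get $b_{\beta,i}\le 0$ and $b_{\beta,0}\ge 0$ cleanly.
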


The proof of Proposition \ref{prop:tess} is similar to that of Proposition \ref{thm:partition}, but we report it here for completeness.

\begin{proof}
By symmetry, we may reduce to consider only the set $H_{2+}':=H_{2+}\cap(0,+\infty)^{n}$. Let $\pi:(0,+\infty)^{n}\to\mb{R}^n$ be the map
$$\pi(x,y,z):=(\log (x_0\dotsm x_n),\log x_1,\dotsc,\log x_n)=(s,v_1,\dotsc,v_n).$$
For $\bs b\in\mb{Z}^n$ define the translation $\tau (\bs b):\mb R^{n+1}\to\mb R^{n+1}$ given by
$$\tau(\bs b)(s,v_1,\dotsc,v_n):=(s,v_1+b_1,\dotsc,v_n+b_n).$$
Then, we have
$$\textup{diag}\left(e^{-\sum_{i}b_i},e^{b_1},\dotsc,e^{b_n}\right)=\pi^{-1}\circ \tau(\bs b)\circ \pi,$$
where $\textup{diag}(\bs x)_{ij}:=x_{i}\delta_{ij}$ for $\bs x\in\mb{R}^{n+1}$ and $i,j=0,\dotsc,n$. Moreover, with $J':=[0,\log T_1]\times\dotsb\times[0,\log T_n]\cap\mb Z^n$, it holds that
$$\pi(H_{2+}')\subset\bigcup_{\bs{b}\in J'}\tau(\bs b)\left((-\infty,\log \varepsilon]\times[0,1)^{n}\right).$$
It follows that
$$H_{2+}'\subset\bigcup_{\bs b\in J'}\pi^{-1}\circ\tau(\bs b)\circ\pi\left((0,\varepsilon]\times[1,e]^{n}\right).$$
To conclude, it is enough, to set
$$\psi_{\bs b}:=\pi^{-1}\circ \tau(-\bs b)\circ \pi$$
and
$$Y_{\bs b}:=\psi_{\bs b}^{-1}\left((0,\varepsilon]\times[1,e]^{n}\right)$$
for all $\bs b\in J'$. The indices $J$, the sets $Y_{\beta}$, and the maps $\psi_{\beta}$ are obtained by a similar argument applied to all the remaining orthants of $\mb R^n$.
\end{proof}

\section{Proof of Theorem \ref{prop:cor1}}
\label{sec:proofM}

As discussed in the introduction, estimating the quantity $\#M(\bs{L},\varepsilon,R,\bs{T})$ can be reduced to a lattice-point counting problem. Here and hereafter, we write $\bs{v}=(\bs{x},\bs{y})$, with $\bs{x}\in\mb{R}^{m}$ and $\bs{y}\in\mb{R}^{m}$, for vectors $\bs{v}\in\mb{R}^{m+n}$. We let
\begin{equation}
\label{eq:AL}
A_{\bs{L}}:=\begin{pmatrix}
I_m & \bs{L} \\
\bs{0} & I_{n}
\end{pmatrix}\in\mb{R}^{(m+n)\times(m+n)},    
\end{equation}
where $I_{m}$ and $I_{n}$ are identity matrices of size $m$ and $n$ respectively, and we let $\Lambda_{\bs{L}}:=A_{\bs{L}}\mb{Z}^{m+n}\subset\mb{R}^{m+n}$. We define the sets
\begin{equation*}
H_1:=\left\{\bs{x}\in\mb{R}^{m}:\prod_{i= 1}^{m}\left|x_{i}\right|<\varepsilon,\ |x_{i}|\leq R,\ i=1,\dotsc,m\right\}
\end{equation*}
and $Z:=H_1\times\prod_{j=1}^{n}[-T_{j},T_{j}]\subset\mb{R}^{m+n}$. Then, we have that
\begin{equation}
\label{eq:intersection}
\# M(\bs{L},\varepsilon,R,\bs{T})=\#\left(\left(\Lambda_{\bs{L}}\cap Z\right)\setminus C\right),
\end{equation}
where $C:=\{\bs{y}=\bs{0}\}\subset\mb{R}^{m+n}$. Since $\Lambda_{\bs{L}}\cap C=\mb{Z}^{m}\times\{\bs{0}\}$, to estimate $\# M(\bs{L},\varepsilon,R,\bs{T})$, it suffices to determine the cardinality of the set $\Lambda_{\bs{L}}\cap Z$.

We start by partitioning $Z$ through Proposition \ref{thm:partition}. Let
$$H_{1+}:=H_1\cap\left\{\bs{x}\in\mb{R}^{m}:x_{i}\neq 0,\ i=1,\dotsc,m\right\}$$
and let $Z_{+}:=H_{1+}\times\prod_{j=1}^{n}[-T_{j},T_{j}]$. Let also
$$H_1^{i}:=H_1\cap\{x_{i}=\bs{0}\}$$
and $Z^{i}:=H_1^{i}\times\prod_{j=1}^{n}[-T_{j},T_{j}]$ for $i=1,\dotsc,m$. Then, we have
\begin{equation}
Z=Z_{+}\cup\bigcup_{i=1}^{m}Z^{i}.\nonumber
\end{equation}
It follows that
\begin{equation}
\#(\Lambda_{\bs{L}}\cap Z)\leq\#(\Lambda_{\bs{L}}\cap Z_{+})+\sum_{i=1}^{m}\#\left(\Lambda_{\bs{L}}\cap Z^{i}\right).\nonumber
\end{equation}

Now, we apply Proposition \ref{thm:partition} to the set $H_1$. We set $\tilde{X}_{\beta}:=X_{\beta}\times\prod_{j=1}^{n}[-T_{j},T_{j}]$ for $\beta\in I$, and we extend the maps $\varphi_{\beta}$ to $\tilde{\varphi}_{\beta}:\mb{R}^{m+n}\to\mb{R}^{m+n}$ ($\beta\in I$), by defining $\tilde{\varphi}_{\beta}$ as the identity map on the second $n$ coordinates. In view of this, we find a partition
$$Z_{+}=\bigcup_{\beta\in I}\tilde{X}_{\beta}$$
of the set $Z_{+}$ and we have that
\begin{align}
 \#\left(\Lambda_{\bs{L}}\cap Z\right) & \leq\#(\Lambda_{\bs{L}}\cap Z_{+})+\sum_{i=1}^{m}\#\left(\Lambda_{\bs{L}}\cap Z^{i}\right)\nonumber\\
 & =\sum_{\beta\in I}\#\left(\Lambda_{\bs{L}}\cap \tilde{X}_{\beta}\right)+\sum_{i=1}^{m}\#\left(\Lambda_{\bs{L}}\cap Z^{i}\right)\nonumber \\
 & =\sum_{\beta\in I}\#\left(\tilde{\varphi}_{\beta}(\Lambda_{\bs{L}})\cap \tilde{\varphi}_{\beta}\left(\tilde{X}_{\beta}\right)\right)+\sum_{i=1}^{m}\#\left(\Lambda_{\bs{L}}\cap Z^{i}\right)\nonumber.
\end{align}
\noindent We deal with these terms separately. We start with $\#\left(\Lambda_{\bs{L}}\cap Z^{i}\right)$ for $i=1,\dotsc,{m}$.

\begin{lem}
\label{lem:LambdacapC}
For $i=1,\dotsc,{m}$ we have
\begin{equation}
\#\left(\Lambda_{\bs{L}}\cap Z^{i}\right)\ll_{{m}}(1+R)^{{m}}.\nonumber
\end{equation}
\end{lem}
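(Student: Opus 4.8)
The plan is to reduce the count on $Z^{i}$ to a count on a lattice of rank one less. Fix $i \in \{1,\dots,m\}$. A point of $\Lambda_{\bs L}\cap Z^{i}$ has the form $(\bs x, \bs y) = A_{\bs L}(\bs p, \bs q)$ with $\bs p \in \mb Z^{m}$, $\bs q \in \mb Z^{n}$, where $x_{j} = L_{j}\bs q + p_{j}$ and $\bs y = \bs q$. The condition $x_{i} = 0$ forces $L_{i}\bs q + p_{i} = 0$, i.e.\ $L_{i}\bs q \in \mb Z$. For the remaining coordinates we still have $|x_{j}| \le R$ for $j \ne i$ and $|q_{j}| \le T_{j}$ for all $j$; the multiplicative constraint $\prod_{j}|x_{j}| < \varepsilon$ is automatically satisfied (it reads $0 < \varepsilon$) and so imposes nothing extra. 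Thus the number of such lattice points is at most the number of $(\bs p', \bs q) \in \mb Z^{m-1}\times\mb Z^{n}$ with $|L_{j}\bs q + p_{j}| \le R$ for the $m-1$ indices $j\ne i$ and $|q_{j}|\le T_{j}$, \emph{and} with $L_{i}\bs q + p_{i} = 0$ solvable in $p_{i}\in\mb Z$ — but once we drop the $p_{i}$ coordinate and the constraint $x_{i}=0$, the count is over exactly the lattice $\Lambda_{\bs L'}$ in dimension $(m-1)+n$ intersected with $H_{1}'\times\prod_{j}[-T_{j},T_{j}]$ where $\bs L'$ is $\bs L$ with row $i$ deleted and $H_{1}'$ only carries the box constraints (no product constraint).

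Next I would bound this rank-$(m-1+n)$ count directly and crudely, since no multiplicative constraint survives. The relevant set is $\{|x_{j}|\le R : j\ne i\}\times\prod_{j=1}^{n}[-T_{j},T_{j}]$, a box. However, to get a bound depending only on $(1+R)^{m}$ and not on the $T_{j}$, the key observation is that the $\bs y$-coordinates of a lattice point in $\Lambda_{\bs L}$ are integers (they equal $\bs q$), and once $\bs q$ is fixed the constraint $|L_{j}\bs q + p_{j}|\le R$ pins $p_{j}$ down to at most $2R+1 \ll 1+R$ choices for each $j\ne i$. So the count is $\ll_{m}(1+R)^{m-1}\cdot \#\{\text{admissible }\bs q\}$. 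The point is that the condition $x_{i}=0$, i.e.\ $L_{i}\bs q\in\mb Z$, should cut the number of admissible $\bs q$ down to $\ll 1+R$: indeed for each admissible $\bs q$ there is $p_{i}\in\mb Z$ with $L_{i}\bs q + p_{i}=0$; pairing $(\bs p, \bs q)$ with the vector $(L_{i}\bs q + p_{i}, \dots) = $ the $i$-th coordinate being $0$. Actually the cleaner route: the sublattice $\Lambda_{\bs L}\cap\{x_{i}=0\}$ is a lattice of rank $m-1+n$ (it is the kernel of the coordinate functional $x_{i}$ on $\Lambda_{\bs L}$, and this functional is nonzero unless $L_{i}$ has a special form, in which case one handles it separately or notes the rank is $\le m-1+n$). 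Its points with $|x_{j}|\le R$ for $j\ne i$ and $\bs y$ integral in a box: project to the $(x_{j})_{j\ne i}$-coordinates; this projection restricted to $\Lambda_{\bs L}\cap\{x_{i}=0\}$ is injective (because $\bs y = \bs q$ is determined by... no — one must check injectivity). If the projection onto $(x_{j})_{j\ne i}$ is injective on the sublattice, then the count is $\ll_{m}(1+R)^{m-1}$, which is even better than claimed; the stated $(1+R)^{m}$ is then a safe over-estimate.

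I would therefore carry out the steps in this order: (1) set up the bijection identifying $\Lambda_{\bs L}\cap Z^{i}$ with a subset of a lattice-point count in dimension $m-1+n$ with only box constraints; (2) observe the product constraint is vacuous on $Z^{i}$; (3) show the relevant sublattice of $\Lambda_{\bs L}$ has rank $\le m+n-1$, and that an appropriate $(m-1)$-dimensional coordinate projection is injective on it (or argue via fixing $\bs q$ and counting $O(1+R)$ choices of each $p_{j}$, $j\ne i$, while showing the compatible $\bs q$ are few); (4) conclude $\#(\Lambda_{\bs L}\cap Z^{i})\ll_{m}(1+R)^{m-1}\le(1+R)^{m}$. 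The main obstacle is step (3): handling injectivity/rank carefully, in particular making sure the bound does not secretly depend on the $T_{j}$. The resolution is that on $\{x_{i}=0\}$ the integral $\bs q$ is constrained by $L_{i}\bs q\in\mb Z$ to lie in a sublattice $\Lambda_{0}\subseteq\mb Z^{n}$, and the coordinate functionals $x_{j}$ for $j\ne i$ together with membership in $\Lambda_{0}$ force the relevant lattice to behave like a rank-$(m-1)$ lattice inside the box $[-R,R]^{m-1}$ once we quotient out the integer directions; a covolume/successive-minima bound then gives $O((1+R)^{m-1})$ uniformly in $\bs T$. (If $\bs L$ is such that the sublattice has smaller rank, the bound only improves.)
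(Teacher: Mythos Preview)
Your plan has a genuine gap at step (3), and it cannot be closed as written. The claim that ``the coordinate functionals $x_{j}$ for $j\ne i$ together with membership in $\Lambda_{0}$ force the relevant lattice to behave like a rank-$(m-1)$ lattice inside $[-R,R]^{m-1}$'' is false in general. If the sublattice $\Lambda_{0}=\{\bs q\in\mb Z^{n}:L_{i}\bs q\in\mb Z\}$ has positive rank (for instance if the entries of $L_{i}$ satisfy any $\mb Z$-linear relation with $1$), then for each $\bs q\in\Lambda_{0}\cap\prod_{j}[-T_{j},T_{j}]$ and each of the $\asymp(1+R)^{m-1}$ admissible choices of $(p_{j})_{j\ne i}$ you get a \emph{distinct} point of $\Lambda_{\bs L}\cap Z^{i}$. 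Hence $\#(\Lambda_{\bs L}\cap Z^{i})\gg(1+R)^{m-1}\cdot\#\bigl(\Lambda_{0}\cap\prod_{j}[-T_{j},T_{j}]\bigr)$, which grows with the $T_{j}$. Your projection argument does not repair this: the map onto the $(x_{j})_{j\ne i}$-coordinates is not injective on $\Lambda_{\bs L}\cap\{x_{i}=0\}$ in this situation, and no covolume or successive-minima bound will make the count independent of $\bs T$.

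What you are missing is the standing hypothesis on $\bs L$. The matrix is assumed $\phi$-multiplicatively badly approximable, so $\prod_{k}\|L_{k}\bs q\|>0$ and in particular $\|L_{i}\bs q\|>0$ for every $\bs q\in\mb Z^{n}\setminus\{\bs 0\}$; equivalently $1,L_{i1},\dotsc,L_{in}$ are linearly independent over $\mb Z$. Thus $L_{i}\bs q+p_{i}=0$ forces $\bs q=\bs 0$, i.e.\ $\Lambda_{0}=\{\bs 0\}$, so $\Lambda_{\bs L}\cap Z^{i}\subset\Lambda_{\bs L}\cap C=\mb Z^{m}\times\{\bs 0\}$ and the count is at most $\#\bigl(\mb Z^{m}\cap[-R,R]^{m}\bigr)\ll_{m}(1+R)^{m}$. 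This is precisely the paper's argument; your more elaborate route is unnecessary once you use this observation, and without it the desired bound uniformly in $\bs T$ simply does not hold.
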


\begin{proof}
Since the entries $L_{i1},\dotsc,L_{i{n}}$ of the matrix $\bs L$ along with $1$ are linearly independent over $\mb{Z}$, the equation $L_{i}\bs{q}+p_{i}=0$ implies that $\bs{q}=\bs{0}$. It follows that $\Lambda_{\bs{L}}\cap Z^{i}\subset \Lambda_{\bs{L}}\cap C$ for $i=1,\dotsc,{m}$. Hence, we have that $\#(\Lambda_{\bs{L}}\cap Z^{i})\leq\#((\Lambda_{\bs{L}}\cap C)\cap(Z\cap C))$. Now, we observe that $\Lambda_{\bs{L}}\cap C=\mb{Z}^{{m}}\times\{\bs{0}\}$ and $Z\cap C\subset[-R,R]^{{m}}\times\{\bs{0}\}$. This immediately yields the required inequality.
\end{proof}

We are left to estimate the quantity $\#\left(\tilde{\varphi}_{\beta}(\Lambda_{\bs{L}})\cap \tilde{\varphi}_{\beta}\left(\tilde{X}_{\beta}\right)\right)$ for $\beta\in{I}$. To do so, we use a result from \cite{BW14} which generalises a theorem of Davenport (see \cite[Equation (1.2)]{BW14} and discussion therein).
\begin{theorem}
\label{thm:latticeest}
Let $d\in\mb{N}$ and let $\Lambda$ be a full rank lattice in $\mb{R}^{d}$. Let also $Q\geq 1$ and $B_{Q}:=[-Q,Q]^d\subset\mb{R}^{d}$. Then we have that
$$\#(\Lambda\cap B_{Q})\ll_{d}1+\sum_{s=1}^{d}\frac{Q^s}{\delta_{1}\dotsm\delta_{s}},$$
where $\delta_{s}$ is the $s$-th successive minimum of the lattice $\Lambda$.
\end{theorem}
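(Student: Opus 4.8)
The final statement to prove is Theorem~\ref{thm:latticeest}, the lattice-point count in a box in terms of successive minima. The plan is to reduce to Davenport's classical estimate via Minkowski's second theorem on successive minima, together with the standard trick of passing to a reduced basis adapted to the minima.

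First I would invoke Minkowski's theorem on successive minima: for a full-rank lattice $\Lambda\subset\mb R^d$ there is a basis $\bs e_1,\dots,\bs e_d$ of $\mb R^d$ (not necessarily of $\Lambda$, but close to it — a so-called \emph{reduced} or Minkowski-reduced basis) and linearly independent lattice vectors $\bs v_1,\dots,\bs v_d\in\Lambda$ with $|\bs v_s|\asymp_d\delta_s$. After applying an orthogonal transformation (which preserves both the box up to a bounded factor and the successive minima) one may assume the $\bs v_s$ are in a convenient position; alternatively, and more cleanly, one decomposes $\mb R^d$ using the flag spanned by $\bs v_1,\dots,\bs v_s$. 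The point is that any lattice point in $B_Q$, written in this adapted frame, has its $s$-th coordinate controlled by $Q/\delta_s$ up to constants depending only on $d$. Counting the number of admissible coordinate vectors then yields a product $\prod_{s=1}^d(1+Q/\delta_s)$, which expands into $1+\sum_{s=1}^d Q^s/(\delta_1\dotsm\delta_s)$ after using the monotonicity $\delta_1\le\dots\le\delta_d$ to absorb cross terms. This is exactly Davenport's lemma, and since the statement explicitly says the result is a known generalization from \cite{BW14}, I would simply cite \cite[Lemma~2 or the relevant statement]{BW14} (or Davenport's original paper) rather than reproving it in full; a one-paragraph sketch of the reduced-basis argument suffices.

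Concretely, the steps in order: (1) recall Minkowski-reduced basis $\bs b_1,\dots,\bs b_d$ of $\Lambda$ with $|\bs b_s|\asymp_d\delta_s$ and the fact that every $\bs x\in\Lambda$ has a representation $\bs x=\sum_s n_s\bs b_s$ with integer $n_s$; (2) show that if $\bs x\in B_Q$ then $|n_s|\ll_d Q/\delta_s$ for each $s$ — this uses that the Gram matrix of a reduced basis is bounded above and below in terms of $\prod\delta_s$ and $|B_Q|$, equivalently that the inverse basis matrix has rows of size $\ll_d 1/\delta_s$; (3) conclude $\#(\Lambda\cap B_Q)\le\prod_{s=1}^d(1+c_dQ/\delta_s)$; (4) expand the product and bound it by $1+\sum_{s=1}^d Q^s/(\delta_1\dotsm\delta_s)$ times a constant, using $\delta_1\le\dots\le\delta_d\le$ whatever is needed and $Q\ge1$.

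The main obstacle — and really the only subtle point — is step~(2): controlling the coordinates $n_s$ of a lattice point lying in the box. Naively one only knows $|\bs x|\le Q\sqrt d$, and extracting individual bounds on the $n_s$ requires that the reduced basis be well-conditioned, i.e. that the angles between the $\bs b_s$ are bounded away from $0$ in a quantitative, dimension-dependent way. This is precisely the content of the theory of reduced bases (Hermite/Minkowski reduction) and is where the implied constant's dependence on $d$ enters; it is classical but not entirely trivial. Since the paper cites \cite{BW14} for this theorem, the cleanest route is to state it, give the reduced-basis sketch above in a sentence or two, and refer to \cite[Equation~(1.2)]{BW14} and the Davenport reference therein for the details, which is evidently the intended level of rigor here.
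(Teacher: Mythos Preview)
Your proposal is correct and matches the paper's treatment: the paper does not prove Theorem~\ref{thm:latticeest} at all but simply imports it from \cite[Equation~(1.2)]{BW14} as a known generalization of Davenport's lemma. Your reduced-basis sketch is a reasonable outline of the underlying argument, but for the purposes of this paper the bare citation is all that is needed.
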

\noindent Here and throughout the rest of the paper, the $s$-th successive minimum of a full-rank lattice $\Lambda\subset\mb R^d$ will stand for
$$\min\{r>0:B(0,r)\cap\Lambda\mbox{ contains }s\mbox{ linearly independent vectors}\},$$
with $B(0,r)$ a Euclidean ball of radius $r$ centered at the origin. Moreover, for simplicity, in the remainder of this section and in the next one we will denote the geometric mean $\bar T$ of the numbers $T_1,\dotsc,T_j$ simply by $T$.

From Proposition \ref{thm:partition} it follows that
\begin{equation}
\label{eq:27}
\tilde{\varphi}_{\beta}\left(\tilde{X}_{\beta}\right)\subset\left[-c\varepsilon^{1/{m}},c\varepsilon^{1/{m}}\right]^{{m}}\times\prod_{j=1}^{{n}}[-T_{j},T_{j}]
\end{equation}
for $\beta\in{I}$, where $c>0$ only depends on $m$. To apply Theorem \ref{thm:latticeest}, we re-scale the box in (\ref{eq:27}) and transform it into a cube. We set
$$\theta:=\frac{\left(\varepsilon T^{{n}}\right)^{\frac{1}{{m}+{n}}}}{\varepsilon^{1/{m}}},$$
and we consider two linear maps $\omega_{1},\omega_{2}:\mb{R}^{{m}+{n}}\to \mb{R}^{{m}+{n}}$, defined by
$$\omega_{1}(\bs{x},\bs{y}):=\left(\bs{x},\frac{T}{T_{1}}y_{1},\dotsc,\frac{T}{T_{{n}}}y_{{n}}\right)$$
and
$$\omega_{2}\left(\bs{x},\bs{y}\right):=\left(\theta\bs{x},\theta^{-\frac{{m}}{{n}}}\bs{y}\right)$$
for $(\bs{x},\bs{y})\in\mb{R}^{{m}+{n}}$.
Then we have that
\begin{equation}
\omega_{2}\circ\omega_{1}\left(\left[-\varepsilon^{1/{m}},\varepsilon^{1/{m}}\right]^{{m}}\times\prod_{j=1}^{{n}}[-T_{j},T_{j}]\right)=\left[-\left(\varepsilon T^{{n}}\right)^{\frac{1}{{m}+{n}}},\left(\varepsilon T^{{n}}\right)^{\frac{1}{{m}+{n}}}\right]^{{m}+{n}}.\nonumber
\end{equation}
\noindent Hence, from Lemma \ref{lem:LambdacapC} and Theorem \ref{thm:latticeest} we deduce that
\begin{align}
\label{neweq:estimate3}
 & \#\left(\Lambda_{\bs{L}}\cap Z\right)\leq\sum_{\beta\in{I}}\#\left(\omega_{2}\circ\omega_{1}\circ\tilde{\varphi}_{\beta}(\Lambda_{\bs{L}})\cap \omega_{2}\circ\omega_{1}\circ\tilde{\varphi}_{\beta}\left(\tilde{X}_{\beta}\right)\right)+\sum_{i=1}^{{m}}\#\left(\Lambda_{\bs{L}}\cap Z^{i}\right)\nonumber\\
 & \ll_{{m},{n}}\sum_{\beta\in{I}}\left(1+\sum_{s=1}^{{m}+{n}}\frac{\left(\varepsilon T^{{n}}\right)^{\frac{s}{{m}+{n}}}}{\delta_{1}\dotsm\delta_{s}}\right)+(1+R)^{{m}},
\end{align}
where $\delta_{s}$ denotes the $s$-th successive minimum of the lattice $\omega_{2}\circ\omega_{1}\circ\tilde{\varphi}_{\beta}(\Lambda_{\bs{L}})$ ($\beta\in{I}$) for $s=1,\dotsc,{m}+{n}$.
We are therefore left to estimate the quantities
$$\frac{\left(\varepsilon T^{{n}}\right)^{\frac{s}{{m}+{n}}}}{\delta_{1}\dotsm\delta_{s}}$$
for $\beta\in{I}$.

\begin{prop}
\label{prop:firstminima}
Let $\beta\in{I}$ and let $\delta_{1},\dotsc,\delta_{{m}+{n}}$ be the successive minima of the lattice
$\omega_{2}\circ\omega_{1}\circ\tilde{\varphi}_{\beta}(\Lambda_{\bs{L}})$. Then
\begin{align}\
\label{eq:firstminima} 
 & \frac{\left(\varepsilon T^{{n}}\right)^{\frac{s}{{m}+{n}}}}{\delta_{1}\dotsm\delta_{s}}\ll_{{m},{n}}1+R^{{m}+{n}-1}+\varepsilon T^{{n}}+\left(\frac{\varepsilon T^{{n}}}{\phi(T)}\right)^{\frac{{m}+{n}-1}{{m}+{n}}}.
\end{align}
for all $s=1,\dotsc,{m}+{n}$.
\end{prop}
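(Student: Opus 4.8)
The plan is to bound each ratio $\left(\varepsilon T^n\right)^{s/(m+n)}/(\delta_1\cdots\delta_s)$ by controlling the product $\delta_1\cdots\delta_s$ from below. Since $\omega_2\circ\omega_1\circ\tilde\varphi_\beta$ is a composition of diagonal, volume-preserving maps (note $\varphi_\beta$ has determinant $1$ by $iib)$ of Proposition \ref{thm:partition}, $\omega_1$ and $\omega_2$ are designed to be volume-preserving, and $\Lambda_{\bs L}$ is unimodular), the transformed lattice $\Lambda':=\omega_2\circ\omega_1\circ\tilde\varphi_\beta(\Lambda_{\bs L})$ is unimodular. By Minkowski's second theorem, $\delta_1\cdots\delta_{m+n}\asymp_{m,n}1$, so the case $s=m+n$ is immediate: the ratio is $\ll_{m,n}(\varepsilon T^n)$. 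For $s<m+n$, I would use the trivial bound $\delta_{s+1}\cdots\delta_{m+n}\leq\delta_{m+n}^{m+n-s}$ together with $\delta_1\cdots\delta_{m+n}\gg_{m,n}1$ to get $\delta_1\cdots\delta_s\gg_{m,n}\delta_{m+n}^{-(m+n-s)}$, hence
\[
\frac{\left(\varepsilon T^n\right)^{s/(m+n)}}{\delta_1\cdots\delta_s}\ll_{m,n}\left(\varepsilon T^n\right)^{s/(m+n)}\delta_{m+n}^{m+n-s}.
\]
So everything reduces to an \emph{upper} bound on the largest successive minimum $\delta_{m+n}$, or more precisely on the $m+n$ quantities $\left(\varepsilon T^n\right)^{s/(m+n)}\delta_{m+n}^{m+n-s}$.

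To bound $\delta_{m+n}$ I would exhibit $m+n$ linearly independent short vectors in $\Lambda'$. The natural candidates are the images under $\omega_2\circ\omega_1\circ\tilde\varphi_\beta$ of the columns of $A_{\bs L}$: namely the images of $\bs e_1,\dots,\bs e_m$ (the "$p$-directions") and of the vectors $(L_1\bs q+\bs p,\bs q)$ for a suitable basis $\bs q$ of $\mb Z^n$ (the "$q$-directions"), or just the standard basis vectors $(\bs e_j$ in the last $n$ coordinates, paired with the corresponding column of $\bs L$). For the first $m$ basis vectors $\bs e_i$: $\tilde\varphi_\beta$ scales the $i$-th coordinate by $e^{a_{\beta,i}}$, $\omega_1$ does nothing to the first $m$ coordinates, and $\omega_2$ scales by $\theta$; so the image has length $\theta e^{a_{\beta,i}}$, and by $iia)$ of Proposition \ref{thm:partition}, $e^{a_{\beta,i}}\gg_m\varepsilon^{1/m}/R$, which gives a factor $\ll_m\theta R/\varepsilon^{1/m}=R(\varepsilon T^n/\varepsilon)^{\cdots}$; unwinding, $\theta\varepsilon^{1/m}=(\varepsilon T^n)^{1/(m+n)}$, so these vectors have length $\asymp(\varepsilon T^n)^{1/(m+n)}e^{a_{\beta,i}}\varepsilon^{-1/m}\cdot\varepsilon^{1/m}\leq R(\varepsilon T^n)^{1/(m+n)}\cdot$(bounded), i.e. $\ll_m R(\varepsilon T^n)^{1/(m+n)}/(\varepsilon T^n)^{1/(m+n)}$... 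I will need to track the $\theta$ powers carefully, but the upshot is a bound of the shape $\delta_i\ll_m \max\{1,R\}\cdot(\text{something})$. For the last $n$ directions: the $j$-th such vector is $(L_j$-part$,\bs e_j)$; after $\tilde\varphi_\beta$, $\omega_1$, $\omega_2$, its last $n$ coordinates are scaled by $\theta^{-m/n}\cdot T/T_j$, contributing length $\ll\theta^{-m/n}T/\min_j T_j$ — but this is where the \emph{Diophantine condition} must enter, because we do not want $\min_j T_j$.

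\textbf{The main obstacle, and how condition (\ref{eq:condition1}) saves us.} The crude estimate above is exactly the "trivial" bound with $\bar T$ replaced by $\max_i T_i$ that the introduction warns is useless. The fix is to choose the basis of $\mb Z^n$ adaptively and to balance the $\bs x$- and $\bs y$-parts of each short vector using the arithmetic–geometric mean inequality, exactly as sketched in the Methods section. Concretely, for a lattice vector coming from $\bs q\in\mb Z^n$, its image under the composed diagonal map has $\bs x$-coordinates roughly $e^{a_{\beta,i}}\theta|L_i\bs q+p_i|$ and $\bs y$-coordinates roughly $\theta^{-m/n}(T/T_j)|q_j|$; the product of all $m+n$ of these, by AM–GM, is at least $(m+n)^{-(m+n)/2}$ times the $(m+n)$-th power of the Euclidean length, while the product itself equals (using $\sum a_{\beta,i}=0$, the volume-preserving property, and $\theta$'s definition)
\[
\theta^{m}\theta^{-m}\Big(\textstyle\prod_i|L_i\bs q+p_i|\Big)\Big(\textstyle\prod_j \tfrac{T}{T_j}|q_j|\Big)
= T^{n}\,\frac{\prod_i|L_i\bs q+p_i|\cdot\prod_j|q_j|}{T_1\cdots T_n}.
\]
Now $\prod_j|q_j|\geq$ (up to the $+$-truncation issue) $q_1^+\cdots q_n^+$ for $\bs q$ with no zero coordinate, and $\prod_i|L_i\bs q+p_i|\geq\prod_i\|L_i\bs q\|$, so by (\ref{eq:condition1}) this product is $\geq\phi((q_1^+\cdots q_n^+)^{1/n})\cdot T^n/(T_1\cdots T_n)\geq\phi(\bar T)\cdot T^n/\bar T^n=\phi(T)$ once $|q_j|\leq T_j$, using that $\phi$ is non-increasing and $T=\bar T$ here. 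Hence every such vector has Euclidean length $\gg_{m,n}\phi(T)^{1/(m+n)}$, and choosing $n$ independent ones gives a contribution to $\delta_{m+n}^{m+n-s}$ that, combined with the $\bs e_i$-vectors, yields exactly the term $(\varepsilon T^n/\phi(T))^{(m+n-1)/(m+n)}$; the $R^{m+n-1}$ and $\varepsilon T^n$ and $1$ terms come from the $\bs e_i$-directions and the $s=m+n$ case respectively. The delicate point — the part requiring genuine care rather than bookkeeping — is handling $\bs q$ with \emph{some} zero coordinates (the $q_j^+$ truncation): there one works in the corresponding coordinate subspace, applies the same AM–GM balancing only to the nonzero coordinates, and checks that the dropped coordinates only help. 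This is precisely the "account for the \emph{null} components" subtlety flagged in the Methods section, and it is where I would spend the bulk of the proof.
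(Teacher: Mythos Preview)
Your proposal has a genuine gap, and it is exactly the one the paper singles out as the main difficulty. After (correctly) abandoning the attempt to bound $\delta_{m+n}$ from above via explicit short vectors, you switch to using AM--GM and condition~\eqref{eq:condition1} to obtain a \emph{lower} bound on the length of a lattice vector with $\bs q\neq\bs 0$. When all $q_j\neq 0$ and $|q_j|\leq T_j$, your computation is right and yields $\delta_1\gg_{m,n}\phi(T)^{1/(m+n)}$, hence $(\varepsilon T^n)^{\sigma/(m+n)}/(\delta_1\dotsm\delta_\sigma)\ll(\varepsilon T^n/\phi(T))^{\sigma/(m+n)}$, which is fine for $\sigma\leq m+n-1$. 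The error is your claim that, when some $q_j$ vanish, ``the dropped coordinates only help''. They do not. If only $h_1$ of the $q_j^1$ are nonzero, AM--GM applies to just $m+h_1$ coordinates, the leftover $\theta$-powers and the missing $T/T_j$ factors do not cancel, and one obtains (this is precisely Lemma~\ref{lem:sleqM})
\[
\frac{(\varepsilon T^n)^{\sigma/(m+n)}}{\delta_1\dotsm\delta_\sigma}\ll_{m,n}1+\Bigl(\tfrac{\varepsilon T^n}{\phi(T)}\Bigr)^{\sigma/(m+h_1)}.
\]
For $h_1=1$ and any $\sigma>m$ the exponent exceeds $(m+n-1)/(m+n)$, and the bound is worthless. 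The paper's Remark after Lemma~\ref{lem:quickcase} makes this explicit: applying the same idea to each $\delta_s$ separately gives exponent $\sum_{s\leq\sigma}(m+h_s)^{-1}$, which is a harmonic-type sum and does not stay below $(m+n-1)/(m+n)$.

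What is actually required is substantially more than bookkeeping. The paper first builds (Lemma~\ref{lem:monotonicity}) a basis $\bs v^1,\dotsc,\bs v^{m+n}$ with $|\bs v^s|\asymp\delta_s$ and \emph{nested} supports $\mathfrak I(\bs v^s)\subseteq\mathfrak I(\bs v^{s+1})$, then permutes the $y$-coordinates so these supports form a staircase (Lemma~\ref{lem:monotonicity2}, giving~\eqref{eq:partvi}). The core step (Lemma~\ref{lem:quickcase}) applies a \emph{weighted} AM--GM to each $|\bs v^s|$, with weights $1/k_s$ defined by the recursion~\eqref{eq:fillingup}: whenever the $(s{+}1)$-th coordinate is missing from earlier basis vectors, the deficit is compensated by assigning extra weight to $v^{s}_{s+1}$. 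This coupling across levels forces the factors $T/T_j$ to cancel in the \emph{product} $\delta_1\dotsm\delta_\sigma$ (not in each $\delta_s$ individually) and yields the exponent $\alpha_\sigma\leq\sigma/(\sigma+1)$ of Lemma~\ref{lem:exponentswelldef}. A separate argument (Lemma~\ref{lem:slowcase}) via Minkowski on a sublattice handles the case where some $\mathfrak I(\bs v^{\sigma_0})=\{1,\dotsc,\sigma_0\}$, and Lemma~\ref{lem:qneq0} disposes of $\bs q^{\sigma_0}=\bs 0$ (this is where the $R^{m+n-1}$ term comes from). None of this structure is present in your outline; in particular, your plan does not lower-bound $\delta_1\dotsm\delta_\sigma$ as a product, only $\delta_1$ alone, and that is demonstrably insufficient.
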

The proof of Proposition \ref{prop:firstminima} (which forms the backbone of the whole argument) is postponed to Section \ref{sec:proofofmin}.

Combining (\ref{neweq:estimate3}) and Proposition \ref{prop:firstminima}, we find that
\begin{equation}
\#\left(\Lambda_{\bs{L}}\cap Z\right)\ll_{{m},{n}}\#{I}\left((1+R)^{{m}+{n}-1}+\varepsilon T^{{n}}+\left(\frac{\varepsilon T^{{n}}}{\phi(T)}\right)^{\frac{{m}+{n}-1}{{m}+{n}}}\right).\nonumber
\end{equation}
By (\ref{eq:intersection}), this implies
\begin{equation}
\label{eq:almostlasteq2}
\#M(\bs{L},\varepsilon,R,\bs{T})\ll_{{m},{n}}\#{I}\left((1+R)^{{m}+{n}-1}+\varepsilon T^{{n}}+\left(\frac{\varepsilon T^{{n}}}{\phi(T)}\right)^{\frac{{m}+{n}-1}{{m}+{n}}}\right).
\end{equation}
In view of Proposition \ref{thm:partition}, we also have that $\#{I}\ll_{{m}}\log\left(R/\varepsilon^{1/{m}}\right)^{{m}-1}$. Therefore, if $\varepsilon T^{{n}}/\phi(T)\geq 1$, the required estimate is a straightforward consequence of (\ref{eq:almostlasteq2}). We are then left to prove the claim in the case when $\varepsilon T^{{n}}/\phi(T)<1$. To do this, we rely on the following lemma.

\begin{lem}
\label{lem:emptycase}
Assume that $\varepsilon T^{{n}}/\phi(T)<1$. Then we have that $\Lambda_{\bs{L}}\cap Z\subset C$.
\end{lem}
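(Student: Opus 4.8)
The plan is to show that under the hypothesis $\varepsilon T^n/\phi(T)<1$ the set $\Lambda_{\bs L}\cap Z$ can contain no lattice vector with nonzero $\bs y$-component; since points with $\bs y=\bs 0$ lie in $C$ by definition, this gives $\Lambda_{\bs L}\cap Z\subset C$. So take $\bs v=(\bs x,\bs y)=A_{\bs L}(\bs p,\bs q)\in\Lambda_{\bs L}\cap Z$ with $\bs q\neq\bs 0$, and aim for a contradiction. By definition of $Z$ and $\Lambda_{\bs L}$ we have $x_i=L_i\bs q+p_i$ with $\prod_{i=1}^m|x_i|<\varepsilon$, while $|q_j|\leq T_j$ for $j=1,\dots,n$.

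The key step is to relate $\prod_i|x_i|$ to the left-hand side of the $\phi$-multiplicatively-badly-approximable inequality (\ref{eq:condition1}). First observe that $|x_i|=|L_i\bs q+p_i|\geq\|L_i\bs q\|$ for each $i$, because $p_i\in\mb Z$ and $\|\cdot\|$ is the distance to the nearest integer; hence $\prod_{i=1}^m|x_i|\geq\prod_{i=1}^m\|L_i\bs q\|$. Next, since $\bs q\neq\bs 0$ and $|q_j|\leq T_j$, monotonicity of $x\mapsto x^+$ gives $q_j^+\leq T_j^+=T_j$, so $q_1^+\cdots q_n^+\leq T_1\cdots T_n=T^n$ (writing $T:=\bar T$ as in the proof, i.e. $T^n=T_1\cdots T_n$). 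Because $\phi$ is non-increasing and $(q_1^+\cdots q_n^+)^{1/n}\leq T$, we get $\phi((q_1^+\cdots q_n^+)^{1/n})\geq\phi(T)$. Plugging these into (\ref{eq:condition1}) yields
\[
\varepsilon>\prod_{i=1}^m|x_i|\geq\prod_{i=1}^m\|L_i\bs q\|\geq\frac{\phi\big((q_1^+\cdots q_n^+)^{1/n}\big)}{q_1^+\cdots q_n^+}\geq\frac{\phi(T)}{T^n},
\]
that is $\varepsilon T^n/\phi(T)>1$, contradicting the standing assumption. Therefore no such $\bs v$ exists, every point of $\Lambda_{\bs L}\cap Z$ has $\bs y=\bs 0$, i.e.\ lies in $C$, and the lemma follows.

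I do not expect any serious obstacle here; the only points requiring mild care are: (a) the inequality is strict ($\prod|x_i|<\varepsilon$) so the chain of inequalities comfortably produces a strict contradiction with $\varepsilon T^n/\phi(T)<1$; (b) one must be sure $\phi$ is only ever evaluated at arguments $\geq 0$, which holds since $(q_1^+\cdots q_n^+)^{1/n}\geq 1$; and (c) the direction of monotonicity of $\phi$ must be used correctly — larger argument gives smaller (or equal) value, and since $(q_1^+\cdots q_n^+)^{1/n}\leq\bar T$ we bound $\phi$ from below by $\phi(\bar T)$. Note also that the case $\bs q=\bs 0$ need not be excluded by this argument at all: such vectors are precisely the ones in $\Lambda_{\bs L}\cap C=\mb Z^m\times\{\bs 0\}$, so they automatically satisfy the desired conclusion $\bs v\in C$.
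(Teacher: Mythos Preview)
Your proof is correct and follows essentially the same approach as the paper's own proof: both argue by contradiction, taking a vector $\bs v\in(\Lambda_{\bs L}\cap Z)\setminus C$, and combine the bound $\prod_i|L_i\bs q+p_i|<\varepsilon$, the inequality $q_j^+\leq T_j$, the monotonicity of $\phi$, and condition~(\ref{eq:condition1}) to reach a contradiction with $\varepsilon T^n/\phi(T)<1$. The only cosmetic difference is that the paper arranges the chain of inequalities after multiplying through by $q_1^+\cdots q_n^+$, whereas you divide; the logic is identical.
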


\begin{proof}
Suppose by contradiction that there exists a vector $\bs{v}\in(\Lambda_{\bs{L}}\cap Z)\setminus C$. Then, we can write
$$\bs{v}=(L_{1}\bs{q}+p_{1},\dotsc,L_{{m}}\bs{q}+p_{{m}},\bs{q})$$
for some $\bs{p}\in\mb{Z}^{{m}}$ and $\bs{q}\in\mb{Z}^{{n}}\setminus\{\bs{0}\}$. However, since $\bs{v}\in Z$, we have that
\begin{equation}
q_1^{+}\dotsm q_n^{+}\prod_{i=1}^{{m}}\left\|L_{i}\bs{q}\right\|\leq q_1^{+}\dotsm q_n^{+}\prod_{i=1}^{{m}}\left|L_{i}\bs{q}+p_{i}\right|\leq T^{{n}}\varepsilon<\phi(T)\leq\phi\left(\left(q_1^{+}\dotsm q_n^{+}\right)^{\frac{1}{{n}}}\right),\nonumber
\end{equation}
in contradiction with (\ref{eq:condition1}). This proves the claim.
\end{proof}
By Lemma \ref{lem:emptycase} and (\ref{eq:intersection}), if $\varepsilon T^{{n}}/\phi(T)< 1$, we have that $M(\bs{L},\varepsilon,R,\bs{T})=\emptyset$, and (\ref{eq:cor1}) becomes trivial. Hence, the proof is complete.

\section{Proof of Proposition \ref{prop:firstminima}}
\label{sec:proofofmin}

\subsection{Construction of a Basis}

The goal of this subsection is to construct a "workable" basis for the lattice $\omega_{2}\circ\omega_{1}\circ\tilde{\varphi}_{\beta}(\Lambda_{\bs{L}})$ in order to prove Proposition \ref{prop:firstminima}. We start by recalling the following general result on lattices.

\begin{theorem}[Mahler-Weyl]
\label{theorem:Mahler-Weyl}
Let $d\geq 1$ and let $\Lambda$ be a full-rank lattice in $\mb{R}^{d}$ with successive minima $\delta_{1},\dotsc,\delta_{d}$. Then there exists a basis $\bs{v}^{1},\dotsc,\bs{v}^{d}$ of $\Lambda$ such that
$$\delta_{s}\leq|\bs{v}^{s}|_{2}\leq\max\left\{1,\frac{s}{2}\right\}\delta_{s}$$
for $s=1,\dotsc,d$.
\end{theorem}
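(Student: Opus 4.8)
The plan is to carry out the classical Mahler reduction. First I would fix linearly independent lattice points $\bs u^1,\dots,\bs u^d\in\Lambda$ realising the successive minima, $|\bs u^s|_2=\delta_s$, ordered so that $\delta_1\le\cdots\le\delta_d$, and set $V_s:=\mathrm{span}_{\mb R}(\bs u^1,\dots,\bs u^s)$ and $\Lambda_s:=\Lambda\cap V_s$. Since $\Lambda_s\cap V_{s-1}=\Lambda_{s-1}$, the sublattice $\Lambda_{s-1}$ is primitive in $\Lambda_s$; as $\Lambda_s$ has rank $s$, the quotient $\Lambda_s/\Lambda_{s-1}$ is infinite cyclic and every basis of $\Lambda_{s-1}$ extends to a basis of $\Lambda_s$. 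I would then build $\bs v^1,\dots,\bs v^d$ by induction so that $\bs v^1,\dots,\bs v^s$ is a basis of $\Lambda_s$ for each $s$: given $\bs v^1,\dots,\bs v^{s-1}$, extend it by some $\bs w\in\Lambda_s$ and size-reduce $\bs w$ against the Gram--Schmidt vectors $\bs v^{*1},\dots,\bs v^{*(s-1)}$ of $\bs v^1,\dots,\bs v^{s-1}$ — subtracting in turn integer multiples of $\bs v^{s-1},\bs v^{s-2},\dots,\bs v^1$ — to obtain $\bs v^s$ with $|\langle\bs v^s,\bs v^{*i}\rangle|\le\tfrac12|\bs v^{*i}|_2^2$ for all $i<s$. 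This modification preserves the property that $\{\bs v^1,\dots,\bs v^s\}$ is a basis of $\Lambda_s$ and keeps $\bs v^s\in V_s$; taking $s=d$ yields a basis of $\Lambda=\Lambda_d$.

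For the upper estimate I would expand $|\bs v^s|_2^2=|\bs v^{*s}|_2^2+\sum_{i<s}\mu_{s,i}^2|\bs v^{*i}|_2^2$ with $|\mu_{s,i}|\le\tfrac12$. The crucial input is the bound $|\bs v^{*i}|_2\le\delta_i$: since $\bs v^i$ generates $\Lambda_i/\Lambda_{i-1}$, every $\bs z\in\Lambda_i\setminus V_{i-1}$ has orthogonal component off $V_{i-1}$ equal to a non-zero integer multiple of $\bs v^{*i}$, whence $|\bs v^{*i}|_2=\mathrm{dist}(\bs v^i,V_{i-1})\le\mathrm{dist}(\bs u^i,V_{i-1})\le|\bs u^i|_2=\delta_i$. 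Consequently $|\bs v^s|_2^2\le\delta_s^2+\tfrac14(s-1)\delta_s^2=\tfrac{s+3}{4}\,\delta_s^2$, and $\tfrac{\sqrt{s+3}}{2}\le\max\{1,s/2\}$ already for $s=1$ and for every $s\ge 3$ (there $s^2\ge s+3$). The exceptional case $s=2$ requires the sharper bound $|\bs v^2|_2\le\delta_2$, which I would obtain by splitting $\bs u^2$ according to its ``layer'' $k\ge 1$ in $\Lambda_2/\Lambda_1$: if $k=1$ then $\bs u^2\in\pm\bs v^2+\mb Z\bs v^1$ and, because the one-dimensional size reduction makes $\bs v^2$ the shortest vector of its coset modulo $\mb Z\bs v^1$, we get $|\bs v^2|_2\le|\bs u^2|_2=\delta_2$; if $k\ge 2$ then $|\bs v^{*2}|_2=\tfrac1k\mathrm{dist}(\bs u^2,V_1)\le\tfrac12\delta_2$, so $|\bs v^2|_2^2\le\tfrac14\delta_2^2+\tfrac14\delta_1^2\le\tfrac12\delta_2^2$.

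Finally, I would re-index the basis so that $|\bs v^1|_2\le\cdots\le|\bs v^d|_2$. Since $j\mapsto\max\{1,j/2\}\delta_j$ is non-decreasing and each basis vector $\bs v^j$ (before re-indexing) satisfies $|\bs v^j|_2\le\max\{1,j/2\}\delta_j$, any basis vector of norm exceeding $\max\{1,s/2\}\delta_s$ must carry an index $>s$; hence at most $d-s$ of them do, so at least $s$ have norm at most $\max\{1,s/2\}\delta_s$ and the bound survives the re-indexing. The lower bound $\delta_s\le|\bs v^s|_2$ is then automatic, because $\bs v^1,\dots,\bs v^s$ are $s$ linearly independent lattice vectors of norm at most $|\bs v^s|_2$, so $B(0,|\bs v^s|_2)$ contains $s$ independent lattice points.

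The main obstacle I anticipate is pinning down the constant in the length estimate: the generic size-reduced bound $\sqrt{(s+3)/4}\,\delta_s$ falls just short of $\delta_s$ precisely at $s=2$, so that case must be treated separately via the layer decomposition above; some care is also needed to invoke correctly the primitivity of $\Lambda_{s-1}$ in $\Lambda_s$ and the resulting cyclic quotient, which is what licenses both the basis extension in the inductive step and the identity $|\bs v^{*i}|_2=\mathrm{dist}(\bs v^i,V_{i-1})$.
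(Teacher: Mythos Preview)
The paper does not actually give a proof of this statement; it merely records that the theorem ``is a consequence of \cite[Chapter VIII, Lemma 1]{Cas97} and \cite[Chapter V, Lemma 8]{Cas97}.'' Your proposal, by contrast, supplies a complete self-contained argument, and it is correct: the inductive construction of a basis of each $\Lambda_s=\Lambda\cap V_s$ via the cyclic quotients $\Lambda_s/\Lambda_{s-1}$, the size-reduction step, the key inequality $|\bs v^{*i}|_2\le\delta_i$ obtained from the layer structure, the separate treatment of $s=2$, and the final re-indexing all go through as you describe. This is precisely the classical Mahler reduction that underlies the two lemmas in Cassels the paper cites, so your approach is not different in spirit---you have simply written out what the paper leaves to the reference.
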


\noindent Theorem \ref{theorem:Mahler-Weyl} is a consequence of \cite[Chapter VIII, Lemma 1]{Cas97} and \cite[Chapter V, Lemma 8]{Cas97}.

For $d\geq 1$ and $\bs v\in\mb R^d$ let
$$\mathfrak{s}\left(\bs{v}\right):=\left\{h\in\{1,\dotsc,d\}:v_{h}\neq 0\right\}.$$
This notation will be used throughout the rest of the paper. From Theorem \ref{theorem:Mahler-Weyl}, we deduce the following.

\begin{lem}
\label{lem:monotonicity}
Let $\Lambda$ be a full rank lattice in $\mb{R}^{d}$ and let $\delta_{1},\dotsc,\delta_{d}$ be its successive minima. Then there exists a basis $\bs{v}^{1},\dotsc,\bs{v}^{d}$ of the lattice $\Lambda$ such that\vspace{2mm}
\begin{itemize}
\item[$i)$] $|\bs{v}^{s}|_{2}\ll_{d}\delta_{s}$ for $s=1,\dotsc,d$;\vspace{2mm}
\item[$ii)$] $\mathfrak{s}\left(\bs{v}^{s}\right)\subseteq \mathfrak{s}\left(\bs{v}^{s+1}\right)$ for $s=1,\dotsc,d-1$.\vspace{2mm}
\end{itemize}
\end{lem}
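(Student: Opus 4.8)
The plan is to start from a Mahler--Weyl basis $\bs{u}^1,\dots,\bs{u}^d$ as provided by Theorem~\ref{theorem:Mahler-Weyl}, which already satisfies item $i)$ after absorbing the factor $\max\{1,s/2\}$ into the $\ll_d$ constant, and then to modify it by a unimodular triangular transformation so as to achieve the nesting property $ii)$ for the supports $\mathfrak{I}(\bs{v}^s)$, all the while preserving $i)$ up to a $d$-dependent constant. The key observation is that replacing $\bs{u}^s$ by $\bs{u}^s + k\,\bs{u}^r$ for some $r<s$ and $k\in\mb Z$ (an admissible basis move) can only enlarge or leave unchanged the support, in the sense that $\mathfrak{I}(\bs{u}^s + k\,\bs{u}^r)\subseteq \mathfrak{I}(\bs{u}^s)\cup\mathfrak{I}(\bs{u}^r)$, and that adding several earlier vectors with $\{0,1\}$ coefficients changes the Euclidean norm by at most a factor $d$ since $|\bs{u}^r|_2\ll_d \delta_r\le\delta_s\ll_d|\bs{u}^s|_2$ for $r\le s$ (the successive minima are non-decreasing).

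The construction is inductive on $s$. Suppose $\bs{v}^1,\dots,\bs{v}^{s}$ have been built with nested supports and $|\bs{v}^t|_2\ll_d\delta_t$ for $t\le s$; take $\bs{v}^{s+1}:=\bs{u}^{s+1}$ as a first attempt. If $\mathfrak{I}(\bs{v}^{s})\subseteq\mathfrak{I}(\bs{v}^{s+1})$ we are done with this step. Otherwise there is a coordinate $h\in\mathfrak{I}(\bs{v}^{s})\setminus\mathfrak{I}(\bs{v}^{s+1})$; I would like to ``fill in'' that coordinate of $\bs{v}^{s+1}$ using the earlier vectors. Since $\bs{v}^1,\dots,\bs{v}^{s}$ together with $\bs{u}^{s+1},\dots,\bs{u}^d$ span $\Lambda$, and in particular the first $s+1$ of them, $\bs{v}^1,\dots,\bs{v}^s,\bs{v}^{s+1}$, span a primitive sublattice of rank $s+1$, I can add to $\bs{v}^{s+1}$ an integer combination $\sum_{t\le s}k_t\bs{v}^t$ without destroying the basis property of the whole system. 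The point is to choose the $k_t$ so that every coordinate $h$ lying in $\bigcup_{t\le s}\mathfrak{I}(\bs{v}^t)=\mathfrak{I}(\bs{v}^s)$ ends up in the support of the new $\bs{v}^{s+1}$; a single generic choice of the $k_t$ (e.g. all equal to $1$, or a small perturbation thereof if accidental cancellations occur) will do, since the bad choices lie in a finite union of proper affine subspaces. After this adjustment $\mathfrak{I}(\bs{v}^{s+1})\supseteq\mathfrak{I}(\bs{v}^s)$, the nesting holds up to index $s+1$, and the norm bound $|\bs{v}^{s+1}|_2\le|\bs{u}^{s+1}|_2+\sum_{t\le s}|k_t|\,|\bs{v}^t|_2\ll_d\delta_{s+1}$ is preserved because each $|\bs{v}^t|_2\ll_d\delta_t\le\delta_{s+1}$ and the $k_t$ can be taken bounded in terms of $d$ alone (indeed one may keep them in $\{0,1\}$ if one only needs to hit coordinates one at a time and iterates the move). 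Running this from $s=1$ up to $s=d-1$ yields the desired basis.

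\textbf{Main obstacle.} The delicate point is to guarantee simultaneously that (a) the support genuinely becomes nested and (b) the system remains a \emph{basis} of $\Lambda$, not merely a generating set of a finite-index sublattice. Admissible column operations of the form $\bs{v}^{s+1}\mapsto\bs{v}^{s+1}+\sum_{t\le s}k_t\bs{v}^t$ with $k_t\in\mb Z$ are exactly the moves that preserve the full lattice, so (b) is automatic as long as I only ever add \emph{earlier} basis vectors to \emph{later} ones (a unipotent lower-triangular change of basis). The real work is in (a): one must check that the coordinates one wants to switch on cannot all be forced to vanish for every integer choice of the $k_t$. This is where primitivity of $\Lambda$ enters — if some coordinate $h\in\mathfrak I(\bs v^s)$ could not be activated in $\bs v^{s+1}$ by any such combination, then that coordinate would vanish identically on the rank-$(s+1)$ sublattice spanned by $\bs v^1,\dots,\bs v^{s+1}$ yet be nonzero on $\bs v^s$, which is a contradiction. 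Once this is in place, a standard ``avoid finitely many hyperplanes'' argument produces a single admissible choice making \emph{all} the required coordinates nonzero at once, and the norm estimate comes for free from monotonicity of the successive minima. I expect the write-up to be short, with the only subtlety being a clean statement of why the triangular moves suffice to reach nested supports.
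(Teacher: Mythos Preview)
Your approach is correct and essentially the same as the paper's: start from a Mahler--Weyl basis and perform a unipotent lower-triangular change of basis with integer coefficients bounded in terms of $d$ to force the supports to nest, using monotonicity of the successive minima for the norm bound. The paper streamlines your construction by noting that, once $\mathfrak{I}(\bs{v}^1)\subseteq\cdots\subseteq\mathfrak{I}(\bs{v}^s)$ is already in place, it suffices to add a single multiple $c_{s+1}\bs{v}^s$ of the \emph{immediately previous} vector with $c_{s+1}\in\{0,1,\dots,d\}$ (each coordinate $h\in\mathfrak{I}(\bs{v}^s)$ rules out at most one value of $c_{s+1}$), which replaces your generic hyperplane-avoidance step; incidentally, your appeal to ``primitivity of $\Lambda$'' is a red herring---the only fact used is that $h\in\mathfrak{I}(\bs{v}^s)$ means $v^s_h\neq 0$, so the relevant affine function of the $k_t$ is non-constant.
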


\begin{proof}
By Theorem \ref{theorem:Mahler-Weyl}, there exists a basis $\bs{v}^{1},\dotsc,\bs{v}^{d}$ of $\Lambda$, such that $|\bs{v}^{s}|_{2}\ll_{d}\delta_{s}$ for $s=1,\dotsc,d$. We set
\begin{equation}
\tilde{\bs{v}}^{s}:=\begin{cases}
\bs{v}^{1} & \quad\text{if }s=1 \\
\bs{v}^{s}+c_{s}\tilde{\bs{v}}^{s-1} & \quad\text{if }s>1
\end{cases},\nonumber
\end{equation}
where $c_{s}\in\mb{Z}$ are some coefficients yet to be chosen. Clearly, the vectors $\tilde{\bs{v}}^{1},\dotsc,\tilde{\bs{v}}^{d}$ form a basis of the lattice $\Lambda$ independently of the choice that we make for the coefficients $c_{s}$. We define these coefficients by recursion on $s$. Suppose that $\tilde{\bs{v}}^{s}$ are defined for all $s<\sigma $, where $1<\sigma \leq d$. To define $\tilde{\bs{v}}^{\sigma }$, we use the following procedure. First, we set $c_{\sigma }:=0$. If $\mathfrak{s}\left(\tilde{\bs{v}}^{\sigma -1}\right)\subset \mathfrak{s}\left(\bs{v}^{\sigma }\right)$, there is nothing to prove. If $\mathfrak{s}\left(\tilde{\bs{v}}^{\sigma -1}\right)\not\subset \mathfrak{s}\left(\bs{v}^{\sigma }\right)$, we change the value of the coefficient $c_{\sigma }$ to $1$. Then, if $\mathfrak{s}\left(\tilde{\bs{v}}^{\sigma -1}\right)\subset \mathfrak{s}\left(\bs{v}^{\sigma }+\tilde{\bs{v}}^{\sigma -1}\right)$, we have found a suitable value for the coefficient $c_{\sigma }$, otherwise, it means that the vectors $\tilde{\bs{v}}^{\sigma -1}$ and $\bs{v}^{\sigma }$ have some non-zero component of equal modulus but opposite sign. If this happens, we set $c_{\sigma }:=2$. Then either we have found a suitable value for the coefficient $c_{\sigma }$, or $\mathfrak{s}\left(\tilde{\bs{v}}^{\sigma -1}\right)\not\subset \mathfrak{s}\left(\bs{v}^{\sigma }+2\tilde{\bs{v}}^{\sigma -1}\right)$. This implies that some of the non-zero components of the vector $\bs{v}^{\sigma }$ are twice the same components of the vector $\tilde{\bs{v}}^{\sigma -1}$, up to a change of sign. If this is the case, we set $c_{\sigma }:=3$, and so on. As soon as $\mathfrak{s}\left(\tilde{\bs{v}}^{\sigma -1}\right)\subset \mathfrak{s}\left(\bs{v}^{\sigma }+c_{\sigma }\tilde{\bs{v}}^{\sigma -1}\right)$, we fix the value of the coefficient $c_{\sigma }$. Each non-zero component of the vector $\bs{v}^{\sigma }$ can exclude at most one value for the coefficient $c_{\sigma }$. Hence, the process terminates in at most $d+1$ steps.

To conclude, we show by recursion on $s$ that $\left|\bs{v}^{s}\right|\ll_{d}\delta_{s}$ for $s=1,\dotsc,d$. Suppose that this is true for all the indices less than a fixed index $\sigma >1$. Then, we have that
$$\left|\tilde{\bs{v}}^{\sigma }\right|_{2}=\left|\bs{v}^{\sigma -1}+c_{\sigma }\tilde{\bs{v}}^{\sigma -1}\right|_{2}\leq\left|\bs{v}^{\sigma }\right|_{2}+(d+1)\left|\tilde{\bs{v}}^{\sigma -1}\right|_{2}\ll_{d}\delta_{\sigma }+(d+1)\delta_{\sigma -1}\ll_{d}\delta_{\sigma },$$
and this completes the proof. 
\end{proof}

By Lemma \ref{lem:monotonicity}, there exists a basis $\bs{v}^{1},\dotsc,\bs{v}^{{m}+{n}}$ of the lattice $\omega_{2}\circ\omega_{1}\circ\tilde{\varphi}_{\beta}(\Lambda_{\bs{L}})$ such that
\begin{equation}
\label{eq:parti}
|\bs{v}^{s}|_{2}\ll_{{m}+{n}}\delta_{s}\mbox{ for }s=1,\dotsc,{m}+n,
\end{equation}
and
\begin{equation}
\label{eq:partii}
\mathfrak{s}\left(\bs{v}^{s}\right)\subseteq \mathfrak{s}\left(\bs{v}^{s+1}\right)\mbox{ for }s=1,\dotsc,{m}+{n}-1.
\end{equation}
Moreover, by definition of the maps $\omega_1,\omega_2,$ and $\varphi_{\beta}$ (see Proposition \ref{thm:partition}), we can write
\begin{equation}
\label{eq:partiii}
\bs{v}^{s}=\left(\theta e^{a_{\beta,1}}\left(L_{1}\bs{q}^{s}+p^{s}_{1}\right),\dotsc,\theta e^{a_{\beta,m}}\left(L_{{m}}\bs{q}^{s}+p^{s}_{{m}}\right),\theta^{-\frac{{m}}{{n}}}\frac{T}{T_{1}}q^{s}_{1},\dotsc,\theta^{-\frac{{m}}{{n}}}\frac{T}{T_{{n}}}q^{s}_{{n}}\right)
\end{equation}
for some fixed $\bs{p}^{s}\in\mb{Z}^{{m}}$ and $\bs{q}^{s}\in\mb{Z}^{{n}}$. Then, from (\ref{eq:parti}) and (\ref{eq:partiii}) we deduce that
\begin{multline}
\label{eq:length}
\delta_{s}\gg_{{m},{n}}|\bs{v}^{s}|_{2}=\Bigg(\theta^{2} e^{2a_{\beta,1}}\left(L_{1}\bs{q}^{s}+p^{s}_{1}\right)^{2}+\dotsb+\theta^{2} e^{2a_{\beta,m}}\left(L_{{m}}\bs{q}^{s}+p^{s}_{{m}}\right)^{2}+ \\
\left. +\ \theta^{-\frac{2{m}}{{n}}}\frac{T^{2}}{T_{1}^{2}}\left(q^{s}_{1}\right)^{2}+\dotsb+\theta^{-\frac{2{m}}{{n}}}\frac{T^{2}}{T^{2}_{{n}}}\left(q^{s}_{{n}}\right)^{2}\right)^{\frac{1}{2}}
\end{multline}
for $s=1,\dotsc,{m}+{n}$.

The following lemma shows that, without loss of generality, we can additionally assume that $\bs{q}^{s}\neq\bs{0}$ for all $s=1,\dotsc,{m}+{n}$.

\begin{lem}
\label{lem:qneq0}
Suppose that $\bs{q}^{\sigma_{0} }=\bs{0}$ for some $1\leq \sigma_{0} \leq {m}+{n}$. Then we have that
\begin{equation}
\frac{\left(\varepsilon T^{{n}}\right)^{\frac{s}{{m}+{n}}}}{\delta_{1}\dotsm\delta_{s}}\ll_{{m},{n}}R^{s}\nonumber
\end{equation}
for $s=1,\dotsc,{m}+{n}$.
\end{lem}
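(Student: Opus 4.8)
The plan is to exploit the hypothesis $\bs{q}^{\sigma_0}=\bs{0}$ together with the nesting property (\ref{eq:partii}) to show that the first $\sigma_0$ basis vectors $\bs{v}^1,\dotsc,\bs{v}^{\sigma_0}$ all have vanishing $\bs y$-component, hence lie in the sublattice $\omega_2\circ\omega_1\circ\tilde\varphi_\beta(\Lambda_{\bs L}\cap C)$. Indeed, since $\mathfrak{I}(\bs{v}^s)\subseteq\mathfrak{I}(\bs{v}^{s+1})$, for every $s\leq\sigma_0$ the support of $\bs v^s$ is contained in that of $\bs v^{\sigma_0}$; and because the coordinates $m+1,\dotsc,m+n$ of $\bs v^{\sigma_0}$ are (up to the nonzero scalars $\theta^{-m/n}T/T_j$) exactly the entries of $\bs q^{\sigma_0}=\bs 0$, those coordinates vanish identically in $\bs v^{\sigma_0}$ and therefore in $\bs v^1,\dotsc,\bs v^{\sigma_0}$ as well. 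By Lemma \ref{lem:LambdacapC}'s underlying observation ($\Lambda_{\bs L}\cap C=\mb Z^m\times\{\bs 0\}$, equivalently: $L_i\bs q+p_i=0$ forces $\bs q=\bs 0$), a lattice vector with null $\bs y$-part lies in $\mb Z^m\times\{\bs 0\}$.

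Next I would estimate the successive minima $\delta_1,\dotsc,\delta_{\sigma_0}$. For $s\leq\sigma_0$, the vector $\bs v^s$ has the form $(\theta e^{a_{\beta,1}}p_1^s,\dotsc,\theta e^{a_{\beta,m}}p_m^s,\bs 0)$ with $\bs p^s\in\mb Z^m$, and $\bs v^1,\dotsc,\bs v^{\sigma_0}$ are linearly independent, so in particular $\sigma_0\leq m$. Using (\ref{eq:length}) and $\varphi_\beta(X_\beta)\subset[-c\varepsilon^{1/m},c\varepsilon^{1/m}]^m$ — more precisely the lower bound $e^{a_{\beta,i}}\gg_m\varepsilon^{1/m}/R$ from $iia)$ of Proposition \ref{thm:partition} — together with $\theta=(\varepsilon T^n)^{1/(m+n)}/\varepsilon^{1/m}$, each nonzero integer $p_i^s$ contributes at least $\theta e^{a_{\beta,i}}\gg_m\theta\varepsilon^{1/m}/R=(\varepsilon T^n)^{1/(m+n)}/R$ to $|\bs v^s|_2$. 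Hence $\delta_s\gg_{m,n}|\bs v^s|_2\gg_{m,n}(\varepsilon T^n)^{1/(m+n)}/R$ for every $s\leq\sigma_0$; and since $\delta_s\gg_{m,n}(\varepsilon T^n)^{1/(m+n)}/R$ also holds for all $s$ by monotonicity of the $\delta_s$ (as $\delta_1\leq\dotsb\leq\delta_{m+n}$), we get the uniform bound $\delta_1\dotsm\delta_s\gg_{m,n}\big((\varepsilon T^n)^{1/(m+n)}/R\big)^s$ for $s=1,\dotsc,m+n$.

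Combining this with the numerator yields
$$
\frac{(\varepsilon T^n)^{s/(m+n)}}{\delta_1\dotsm\delta_s}\ll_{m,n}\frac{(\varepsilon T^n)^{s/(m+n)}}{\big((\varepsilon T^n)^{1/(m+n)}/R\big)^s}=R^s,
$$
which is exactly the claim of Lemma \ref{lem:qneq0}. I expect the only delicate point to be justifying that the whole initial segment $\bs v^1,\dotsc,\bs v^{\sigma_0}$ — not merely $\bs v^{\sigma_0}$ itself — has zero $\bs y$-component; this is where the nesting property (\ref{eq:partii}) is essential, and it is worth spelling out that $\mathfrak I(\bs v^s)\subseteq\mathfrak I(\bs v^{\sigma_0})$ and that the last $n$ coordinates are absent from $\mathfrak I(\bs v^{\sigma_0})$ precisely because $\bs q^{\sigma_0}=\bs 0$. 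The successive-minima estimate afterwards is then routine, using only $iia)$ of Proposition \ref{thm:partition} and the definition of $\theta$, and does not require the $\phi$-multiplicative-bad-approximability hypothesis at all (that hypothesis enters only in the complementary case $\bs q^s\neq\bs 0$ treated separately).
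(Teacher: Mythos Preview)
Your proposal is correct and follows essentially the same argument as the paper: use the nesting property (\ref{eq:partii}) to deduce $\bs q^s=\bs 0$ for all $s\leq\sigma_0$, hence $\bs p^s\neq\bs 0$, and then bound $\delta_s\geq\delta_1\gg_{m,n}\theta\min_i e^{a_{\beta,i}}\gg_{m,n}(\varepsilon T^n)^{1/(m+n)}/R$ via part $iia)$ of Proposition~\ref{thm:partition} and the definition of $\theta$. Your additional observations (e.g.\ that $\sigma_0\leq m$, and that the $\phi$-hypothesis is not needed here) are correct but not strictly required for the argument.
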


\begin{proof}
By (\ref{eq:partii}), we have that $\bs{q}^{s}=\bs{0}$ for all $s\leq \sigma_{0} $. It follows that $\bs{p}^{s}\neq\bs{0}$ for all $s\leq \sigma_{0} $. Hence, by part $iia)$ of Proposition \ref{thm:partition}, we deduce that
$$\delta_{s}\geq\delta_{1}\geq\theta\min_{i}e^{a_{\beta,i}}\gg_{{m},{n}}\frac{\left(\varepsilon T^{{n}}\right)^{\frac{1}{{m}+{n}}}}{\varepsilon^{\frac{1}{{m}}}}\frac{\varepsilon^{\frac{1}{{m}}}}{R}=\frac{1}{R}\left(\varepsilon T^{{n}}\right)^{\frac{1}{{m}+{n}}}$$
for all $s=1,\dotsc,{m}+{n}$. The claim follows directly from this inequality.
\end{proof}

Now, given that $\bs{q}^{s}\neq\bs{0}$ for all $s=1,\dotsc,{m}+{n}$, we deduce that $v^{s}_{i}=\theta e^{a_{\beta,i}}(L_{i}\bs{q}+p_{i})\neq 0$ for all $i=1,\dotsc,{m}$, since the entries of the row $L_{i}$ of the matrix $\bs{L}$ along with the integer $1$ are linearly independent over $\mb{Z}$ for $i=1,\dotsc,{m}$. Hence, in view of Lemma \ref{lem:qneq0}, we can make the assumption that
\begin{equation}
\label{eq:partiv}
\{1,\dotsc,{m}\}\subsetneq\mathfrak{s}\left(\bs{v}^{s}\right)
\end{equation}
for all $s=1,\dotsc,{m}+{n}$.

To conclude this subsection, we show that, by conveniently permuting the variables $y_{1},\dotsc ,y_{{n}}$, we can further assume that the sets $\mathfrak{s}(\bs{v}^{s})$ have a nice "triangular" structure. Here and in the sequel, we denote vectors in $\mb R^{m+n}$ by $(\bs x,\bs y)$, where $\bs x\in\mb R^{m}$ and $\bs y\in\mb R^{n}$.

\begin{lem}
\label{lem:monotonicity2}
There exists a permutation of the variables $y_{1},\dotsc,y_{{n}}$ such that for all the indices $s,l\in\{1,\dotsc,{m}+{n}\}$ we have that
\begin{equation}
\label{eq:partv}
l\in\mathfrak{s}\left(\bs{v}^{s}\right)\Rightarrow\{1,\dotsc,l\}\subset\mathfrak{s}\left(\bs{v}^{s}\right).
\end{equation}
\end{lem}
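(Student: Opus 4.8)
The plan is to produce the desired permutation of the coordinates $y_1,\dots,y_n$ by exploiting the nesting property \eqref{eq:partii}, namely $\mathfrak I(\bs v^s)\subseteq\mathfrak I(\bs v^{s+1})$, together with the fact \eqref{eq:partiv} that $\{1,\dots,m\}\subsetneq\mathfrak I(\bs v^s)$ for every $s$. Because of the nesting, the sets $\mathfrak I(\bs v^1)\subseteq\dots\subseteq\mathfrak I(\bs v^{m+n})$ form an increasing chain of subsets of $\{1,\dots,m+n\}$, each containing the full block $\{1,\dots,m\}$. Restricting attention to the last $n$ coordinates, the sets $J_s:=\mathfrak I(\bs v^s)\cap\{m+1,\dots,m+n\}$ form an increasing chain of subsets of an $n$-element set; hence there are at most $n$ distinct proper-growth steps among them, and the chain is totally ordered by inclusion.

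The key step is then to choose the permutation so that each $J_s$ becomes an initial segment of $\{m+1,\dots,m+n\}$. Concretely, I would define a linear order on the index set $\{m+1,\dots,m+n\}$ by declaring $l\prec l'$ whenever the smallest $s$ with $l\in\mathfrak I(\bs v^s)$ is strictly less than the smallest $s$ with $l'\in\mathfrak I(\bs v^s)$ (breaking ties arbitrarily); this is a well-defined total order precisely because the chain $\{\mathfrak I(\bs v^s)\}_s$ is nested, so ``$l$ enters no later than $l'$'' is a transitive, total relation. Relabel the last $n$ coordinates according to this order, so that the coordinate entering earliest becomes $m+1$, the next $m+2$, and so on. After this relabelling, for every $s$ the set $\mathfrak I(\bs v^s)$ consists of $\{1,\dots,m\}$ together with an initial segment $\{m+1,\dots,m+k_s\}$ of the remaining indices, which is exactly the ``triangular'' statement \eqref{eq:partv}: if $l\in\mathfrak I(\bs v^s)$ then every $l'\le l$ also lies in $\mathfrak I(\bs v^s)$.

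To write this cleanly I would verify \eqref{eq:partv} directly from the construction: suppose $l\in\mathfrak I(\bs v^s)$. If $l\le m$ there is nothing to prove by \eqref{eq:partiv}. If $l>m$, let $l'$ satisfy $m<l'\le l$; by the choice of relabelling $l'$ enters the chain no later than $l$, i.e. the minimal index $s'$ with $l'\in\mathfrak I(\bs v^{s'})$ satisfies $s'\le s$, and then by the nesting \eqref{eq:partii} we get $l'\in\mathfrak I(\bs v^{s'})\subseteq\mathfrak I(\bs v^s)$. Hence $\{1,\dots,l\}\subseteq\mathfrak I(\bs v^s)$, as required. One should note that permuting the variables $y_1,\dots,y_n$ amounts to permuting the corresponding columns of $\bs L$ and the entries of $\bs T$, which changes neither $\#M(\bs L,\varepsilon,R,\bs T)$ nor the hypothesis \eqref{eq:condition1} nor the quantities $\delta_1,\dots,\delta_{m+n}$ and $(\varepsilon T^n)^{s/(m+n)}$ appearing in Proposition~\ref{prop:firstminima}, so the reduction is harmless.

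The only mild subtlety — and the place where care is needed rather than any real difficulty — is checking that the relation ``$l$ enters no later than $l'$'' is genuinely a total preorder, which rests entirely on the chain $\mathfrak I(\bs v^1)\subseteq\dots\subseteq\mathfrak I(\bs v^{m+n})$ being linearly ordered by inclusion; this is immediate from \eqref{eq:partii}. Everything else is bookkeeping about initial segments. I do not anticipate any serious obstacle here; the lemma is a purely combinatorial normalization of the basis already produced by Lemma~\ref{lem:monotonicity}.
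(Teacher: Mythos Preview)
Your proposal is correct and takes essentially the same approach as the paper: both order the $y$-coordinates by their ``time of first appearance'' in the nested chain $\mathfrak I(\bs v^1)\subseteq\dots\subseteq\mathfrak I(\bs v^{m+n})$, the paper by grouping indices according to the successive differences $R_s\setminus R_{s-1}$ and you by directly defining the order via the minimal $s$ at which each index enters. Your added remark that the permutation leaves all the relevant quantities in Proposition~\ref{prop:firstminima} unchanged is a useful observation that the paper leaves implicit.
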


\begin{proof}
By (\ref{eq:partiv}) we can assume that
$$\{1,\dotsb,{m}\}\subsetneq \mathfrak{s}\left(\bs{v}^{s}\right)$$
for all $s=1,\dotsc,{m}+{n}$. Let us consider the sets
$$\mf{r}_{s}:=\mathfrak{s}\left(\bs{v}^{s}\right)\setminus\left\{1,\dotsc,{m}\right\}\neq\emptyset$$
for $s=1,\dotsc,{m}+{n}$, and let $\mf{r}_{0}:=\emptyset$. Let also $r_{s}:=\# R_{s}$ for $s=0,\dotsc,{m}+{n}$. To define the required permutation, we send the variables in the set $\{y_{j}:{m}+j\in \mf{r}_{s}\setminus \mf{r}_{s-1}\}$ to the variables in the set $\{y_{r_{s-1}+1},\dotsc,y_{r_{s}}\}$ for $s=1,\dotsc,{m}+{n}$, i.e., we reorder the variables so that the null components of each basis vector are the ones with higher indices. This simple procedure delivers the required result.
\end{proof}

Let ${m}+1\leq \sigma\leq{m}+{n}$. Since the vectors $\bs{v}^{1},\dotsc,\bs{v}^{\sigma}$ cannot all lie in the subspace $\{y_{\sigma-{m}}=\dotsb =y_{{n}}=0\}$ (which has dimension $\sigma-1$), there must be an index $s\leq \sigma$ such that one among the components $v^{s}_{\sigma},\dotsc,v^{s}_{{m}+{n}}$ of the vector $\bs{v}^{s}$ is non-zero. By (\ref{eq:partii}), we can take $s=\sigma$. Therefore, from (\ref{eq:partv}), we deduce that $v^{\sigma}_{{m}+1},\dotsc,v^{\sigma}_{\sigma}\neq 0$, and hence $\{1,\dotsc,\sigma\}\subset \mathfrak{s}\left(\bs{v}^{\sigma}\right)$. In view of this and of (\ref{eq:partiv}), we can assume that
\begin{equation}
\label{eq:partvi}
\{1,\dotsc,\max\{{m}+1,\sigma\}\}\subset\mathfrak{s}\left(\bs{v}^{\sigma}\right)
\end{equation}
for all $\sigma=1,\dotsc,{m}+{n}$.
Now, we have a sufficiently "nice" basis of the lattice $\omega_{2}\circ\omega_{1}\circ\tilde{\varphi}_{\beta}(\Lambda_{\bs{L}})$ and we may proceed to prove Proposition \ref{prop:firstminima}.

\subsection{Proof}

Throughout this section we fix and index $\beta\in{I}$ and a basis $\{\bs{v}^{1},\dotsc,\bs{v}^{{m}+{n}}\}$ of the lattice $\omega_{2}\circ\omega_{1}\circ\tilde{\varphi}_{\beta}(\Lambda_{\bs{L}})$ satisfying (\ref{eq:parti}), (\ref{eq:partii}), (\ref{eq:partiii}), (\ref{eq:partv}), and (\ref{eq:partvi}). Note that (\ref{eq:partvi}) implies that $q^{s}_{1}\neq 0$ for all $s=1,\dotsc,{m}+{n}$.

As mentioned in the Introduction, the main obstacle is represented by the null components of the basis vectors $\bs{v}^{1},\dotsc,\bs{v}^{{m}+{n}}$. We start by presenting an instructive "naive approach" to the problem, that shows how estimates for the first minimum only are not sufficient to conclude.  

\begin{lem}
\label{lem:sleqM}
Let $1\leq h_{1}\leq{n}$ be the largest index such that $q^{1}_{h_{1}}\neq 0$. Then, for all $\sigma =1,\dotsc,{m}+{n}$ we have that
\begin{equation}
\label{eq:sleqM}
\frac{\left(\varepsilon T^{{n}}\right)^{\frac{\sigma }{{m}+{n}}}}{\delta_{1}\dotsm\delta_{\sigma }}\ll_{{m},{n}}1+\left(\frac{\varepsilon T^{{n}}}{\phi\left(T\right)}\right)^{\frac{\sigma }{{m}+h_{1}}}.
\end{equation}
\end{lem}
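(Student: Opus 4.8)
The plan is to bound the product $\delta_1\dotsm\delta_\sigma$ from below using the ``workable'' basis $\bs v^1,\dotsc,\bs v^{m+n}$ constructed in the preliminary subsection, exploiting the structure conditions (\ref{eq:parti})--(\ref{eq:partvi}). By (\ref{eq:parti}) it suffices to bound $|\bs v^1|_2\dotsm|\bs v^\sigma|_2$ from below. The key observation is that since $h_1$ is the largest index with $q^1_{h_1}\neq 0$, condition (\ref{eq:partv}) forces $\mathfrak I(\bs v^1)\supseteq\{1,\dotsc,m\}\cup\{m+1,\dotsc,m+h_1\}$, so $\bs v^1$ has exactly (at least) $m+h_1$ nonzero coordinates among the first $m+h_1$, namely the $m$ ``$\bs x$-coordinates'' $\theta e^{a_{\beta,i}}(L_i\bs q^1+p^1_i)$ and the $h_1$ ``$\bs y$-coordinates'' $\theta^{-m/n}(T/T_j)q^1_j$ for $j=1,\dotsc,h_1$.

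The heart of the argument is a single application of the arithmetic--geometric mean inequality to $|\bs v^1|_2^2$, restricted to these $m+h_1$ nonzero coordinates: this gives
\begin{equation}
|\bs v^1|_2^2 \gg_{m,n} (m+h_1)\left(\prod_{i=1}^{m}\theta^2 e^{2a_{\beta,i}}(L_i\bs q^1+p^1_i)^2\cdot\prod_{j=1}^{h_1}\theta^{-2m/n}\frac{T^2}{T_j^2}(q^1_j)^2\right)^{\frac{1}{m+h_1}}.\nonumber
\end{equation}
Now I would exploit the crucial cancellations: by $iib)$ of Proposition \ref{thm:partition} we have $\sum_i a_{\beta,i}=0$, so the factor $\prod_i e^{2a_{\beta,i}}=1$; the powers of $\theta$ combine to $\theta^{2m\frac{n-h_1}{n(m+h_1)}}$, and recalling $\theta=(\varepsilon T^n)^{1/(m+n)}/\varepsilon^{1/m}$ this is a clean power of $\varepsilon T^n$ and $\varepsilon$. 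The remaining arithmetic factor is
$$\prod_{i=1}^{m}(L_i\bs q^1+p^1_i)^2\cdot\prod_{j=1}^{h_1}\frac{(q^1_j)^2}{T_j^2}\;\geq\;\prod_{i=1}^{m}\|L_i\bs q^1\|^2\cdot\frac{(q^1_1)^2\dotsm(q^1_{h_1})^2}{(T_1\dotsm T_n)^2},$$
where I used $|L_i\bs q^1+p^1_i|\geq\|L_i\bs q^1\|$, bounded each $T_j\leq T_1\dotsm T_n= T^n$ in the denominator (using $T_j\geq 1$), and that $q^1_{h_1+1}=\dotsb=q^1_n=0$. Since each nonzero $q^1_j$ satisfies $|q^1_j|\geq 1$ for $j=1,\dotsc,h_1$, and since $h_1$ is the last nonzero index, $(q^1_1)^+\dotsm(q^1_n)^+=|q^1_1\dotsm q^1_{h_1}|$, so the $\phi$-multiplicative-badly-approximable hypothesis (\ref{eq:condition1}) gives exactly
$$(q^1_1)^+\dotsm(q^1_n)^+\prod_{i=1}^m\|L_i\bs q^1\|\;\geq\;\phi\big(((q^1_1)^+\dotsm(q^1_n)^+)^{1/n}\big)\;\geq\;\phi(T),$$
the last step because $|q^1_j|\leq T_j$ forces $((q^1_1)^+\dotsm(q^1_n)^+)^{1/n}\leq T$ and $\phi$ is non-increasing. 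Assembling these pieces yields $|\bs v^1|_2\gg_{m,n}(\varepsilon T^n/\phi(T))^{\text{exponent}}\cdot(\text{power of }\varepsilon T^n)$, and a short bookkeeping of the exponents produces a lower bound of the shape $\delta_1\gg_{m,n}(\varepsilon T^n)^{1/(m+n)}\min\{1,(\phi(T)/(\varepsilon T^n))^{\kappa}\}$ for a suitable $\kappa$; dividing $(\varepsilon T^n)^{\sigma/(m+n)}$ by $\delta_1^\sigma$ (and bounding the remaining $\delta_2,\dotsc,\delta_\sigma$ below by $\delta_1$) then gives (\ref{eq:sleqM}).

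The main obstacle is the exponent bookkeeping: one must verify that after combining the $\theta$-powers, the $\varepsilon$-powers, and the exponent $1/(m+h_1)$ from the AM--GM step, everything collapses to the clean expression $(\varepsilon T^n/\phi(T))^{\sigma/(m+h_1)}$ claimed in (\ref{eq:sleqM}) (plus the harmless additive $1$ accounting for the case where the $\phi$-term is $\leq 1$ and $\delta_1$ is instead bounded below by the trivial estimate $\delta_1\gg_{m,n}(\varepsilon T^n)^{1/(m+n)}R^{-1}$ of Lemma \ref{lem:qneq0}-type, or simply $\delta_1\gg(\varepsilon T^n)^{1/(m+n)}$ up to constants when all coordinates are already $\asymp\varepsilon^{1/m}$ in size). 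The secondary subtlety, already flagged in the introduction, is that this lemma alone is insufficient: the exponent $\sigma/(m+h_1)$ can be as small as $1/(m+1)$ when $h_1=1$, which is why the naive ``first minimum only'' estimate (\ref{eq:sleqM}) must later be refined by also tracking the later basis vectors $\bs v^s$ with $s>1$, whose index sets $\mathfrak I(\bs v^s)$ strictly grow by (\ref{eq:partii}) and (\ref{eq:partvi}). For the present statement, though, only the single vector $\bs v^1$ is used.
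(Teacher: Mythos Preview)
Your overall approach is the paper's: bound $\delta_1\dotsm\delta_\sigma\geq\delta_1^\sigma$, and estimate $\delta_1$ from below via the AM--GM inequality applied to the $m+h_1$ nonzero coordinates of $\bs v^1$, using $\sum_i a_{\beta,i}=0$ and the $\phi$-hypothesis. The exponent bookkeeping, which you leave open, does collapse cleanly to
\[
\delta_1\gg_{m,n}(\varepsilon T^n)^{-\frac{1}{m+h_1}+\frac{1}{m+n}}\phi(T)^{\frac{1}{m+h_1}},
\]
which gives (\ref{eq:sleqM}) directly.

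There is, however, a genuine gap. You write ``the last step because $|q^1_j|\leq T_j$ forces $\dotsc$'', but nothing guarantees $|q^1_j|\leq T_j$: the vector $\bs v^1$ is a shortest nonzero vector of the deformed lattice $\omega_2\circ\omega_1\circ\tilde\varphi_\beta(\Lambda_{\bs L})$, and its integer part $\bs q^1$ need not lie in the box $\prod_j[-T_j,T_j]$. Without this bound you cannot conclude $\phi\big(((q^1_1)^+\dotsm(q^1_n)^+)^{1/n}\big)\geq\phi(T)$. The paper handles this by an explicit case split: if some $|q^1_{j_0}|>T_{j_0}$, then the single coordinate $\theta^{-m/n}(T/T_{j_0})q^1_{j_0}$ already has modulus $\geq\theta^{-m/n}T=(\varepsilon T^n)^{1/(m+n)}$, whence $(\varepsilon T^n)^{\sigma/(m+n)}/\delta_1^\sigma\ll_{m,n}1$. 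This is the true source of the additive ``$1$'' in (\ref{eq:sleqM}); your explanation of it via a Lemma~\ref{lem:qneq0}--type estimate or ``coordinates already $\asymp\varepsilon^{1/m}$'' is not correct (Lemma~\ref{lem:qneq0} concerns the already-excluded case $\bs q^1=\bs 0$). Once you insert this missing case, your argument is complete and coincides with the paper's.
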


\begin{proof}
From (\ref{eq:length}), we have that
\begin{multline}
\label{eq:lengthlem1}
\delta_{1}\gg_{{m},{n}}\Bigg(\theta^{2} e^{2a_{\beta,1}}\left\|L_{1}\bs{q}^{1}\right\|^{2}+\dotsb+\theta^{2} e^{2a_{\beta,m}}\left\|L_{{m}}\bs{q}^{1}\right\|^{2}+ \\
\left. +\ \theta^{-\frac{2{m}}{{n}}}\frac{T^{2}}{T_{1}^{2}}\left(q^{1}_{1}\right)^{2}+\dotsb+\theta^{-\frac{2{m}}{{n}}}\frac{T^{2}}{T^{2}_{{n}}}\left(q^{1}_{h_{1}}\right)^{2}\right)^{\frac{1}{2}}.
\end{multline}
We consider two different cases. Let us first assume that for all $j=1,\dotsc,{n}$, it holds $\left|q^{1}_{j}\right|\leq T_{j}$.
Then, by applying the standard arithmetic-geometric mean inequality to the right-hand side of (\ref{eq:lengthlem1}), we find that
\begin{equation}
\label{eq:z}
\delta_{1}\gg_{{m},{n}}\left(\theta^{{m}\left(1-\frac{h_{1}}{{n}}\right)}\prod_{i=1}^{{m}}\left\|L_{i}\bs{q}^{1}\right\|\cdot
\prod_{j=1}^{h_{1}}\frac{T}{T_{j}}\left|q^{1}_{j}\right|\right)^{\frac{1}{{m}+h_{1}}}.
\end{equation}
Since the matrix $\bs{L}$ is multiplicatively badly approximable, we have that
\begin{equation}
\label{eq:zz}
\prod_{i=1}^{{m}}\left\|L_{i}\bs{q}^{1}\right\|\geq\frac{\phi\left(\left(q_{1}^{1+}\dotsm q_{n}^{1+}\right)^{\frac{1}{{n}}}\right)}{q_{1}^{1+}\dotsm q_{n}^{1+}}\geq\frac{\phi\left(T\right)}{\prod_{j=1}^{h_{1}}\left|q^{1}_{j}\right|}.
\end{equation}
Moreover,
\begin{equation}
\label{eq:zzz}
\prod_{j=1}^{h_{1}}\frac{T}{T_{j}}\geq T^{h_{1}-{n}}.
\end{equation}
Substituting (\ref{eq:zz}) and (\ref{eq:zzz}) into (\ref{eq:z}), we conclude that
\begin{equation*}
\delta_{1}\gg_{{m},{n}}\left(\theta^{{m}\left(1-\frac{h_{1}}{{n}}\right)}\phi(T)T^{h_{1}-{n}}\right)^{\frac{1}{{m}+h_{1}}}=\left(\varepsilon T^{{n}}\right)^{-\frac{1}{{m}+h_{1}}+\frac{1}{{m}+{n}}}\phi(T)^{\frac{1}{{m}+h_{1}}},
\end{equation*}
where, in the second equality, we used the definition of $\theta$. This implies that
\begin{equation}
\label{eq:lengthlem12}
\frac{\left(\varepsilon T^{{n}}\right)^{\frac{\sigma }{{m}+{n}}}}{\delta_{1}\dotsm\delta_{\sigma }}\leq\frac{\left(\varepsilon T^{{n}}\right)^{\frac{\sigma }{{m}+{n}}}}{\delta_{1}^{\sigma }}\ll_{{m},{n}}\left(\frac{\varepsilon T^{{n}}}{\phi\left(T\right)}\right)^{\frac{\sigma }{{m}+h_{1}}},
\end{equation}
completing the proof in this case.

Let us now assume that there exists an index $1\leq j_{0}\leq h_{1}$ such that $\left|q^{1}_{j_{0}}\right|> T_{j_{0}}$. By ignoring all the terms but $\theta^{-2{m}/{n}}\left(T/T_{j_{0}}\right)^{2}(q^{1}_{j_{0}})^{2}$ in (\ref{eq:lengthlem1}), we deduce that
\begin{equation*}
\delta_{1}\gg_{{m},{n}}\theta^{-\frac{{m}}{{n}}}T=\left(\varepsilon T^{{n}}\right)^{\frac{1}{{m}+{n}}}.
\end{equation*}
Hence,
\begin{equation}
\label{eq:lengthlem13}
\frac{\left(\varepsilon T^{{n}}\right)^{\frac{\sigma }{{m}+{n}}}}{\delta_{1}\dotsm\delta_{\sigma }}\leq\frac{\left(\varepsilon T^{{n}}\right)^{\frac{\sigma }{{m}+{n}}}}{\delta_{1}^{\sigma }}\ll_{{m},{n}}1.
\end{equation}
The claim follows from (\ref{eq:lengthlem12}) and (\ref{eq:lengthlem13}).
\end{proof}

\begin{rmk}
\label{rmk:whylem1doesntwork}
In order to prove Proposition \ref{prop:firstminima}, we need the exponent of the ratio $\varepsilon T^{{n}}/\phi(T)$ in (\ref{eq:sleqM}) to be less than or equal to $({m}+{n}-1)/({m}+{n})$. The fact that this exponent is strictly less than $1$ is crucial to obtain Theorem \ref{thm:BHV}. Now, Lemma \ref{lem:sleqM} ensures that this holds true for $\sigma \leq{m}$. However, for $\sigma \geq{m}+1$, the result depends on the value of the index $h_{1}$. In particular, to deduce the desired estimate for
$$\frac{\left(\varepsilon T^{{n}}\right)^{\frac{\sigma }{{m}+{n}}}}{\delta_{1}\dotsm\delta_{\sigma }},$$
we need the number ${m}+h_{1}$ to be at least $\sigma +1$, i.e., we need that $\{1,\dotsc,\sigma \}\subset\mathfrak{s}(\bs{v}^{1})$. This cannot be guaranteed, since the only information that we have with regards to the vector $\bs{q}^{1}$ is that $\bs{q}^{1}\neq\bs{0}$.
\end{rmk}

In view Remark \ref{rmk:whylem1doesntwork}, a slightly more sophisticated approach is required, where the higher successive minima $\delta_{2},\dotsc,\delta_{s}$ of the lattice $\omega_{2}\circ\omega_{1}\circ\tilde{\varphi}_{\beta}(\Lambda_{\bs{L}})$ play a crucial role.

We will show that for a fixed index $\sigma $ it is not necessary to have $\{1,\dotsc,\sigma +1\}\subset\mathfrak{s}(\bs{v}^{1})$ to obtain (\ref{eq:firstminima}), but it suffices that $\{1,\dotsc,s+1\}\subset\mathfrak{s}(\bs{v}^{s})$ for all $s=1,\dotsc,\sigma $. This is made precise in the following lemma.

\begin{lem}
\label{lem:quickcase}
Let ${m}\leq\sigma \leq {m}+{n}-1$ and suppose that for all $s=1,\dotsc,\sigma $ it holds $\{1,\dotsc,s+1\}\subseteq \mathfrak{s}\left(\bs{v}^{s}\right)$. Then, we have that
$$\frac{\left(\varepsilon T^{{n}}\right)^{\frac{\sigma }{{m}+{n}}}}{\delta_{1}\dotsm\delta_{\sigma }}\ll_{{m},{n}}1+\left(\frac{\varepsilon T^{{n}}}{\phi\left(T\right)}\right)^{\frac{\sigma }{\sigma +1}}.$$
\end{lem}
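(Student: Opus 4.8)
The plan is to bound $\delta_1\dotsm\delta_\sigma$ from below by exploiting the hypothesis $\{1,\dotsc,s+1\}\subseteq\mathfrak{I}(\bs v^s)$ for each $s\le\sigma$. Write $h_s$ for the largest index $j\in\{1,\dotsc,n\}$ with $q^s_j\ne 0$; the hypothesis says precisely that $m+h_s\ge s+1$ when $s\ge m+1$, and for $s\le m$ we at least have $h_s\ge 1$ since $q^s_1\ne 0$ by \eqref{eq:partvi}. Applying to each $\bs v^s$ the same argument as in the proof of Lemma~\ref{lem:sleqM}: either some $|q^s_j|>T_j$ for $j\le h_s$, in which case ignoring all but that term in \eqref{eq:length} gives $\delta_s\gg_{m,n}(\varepsilon T^n)^{1/(m+n)}$; or $|q^s_j|\le T_j$ for all $j$, in which case the arithmetic–geometric mean inequality applied to the $m+h_s$ nonzero coordinates of $\bs v^s$, combined with $\phi$-multiplicative badness \eqref{eq:condition1} and $\prod_{j\le h_s}(T/T_j)\ge T^{h_s-n}$, yields
$$
\delta_s\gg_{m,n}\left(\varepsilon T^n\right)^{-\frac{1}{m+h_s}+\frac{1}{m+n}}\phi(T)^{\frac{1}{m+h_s}}.
$$

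**Combining the per-index bounds.** Now I would multiply these lower bounds over $s=1,\dotsc,\sigma$. For each $s$ set $\eta_s:=(\varepsilon T^n/\phi(T))^{1/(m+h_s)}$ in the "good" case and $\eta_s:=(\varepsilon T^n)^{-1/(m+n)}$... more cleanly: in either case $\delta_s\gg_{m,n}(\varepsilon T^n)^{1/(m+n)}\cdot\mu_s$ where $\mu_s:=\min\{1,(\phi(T)/(\varepsilon T^n))^{1/(m+h_s)}\}$ (using $\phi(T)\le 1$ so $\phi(T)/(\varepsilon T^n)\le 1$ exactly when $\varepsilon T^n\ge\phi(T)$; the complementary regime $\varepsilon T^n<\phi(T)$ forces the whole problem to be vacuous by Lemma~\ref{lem:emptycase}, so we may assume $\varepsilon T^n\ge\phi(T)\ge$ the relevant quantities, and in any event handle that case separately at the end). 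Hence
$$
\frac{(\varepsilon T^n)^{\sigma/(m+n)}}{\delta_1\dotsm\delta_\sigma}\ll_{m,n}\prod_{s=1}^{\sigma}\mu_s^{-1}=\left(\frac{\varepsilon T^n}{\phi(T)}\right)^{\sum_{s=1}^{\sigma}\frac{1}{m+h_s}\,[\text{good }s]}.
$$
The key input is the monotonicity $\mathfrak{I}(\bs v^s)\subseteq\mathfrak{I}(\bs v^{s+1})$ from \eqref{eq:partii}, which forces $h_1\le h_2\le\dotsb\le h_\sigma$, together with $m+h_s\ge s+1$. Thus $\sum_{s=1}^\sigma 1/(m+h_s)\le\sum_{s=1}^\sigma 1/(s+1)$ is \emph{not} quite what we want; instead I want the sharper bookkeeping that at most... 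Actually the cleanest route: since $h_s\le n$ always and $m+h_s\ge s+1$, and we only count "good" indices, the exponent is at most $\max_{1\le s\le\sigma}\frac{\sigma}{m+h_s}\le\frac{\sigma}{\sigma+1}$ provided we can reduce to a \emph{single} dominant index — which is legitimate because if $\varepsilon T^n/\phi(T)\ge 1$ then the product $\prod\mu_s^{-1}$ is bounded by its largest factor raised to the power $\sigma$, i.e. $(\varepsilon T^n/\phi(T))^{\sigma/(m+h_{s_0})}$ for the index $s_0$ minimizing $m+h_{s_0}$ among good indices, and $m+h_{s_0}\ge s_0+1\ge$ ... here I must be careful: the minimizing $s_0$ could be small.

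**The main obstacle and its resolution.** The delicate point — and where I expect to spend the most effort — is exactly this: a small "good" index $s_0$ with $h_{s_0}$ small gives a large exponent $\sigma/(m+h_{s_0})$, potentially exceeding $\sigma/(\sigma+1)$. The resolution must use the hypothesis in the form $\{1,\dotsc,s+1\}\subseteq\mathfrak{I}(\bs v^s)$ \emph{for every} $s\le\sigma$ simultaneously, not just termwise. So rather than bounding each $\delta_s$ in isolation and then taking a max, I would keep the true product: for the good indices, $\prod_{s}\mu_s^{-1}=(\varepsilon T^n/\phi(T))^{E}$ with $E=\sum_{\text{good }s}\frac{1}{m+h_s}$, and I must show $E\le\frac{\sigma}{\sigma+1}$ (when $\varepsilon T^n\ge\phi(T)$, so that a larger exponent is worse). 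Using $m+h_s\ge\max\{m+1,\,s+1\}$ and the fact that the $h_s$ are nondecreasing integers in $\{1,\dotsc,n\}$, the number of indices $s$ with $m+h_s=m+k$ is at most... this is a counting argument on the nondecreasing sequence $(h_s)$: $E\le\sum_{k=1}^{n}\frac{\#\{s\le\sigma: h_s=k\}}{m+k}$, and since $h_s\ge s-m$ forces $\#\{s:h_s\le k\}\le m+k$, an Abel-summation / rearrangement bounds $E$ by $\sum_{j=m+1}^{m+n}\frac1j$ truncated appropriately, which one checks is $\le\sigma/(\sigma+1)$ in the range $m\le\sigma\le m+n-1$. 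Finally, the $1+$ and the regime $\varepsilon T^n/\phi(T)<1$: in the latter Lemma~\ref{lem:emptycase} (applied with the roles matching) shows the count is zero so the bound is trivial, while the $1$ absorbs the "bad" case $\delta_s\gg(\varepsilon T^n)^{1/(m+n)}$. Assembling these pieces gives $\frac{(\varepsilon T^n)^{\sigma/(m+n)}}{\delta_1\dotsm\delta_\sigma}\ll_{m,n}1+(\varepsilon T^n/\phi(T))^{\sigma/(\sigma+1)}$, as claimed.
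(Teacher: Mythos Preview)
Your proposal contains a genuine gap at the critical step. After bounding each $\delta_s$ individually by the unweighted AM--GM inequality and multiplying, you arrive at the exponent $E=\sum_{s=1}^{\sigma}\frac{1}{m+h_s}$ and claim that the constraints $m+h_s\ge s+1$ and $h_1\le h_2\le\dotsb$ force $E\le\frac{\sigma}{\sigma+1}$. This is false. Take for instance $m=1$, $n\ge 2$, $\sigma=2$, with $h_1=1$ and $h_2=2$ (which satisfies all your constraints, including the hypothesis of the lemma). Then $E=\tfrac{1}{2}+\tfrac{1}{3}=\tfrac{5}{6}$, while $\tfrac{\sigma}{\sigma+1}=\tfrac{2}{3}$. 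More generally, the extremal configuration $h_s=\max\{1,s+1-m\}$ gives $E=\tfrac{m}{m+1}+\sum_{t=m+2}^{\sigma+1}\tfrac{1}{t}$, which exceeds $\tfrac{\sigma}{\sigma+1}$ for every $\sigma>m$ and diverges as $\sigma\to\infty$. The Abel-summation/rearrangement you allude to cannot rescue this; the bound simply does not hold. (The remark immediately following the statement of the lemma in the paper warns against exactly this pitfall.)

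The paper's proof circumvents this by \emph{not} applying the standard AM--GM termwise. Instead, for each $s$ it uses a \emph{weighted} AM--GM in which the $(s+1)$-th coordinate of $\bs v^{s}$ receives extra weight, calibrated (via the recursion \eqref{eq:fillingup}) so that when the resulting inequalities are multiplied over $s=1,\dotsc,\sigma$, every factor $T/T_j$ with $j\le\sigma+1-m$ appears with the \emph{same} total exponent $\alpha_\sigma=\sum_s 1/k_s$. This balancing is precisely what your approach lacks: by discarding the individual $T_j$'s via $\prod_{j\le h_s}(T/T_j)\ge T^{h_s-n}$ separately at each level, you forfeit the cancellation that makes the argument work. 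The correct exponent is $\alpha_\sigma$, not $E$, and the inequality $\alpha_\sigma\le\sigma/(\sigma+1)$ is established by a separate inductive computation (Lemma~\ref{lem:exponentswelldef}).
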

\noindent The proof of Lemma \ref{lem:quickcase} is rather involved and we postpone it to Section \ref{sec:quickcase}.

\begin{rmk}
Note that Lemma \ref{lem:quickcase} cannot be proved by adapting the strategy used to prove Lemma \ref{lem:sleqM} to higher successive minima. Specifically, the resulting bound for the exponent of $\varepsilon T^{n}/\phi(T)$ through this approach would be
\begin{equation}
\label{eq:notexponent}
\sum_{s=1}^{\sigma}(m+h_s)^{-1},    
\end{equation}
where $h_s$ is the largest non-zero index $j$ such that $q_j\neq 0$. However, assuming that $h_s=s+1$, one sees that \eqref{eq:notexponent} tends to infinity as $n-m$ grows.
\end{rmk}

Let us fix an index $\sigma \in\{1,\dotsc,{m}+{n}\}$. Recall that Lemma \ref{lem:quickcase} is applicable whenever the condition $\{1,\dotsc,s+1\}\subset\mathfrak{s}\left(\bs{v}^{s}\right)$ holds for all $s=1,\dotsc,\sigma $. However, the condition $\{1,\dotsc,s+1\}\subset\mathfrak{s}\left(\bs{v}^{s}\right)$, for example, never holds for $s={m}+{n}$. Thus, we are left with one extra case to consider, i.e., the case when there exists an index $s\leq\sigma $ such that $\mathfrak{s}\left(\bs{v}^{s}\right)=\{1,\dotsc,s\}$ (recall that \eqref{eq:partvi} holds). We deal with this case in the following lemma.

\begin{lem}
\label{lem:slowcase}
Let ${m}+1\leq\sigma \leq {m}+{n}$ and assume that there exists an index ${m}+1\leq \sigma_{0} \leq\sigma $ such that $\mathfrak{s}\left(\bs{v}^{\sigma_{0} }\right)=\{1,\dotsc,\sigma_{0} \}$. Then we have that
$$\frac{\left(\varepsilon T^{{n}}\right)^{\frac{\sigma }{{m}+{n}}}}{\delta_{1}\dotsm\delta_{\sigma }}\ll_{{m},{n}}\varepsilon T^{{n}}.$$
\end{lem}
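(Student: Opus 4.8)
The key structural fact is that when $\mathfrak{I}(\bs v^{\sigma_0}) = \{1,\dots,\sigma_0\}$, the vectors $\bs v^1,\dots,\bs v^{\sigma_0}$ all lie (by the monotonicity \eqref{eq:partii} and \eqref{eq:partv}) in the coordinate subspace $V := \{y_{\sigma_0 - m + 1} = \dots = y_n = 0\}$, which has dimension $\sigma_0$. Moreover, by \eqref{eq:partvi} we also have $\{1,\dots,\sigma_0\} \subseteq \mathfrak{I}(\bs v^s)$ for all $s \geq \sigma_0$ with $\mathfrak{I}(\bs v^s) \supseteq \{1,\dots,s\}$... but more usefully, the plan is to exploit that $\bs v^1,\dots,\bs v^{\sigma_0}$ span a full-rank sublattice of the lattice $\Lambda' := (\omega_2\circ\omega_1\circ\tilde\varphi_\beta(\Lambda_{\bs L})) \cap V$ inside $V \cong \mb R^{\sigma_0}$. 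So $\delta_1 \cdots \delta_{\sigma_0} \gg_{m,n} \det(\Lambda')$ (using \eqref{eq:parti} to pass from $|\bs v^s|_2$ to $\delta_s$, and that the product of the norms of a basis bounds the covolume from above up to Minkowski's second theorem constants). Then I would compute $\det(\Lambda')$ explicitly: $V$ is cut out by setting the last $n - (\sigma_0 - m)$ of the $\bs y$-coordinates to zero, and the restricted lattice is spanned by the columns of $A_{\bs L}$ corresponding to $\bs p \in \mb Z^m$ and $q_1,\dots,q_{\sigma_0-m}$, scaled by the diagonal maps. Since all the maps $\omega_1,\omega_2,\tilde\varphi_\beta$ are diagonal, $\det(\Lambda')$ is the product of the scaling factors on the surviving coordinates: $\prod_{i=1}^m (\theta e^{a_{\beta,i}}) \cdot \prod_{j=1}^{\sigma_0 - m}(\theta^{-m/n} T/T_j)$. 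Using $\sum_i a_{\beta,i} = 0$ (part $iib$ of Prop.\ \ref{thm:partition}), this collapses to $\theta^{m} \cdot \theta^{-m(\sigma_0-m)/n} \cdot T^{\sigma_0 - m}/(T_1\cdots T_{\sigma_0-m})$, and since $T/T_j \geq 1$ is false in general — rather $\prod_{j=1}^{\sigma_0-m} T/T_j \geq T^{(\sigma_0-m)-n}\prod_{j>\sigma_0-m} T_j \geq T^{(\sigma_0-m)-n}$ as in \eqref{eq:zzz} — we get $\det(\Lambda') \gg_{m,n} \theta^{m(1 - (\sigma_0-m)/n)} T^{(\sigma_0-m)-n} = (\varepsilon T^n)^{(\sigma_0-m)/n \cdot \frac{-m}{m+n} + \ldots}$, which after simplification using $\theta^{m+n} = \varepsilon T^n / \varepsilon^{(m+n)/m} \cdot \varepsilon^{(m+n)/m}$... — in any case the exponent bookkeeping should yield $\det(\Lambda') \gg_{m,n} (\varepsilon T^n)^{(\sigma_0 - m)/(m+n) - \text{something}}$; the clean way is to track that $\prod$ of scalings $= (\varepsilon T^n)^{\sigma_0/(m+n)} \cdot (\varepsilon T^n)^{-1} \cdot (\text{stuff} \geq 1)$.

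Concretely, the computation I expect is: $\prod_{i=1}^m \theta e^{a_{\beta,i}} \cdot \prod_{j=1}^{\sigma_0-m} \theta^{-m/n}(T/T_j) = \theta^{\sigma_0 - m(\sigma_0-m)/n} T^{\sigma_0-m}/(T_1\cdots T_{\sigma_0-m})$. Since $\theta = (\varepsilon T^n)^{1/(m+n)}/\varepsilon^{1/m}$ and $T = \bar T$ (the geometric mean $T^n = T_1\cdots T_n$), one bounds $T^{\sigma_0-m}/(T_1\cdots T_{\sigma_0-m}) = T^{\sigma_0-m-n}\cdot(T_{\sigma_0-m+1}\cdots T_n) \geq T^{\sigma_0-m-n}$. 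This gives $\det(\Lambda') \gg_{m,n} \theta^{\sigma_0 - m(\sigma_0-m)/n}\,T^{\sigma_0-m-n}$. Exponent chase: writing everything in terms of $\varepsilon$ and $T$, one finds $\theta^{\sigma_0-m(\sigma_0-m)/n} T^{\sigma_0-m-n} = (\varepsilon T^n)^{\sigma_0/(m+n)} \cdot (\varepsilon T^n)^{-1}$, i.e.\ the covolume is $\gg_{m,n} (\varepsilon T^n)^{\sigma_0/(m+n) - 1}$. Then for any $\sigma \geq \sigma_0$, since $\delta_s \geq \delta_1 \gg_{m,n} (\varepsilon T^n)^{1/(m+n)}/R$ is too weak — instead use $\delta_{\sigma_0+1}\cdots\delta_\sigma \gg_{m,n} 1$... no — better: $\delta_1\cdots\delta_\sigma \geq \delta_1\cdots\delta_{\sigma_0} \cdot (\text{min}_s \delta_s)^{\sigma-\sigma_0}$, and since some basis vector $\bs v^s$ with $s \le \sigma_0$ has a nonzero $\bs y$-component, $\delta_1 \gg_{m,n} (\varepsilon T^n)^{1/(m+n)}$ exactly as in Lemma \ref{lem:sleqM}'s second case (or via $q^s_1 \neq 0$). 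So $\delta_1 \cdots \delta_\sigma \gg_{m,n} (\varepsilon T^n)^{\sigma_0/(m+n)-1}\cdot (\varepsilon T^n)^{(\sigma-\sigma_0)/(m+n)} = (\varepsilon T^n)^{\sigma/(m+n)-1}$, hence $(\varepsilon T^n)^{\sigma/(m+n)}/(\delta_1\cdots\delta_\sigma) \ll_{m,n} \varepsilon T^n$, which is exactly the claim.

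The main obstacle I anticipate is the covolume lower bound $\delta_1\cdots\delta_{\sigma_0} \gg_{m,n} \det(\Lambda')$: one must be careful that the sublattice generated by $\bs v^1,\dots,\bs v^{\sigma_0}$ might be a proper sublattice of $\Lambda' = \Lambda \cap V$, so its covolume is only $\geq \det(\Lambda')$, and simultaneously Minkowski's second theorem (or the Mahler–Weyl bound \eqref{eq:parti}) is needed to pass from $\delta_1\cdots\delta_{\sigma_0}$ to $\mathrm{covol}$ up to $O_{m,n}(1)$ — this requires knowing that $\delta_1,\dots,\delta_{\sigma_0}$ are \emph{the first $\sigma_0$ minima of the ambient lattice} but are realized by vectors spanning $V$, so one should instead argue that $\delta_1,\dots,\delta_{\sigma_0}$ coincide (up to constants) with the successive minima of $\Lambda'$ restricted to $V$, and then apply Minkowski's second theorem \emph{within} the $\sigma_0$-dimensional space $V$. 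The rest is the diagonal-scaling exponent bookkeeping, which is routine given the definitions of $\theta$, $\omega_1$, $\omega_2$ and parts $iia)$–$iib)$ of Proposition \ref{thm:partition}.
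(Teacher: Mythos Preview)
Your covolume argument for $\delta_1\dotsm\delta_{\sigma_0}$ is correct and is exactly what the paper does: the vectors $\bs v^1,\dotsc,\bs v^{\sigma_0}$ are $\sigma_0$ linearly independent vectors in the rank-$\sigma_0$ lattice $\Lambda'=\Lambda_{\sigma_0}\times\{\bs 0\}$, so their product of lengths dominates the product of the successive minima of $\Lambda'$, and Minkowski's second theorem then gives $\delta_1\dotsm\delta_{\sigma_0}\gg_{m,n}\Det(\Lambda_{\sigma_0})$. The explicit covolume is $\theta^{m(1-(\sigma_0-m)/n)}\prod_{j=1}^{\sigma_0-m}T/T_j$ (note: your exponent $\sigma_0 - m(\sigma_0-m)/n$ on $\theta$ is a slip; it should read $m - m(\sigma_0-m)/n$).

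The genuine gap is in your treatment of $\delta_{\sigma_0+1}\dotsm\delta_\sigma$. Your claimed bound $\delta_1\gg_{m,n}(\varepsilon T^n)^{1/(m+n)}$ does \emph{not} follow from $q^1_1\neq 0$: the first $\bs y$-component of $\bs v^1$ is $\theta^{-m/n}(T/T_1)|q^1_1|$, and since $T/T_1$ can be arbitrarily small (e.g.\ if $T_1\gg T_j$ for $j\neq 1$), this only yields $\delta_1\gg(\varepsilon T^n)^{1/(m+n)}/T_1$. The reference to the second case of Lemma~\ref{lem:sleqM} is misplaced: that case requires $|q^1_{j_0}|>T_{j_0}$ for some $j_0$, an extra hypothesis not available here. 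So your final chain
\[
\delta_1\dotsm\delta_\sigma \gg (\varepsilon T^n)^{\sigma_0/(m+n)-1}\cdot(\varepsilon T^n)^{(\sigma-\sigma_0)/(m+n)}
\]
is unjustified.

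The fix (which is what the paper does) is to \emph{postpone} the lower bound on $\prod_{j}T/T_j$. Keep the exact equality $\Det(\Lambda_{\sigma_0})=\theta^{m(1-(\sigma_0-m)/n)}\prod_{j=1}^{\sigma_0-m}T/T_j$, and for each $\sigma_0<s\leq\sigma$ use the \emph{specific} bound $\delta_s\gg_{m,n}\theta^{-m/n}\,T/T_{s-m}$, which comes from $q^s_{s-m}\neq 0$ via \eqref{eq:partvi}. Multiplying these together with the covolume extends the product to $\prod_{j=1}^{\sigma-m}T/T_j$, and \emph{only then} does the geometric-mean bound $\prod_{j=1}^{\sigma-m}T/T_j\geq T^{\sigma-m-n}$ (as in \eqref{eq:zzz}) give
\[
\delta_1\dotsm\delta_\sigma\gg_{m,n}\theta^{m(1-(\sigma-m)/n)}T^{\sigma-(m+n)}=(\varepsilon T^n)^{\sigma/(m+n)-1}.
\]
The point is that the individual factors $T/T_{s-m}$ in $\delta_{\sigma_0+1},\dotsc,\delta_\sigma$ are exactly what is needed to complete the product $\prod_{j=1}^{\sigma-m}T/T_j$ before invoking $T^n=T_1\dotsm T_n$; bounding the partial product prematurely throws away the factors $T_{\sigma_0-m+1}\dotsm T_n$, which you then cannot recover.
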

\noindent The proof of Lemma \ref{lem:quickcase} can be found in Section \ref{sec:slowcase}.

Combining Lemmas \ref{lem:qneq0}, \ref{lem:sleqM}, \ref{lem:quickcase}, and \ref{lem:slowcase}, we finally obtain a proof of Proposition \ref{prop:firstminima}. 

\section{Counting Lattice Points}
\label{sec:proofM1}

This section is devoted to the proof of Lemmas \ref{lem:quickcase} and \ref{lem:slowcase}.

\subsection{Proof of Lemma \ref{lem:quickcase}}
\label{sec:quickcase}

We distinguish two cases. Let us first assume that for all $1\leq s\leq\sigma $ and all $1\leq j\leq{n}$ it holds $\left|q^{s}_{j}\right|\leq T_{j}$. Let $1\leq h_{s}\leq {n}$ be the largest index such that $q^{s}_{h_{s}}\neq 0$. By (\ref{eq:length}), we have that
\begin{align}
\label{eq:lengthlem3}
\left|\bs{v}^{s}\right|_{2} & \geq\Bigg(\theta^{2} e^{2a_{\beta,1}}\left\|L_{1}\bs{q}^{s}\right\|^{2}+\dotsb+\theta^{2} e^{2a_{\beta,m}}\left\|L_{{m}}\bs{q}^{s}\right\|^{2}+\nonumber \\
 & \hspace{5cm}+\theta^{-\frac{2{m}}{{n}}}\frac{T^{2}}{T_{1}^{2}}\left(q^{s}_{1}\right)^{2}+\dotsb+\frac{T^{2}}{T^{2}_{h_{s}}}\left(q^{s}_{h_{s}}\right)^{2}\Bigg)^{\frac{1}{2}},
\end{align} 
where $s$ ranges from $1$ to $\sigma $. Recall that, by (\ref{eq:partv}), the condition $q^{s}_{h_{s}}\neq 0$ implies that $q^{s}_{1},\dotsc,q^{s}_{h_{s}}\neq 0$ for all $s=1,\dotsc,\sigma $. Moreover, by (\ref{eq:partvi}) and the hypothesis, we have that $h_{s}\geq\max\{{m}+1,s+1\}$.

To bound below the right-hand side of (\ref{eq:lengthlem3}), for each $s=1,\dotsc,\sigma $ we use a weighted arithmetic-geometric mean inequality. Let us explain the logic behind the choice of the weights. If the condition $\{1,\dotsc,\sigma +1\}\subset\mathfrak{s}(\bs{v}^{s})$ were true for $s=1,\dotsc,\sigma $, a standard arithmetic-geometric mean inequality at each level $s$ would suffice (see Remark \ref{rmk:whylem1doesntwork}). 
Since it is not always true that $\{1,\dotsc,\sigma +1\}\subset\mathfrak{s}(\bs{v}^{s})$ for all $s=1,\dotsc,\sigma $,
when applying the weighted arithmetic-geometric mean inequality at the level $s$, we assign a heavier weight to the term $v^{s}_{s+1}$, to compensate for the potential absence of the $(s+1)$-th entry in the vectors $\bs{v}^{s'}$ for $s'<s$. By doing so, we essentially recover the condition that $\{1,\dotsc,\sigma +1\}\subset\mathfrak{s}(\bs{v}^{s})$ for all $s=1,\dotsc,\sigma $.

Let us define the weights more precisely. Each component $v_j^s$ of the vector $\bs v^s$ ($s=1,\dotsc,\sigma$) is assigned a weight $w_{sr}$ for $r=1,\dotsc,m+h_s$ (recall that $h_s$ is the largest index $j$ such that $v^s_{j}\neq 0$). We call weighted arithmetic-geometric mean inequality, with weights $w_{sr}$, the following relation\footnote{This follows from the inequality $w_1 x_1+\dotsb +w_dx_d\geq x_1^{w_1}\dotsm x_d^{w_d}$, valid for all positive $x_1,\dotsc,x_d,w_1,\dotsc,w_d\in\mb R$ with $\sum_{i}w_i=1$. A proof of this may be obtained by applying the finite form of Jensen's Inequality to the logarithm function.}:
\begin{equation}
\label{eq:WAM-GM}
\left(\left(v_{1}^{s}\right)^{2}+\dotsb+\left(v_{m+h_s}^{s}\right)^{2}\right)^{1/2}\geq\left(v_{1}^{s}\right)^{w_{s1}}\dotsm \left(v_{m+h_s}^{s}\right)^{w_{s(m+h_s)}}.
\end{equation}
For $r=1,\dotsc,m+h_s$ we define
\begin{equation*}
w_{sr}=\begin{cases}
\dfrac{1}{m+h_s} & \mbox{if }s\leq m  \\[3mm]  
\dfrac{1}{k_s} & \mbox{if }s>m\mbox{ and }r\neq s+1  \\[3mm] 
\dfrac{1+(k_{s}-{m}-h_{s})}{k_{s}} & \mbox{if }s>m\mbox{ and }r=s+1
\end{cases},    
\end{equation*}
where $k_{s}$ is a parameter yet to be defined. Note that for each $s$ the weights sum up to $1$. Now, let us proceed to define $k_s$. For $s=1,\dotsc,{m}$, according to our choice of weights, we trivially put 
\begin{equation}
\label{eq:exponentbasecase}
k_{s}:={m}+h_{s}.
\end{equation}
For $s\geq {m}+1$, we define $k_{s}$ by recursion. Let $\bs{t}$ be the $({m}+{n}-1)\times {n}$ matrix whose entries are defined by
$$t_{sj}:=
\begin{cases}
1 & \quad\text{if }q^{s}_{j}\neq 0 \\
0 & \quad\text{if }q^{s}_{j}=0
\end{cases}.$$
Then, in line with what we explained above, we require that the numbers $k_{s}$ satisfy the equation
\begin{multline}
\label{eq:fillingup}
\frac{1-t_{1(s+1-{m})}}{k_{1}}+\dotsb+\frac{1-t_{(s-1)(s+1-{m})}}{k_{s-1}}+\frac{1}{k_{s}} \\
=\frac{1+(k_{s}-{m}-h_{s})}{k_{s}}=w_{s(s+1-m)}
\end{multline}
for $s={m}+1,\dotsc,\sigma $. Note that the left-hand side counts (with weights) how many times the $(s+1)$-th entry is null in the vectors $\bs{v}^{1},\dotsc,\bs{v}^{s}$ and adds to this $1/k_s$, representing the component $v_{s+1}^{s}$ itself. Imposing (\ref{eq:fillingup}) for a specific value of the index $s$ allows us to balance out the absence of the terms $T/T_{s+1-{m}}$ in the product $\left|\bs{v}^{1}\right|_{2}\dotsm\left|\bs{v}^{\sigma }\right|_{2}$. This is crucial to obtain an upper bound depending only on the value of $T$ and not on $\max_{j}T_{j}$ in Proposition \ref{prop:firstminima}.

From (\ref{eq:fillingup}) we deduce that
\begin{equation}
\label{eq:from40}
k_{s}:=({m}+h_{s})\left(1-\left(\frac{1-t_{1(s+1-{m})}}{k_{1}}+\dotsb+\frac{1-t_{(s-1)(s+1-{m})}}{k_{s-1}}\right)\right)^{-1}
\end{equation}
for $s={m}+1,\dotsc,\sigma $. Therefore, in order to show that the numbers $k_{s}$ are well defined, we have to prove that
\begin{equation}
1-\left(\frac{1-t_{1(s+1-{m})}}{k_{1}}+\dotsb+\frac{1-t_{(s-1)(s+1-{m})}}{k_{s-1}}\right)>0\nonumber
\end{equation}
for $s={m}+1,\dotsc,\sigma $. This always holds true, thanks to the following (more precise) result.
\begin{lem}
\label{lem:exponentswelldef}
Let $k_{s}>0$ be real numbers such that (\ref{eq:exponentbasecase}) and (\ref{eq:fillingup}) hold for $s=1,\dotsc,{m}+{n}-1$. Let also
\begin{equation}
\label{eq:alphasdef}
\alpha_{s}:=\frac{1}{k_{1}}+\dotsb+\frac{1}{k_{s}}    
\end{equation}
and
\begin{equation}
\label{eq:alphasjdef}
\alpha_{sj}:=\frac{t_{1j}}{k_{1}}+\dotsb+\frac{t_{sj}}{k_{s}}    
\end{equation}
for $s=1,\dotsc,{m}+{n}-1$ and $j=1,\dots,{n}$. Then, under the hypotheses of Lemma \ref{lem:quickcase} and provided (\ref{eq:partv}) and (\ref{eq:partvi}) hold, we have that\vspace{2mm}
\begin{itemize}
\item[$i)$] $k_{s}\geq{m}+h_{s}$ for $s=1,\dotsc,{m}+{n}-1$;\vspace{2mm}
\item[$ii)$] $\alpha_{s}(s+1)+\sum_{j>s+1-{m}}\alpha_{sj}=s$ for $s={m},\dotsc,{m}+{n}-1$, whence $\alpha_{s}\leq s/(s+1)$.\vspace{2mm}
\end{itemize}
If $s+1-{m}={n}$, the sum $\sum_{j>s+1-{m}}\alpha_{sj}$ in part $ii)$ should be disregarded.
\end{lem}
\noindent We prove Lemma \ref{lem:exponentswelldef} in Section \ref{sec:exponentwelldef}.

Let us now apply (\ref{eq:WAM-GM}). In view of Proposition \ref{thm:partition} part $iia)$ and by definition of the weights $w_{sr}$, we obtain
\begin{equation}
\left|\bs{v}^{s}\right|_{2}\gg_{{m},{n}}\left(\theta^{{m}\left(1-\frac{k_{s}-{m}}{{n}}\right)}\prod_{i=1}^{{m}}\left\|L_{i}\left(\bs{q}^{s}\right)\right\|\cdot
\prod_{j=1}^{h_{s}}\frac{T}{T_{j}}\left|q^{s}_{j}\right|\cdot\left(\frac{T}{T_{s+1-{m}}}\left|q^{s}_{s+1-{m}}\right|\right)^{k_{s}-{m}-h_{s}}\right)^{\frac{1}{k_{s}}}\nonumber
\end{equation}
for $s=1,\dotsc,\sigma $. Since the matrix $\bs{L}$ is multiplicatively badly approximable, we also have that
$$\prod_{i=1}^{{m}}\left\|L_{i}\left(\bs{q}^{s}\right)\right\|\geq\frac{\phi\left(\left(q^{s+}_{1}\dotsm q_{n}^{s+}\right)^{\frac{1}{{n}}}\right)}{q^{s+}_{1}\dotsm q_{n}^{s+}}\geq\frac{\phi\left(T\right)}{\prod_{j=1}^{h_{s}}\left|q^{s}_{j}\right|}.$$
for all $s=1,\dots,\sigma $. Hence, we deduce that
\begin{equation}
\label{eq:lengthlem4'}
\left|\bs{v}^{s}\right|_{2}\gg_{{m},{n}}\left(\theta^{{m}\left(1-\frac{k_{s}-{m}}{{n}}\right)}\cdot\phi\left(T\right)\cdot\prod_{j=1}^{h_{s}}\frac{T}{T_{j}}\cdot\left(\frac{T}{T_{s+1-{m}}}\right)^{k_{s}-{m}-h_{s}}\right)^{\frac{1}{k_{s}}}
\end{equation}
for $s=1,\dotsc,\sigma $. We are then left to estimate the product $\left|\bs{v}^{1}\right|_{2}\dotsm\left|\bs{v}^{\sigma }\right|_{2}$. We do this by multiplying together the inequalities in (\ref{eq:lengthlem4'}). For simplicity, we compute the exponent of each factor ($\theta$, $\phi(T)$, and $T/T_j$) separately, using the notation introduced in Lemma \ref{lem:exponentswelldef}, namely \eqref{eq:alphasdef} and \eqref{eq:alphasjdef}. For the constant $\theta$ the exponent in the product of the inequalities in (\ref{eq:lengthlem4'}) is given by
\begin{equation}
\frac{{m}}{k_{1}}\left(1-\frac{k_{1}-{m}}{{n}}\right)+\dotsb+\frac{{m}}{k_{\sigma }}\left(1-\frac{k_{\sigma }-{m}}{{n}}\right)=\frac{{m}({m}+{n})}{{n}}\alpha_{\sigma }-\frac{{m}}{{n}}\sigma,\nonumber
\end{equation}
while for the function $\phi\left(T\right)$ the same exponent equals $\alpha_{\sigma }$. The case of the ratios $T/T_j$ is less evident. If $1\leq j\leq\sigma +1-{m}$ the exponent in the product equals
\begin{equation*}
\sum_{s<j-1+m}\frac{t_{sj}}{k_s}+\frac{k_{j-1+m}-m-h_{j-1+m}}{k_{j-1+m}}+\sum_{j-1+m<s\leq \sigma}\frac{1}{k_s},    
\end{equation*}
where the exceptional value $j-1+m$ is obtained by equating $j=s+1-m$. By applying \eqref{eq:fillingup}, we deduce that this quantity is also equal to
\begin{equation}
\label{eq:tjexp}
\sum_{s<j-1+m}\frac{t_{sj}}{k_s}+\sum_{s\leq j-1+m}\frac{1}{k_s}-\sum_{s<j-1+m}\frac{t_{sj}}{k_s}+\sum_{j-1+m<s\leq \sigma}\frac{1}{k_s}=\alpha_{\sigma}.
\end{equation}
On the other hand, if $j\geq\sigma +2-{m}$, the exponent of the ratios $T/T_{j}$ is given by $\alpha_{\sigma j}$. Combining these considerations, we obtain that
\begin{equation}
\label{eq:lengthlem5}
\left|\bs{v}^{1}\right|_{2}\dotsm\left|\bs{v}^{\sigma }\right|_{2}\gg_{{m},{n}} \theta^{\frac{{m}({m}+{n})}{{n}}\alpha_{\sigma }-\frac{{m}}{{n}}\sigma }\cdot\phi\left(T\right)^{\alpha_{\sigma }}\cdot
\left(\prod_{j=1}^{\sigma +1-{m}}\frac{T}{T_{j}}\right)^{\alpha_{\sigma }}\prod_{j=\sigma +2-{m}}^{{n}}\left(\frac{T}{T_{j}}\right)^{\alpha_{\sigma j}}.
\end{equation}
Now, we observe that
\begin{equation}
\label{eq:finaltheta}
\theta^{\frac{{m}({m}+{n})}{{n}}\alpha_{\sigma }-\frac{{m}}{{n}}\sigma }=\left(\varepsilon^{-\frac{{n}}{{m}({m}+{n})}}T^{\frac{{n}}{{m}+{n}}}\right)^{\frac{{m}({m}+{n})}{{n}}\alpha_{\sigma }-\frac{{m}}{{n}}\sigma }=\varepsilon^{-\alpha_{\sigma }+\frac{\sigma }{{m}+{n}}}T^{{m}\alpha_{\sigma }-\frac{{m}}{{m}+{n}}\sigma }.
\end{equation}
Moreover, by using the fact that $T$ is the geometric mean of the parameters $T_{1},\dotsc,T_{{n}}$, we have that
\begin{multline}
\label{eq:expratios3}
\left(\prod_{j=1}^{\sigma +1-{m}}\frac{T}{T_{j}}\right)^{\alpha_{\sigma }}\prod_{j=\sigma +2-{m}}^{{n}}\left(\frac{T}{T_{j}}\right)^{\alpha_{\sigma j}}= \\
=\prod_{j=\sigma +2-{m}}^{{n}}\left(\frac{T}{T_{j}}\right)^{\alpha_{\sigma j}-\alpha_{\sigma }}\geq T^{-\alpha_{\sigma }({n}+{m}-\sigma -1)+\sum_{j=\sigma +2-{m}}^{{n}}\alpha_{\sigma j}},
\end{multline}
where the lower bound is obtained by trivially setting $T_{j}=1$ for $\sigma +2-{m}\leq j\leq {n}$. By Lemma \ref{lem:exponentswelldef}, it holds
$$\alpha_{\sigma }(\sigma +1)+\sum_{j=\sigma +2-{m}}^{{n}}\alpha_{\sigma j}=\sigma ,$$
therefore, (\ref{eq:expratios3}) implies that
\begin{equation}
\label{eq:nnew}
\left(\prod_{j=1}^{\sigma +1-{m}}\frac{T}{T_{j}}\right)^{\alpha_{\sigma }}\prod_{j=\sigma +2-{m}}^{{n}}\left(\frac{T}{T_{j}}\right)^{\alpha_{\sigma j}}\geq T^{-\alpha_{\sigma }({n}+{m})+\sigma }.
\end{equation}
Combining (\ref{eq:finaltheta}) and (\ref{eq:nnew}) with (\ref{eq:lengthlem5}), we finally obtain that
\begin{multline*}
\left|\bs{v}^{1}\right|_{2}\dotsm\left|\bs{v}^{\sigma }\right|_{2}\gg_{{m},{n}}\varepsilon^{-\alpha_{\sigma }+\frac{\sigma }{{m}+{n}}}T^{{m}\alpha_{\sigma }-\frac{{m}}{{m}+{n}}\sigma -\alpha_{\sigma }({n}+{m})+\sigma }\phi(T)^{\alpha_{\sigma }} \\
=\left(\varepsilon T^{{n}}\right)^{-\alpha_{\sigma }+\frac{\sigma }{{m}+{n}}}\phi(T)^{\alpha_{\sigma }}.    
\end{multline*}

This, in turn, yields
$$\frac{\left(\varepsilon T^{{n}}\right)^{\frac{\sigma }{{m}+{n}}}}{\delta_{1}\dotsm\delta_{\sigma }}\ll_{{m},{n}}\left(\frac{\varepsilon T^{{n}}}{\phi\left(T\right)}\right)^{\alpha_{\sigma }}\ll_{{m},{n}}1+\left(\frac{\varepsilon T^{{n}}}{\phi\left(T\right)}\right)^{\frac{\sigma }{\sigma +1}},$$
where the last inequality is due to Lemma \ref{lem:exponentswelldef} part $ii)$. This completes the proof in the case $|q_{j}^s|\leq T_j$.

Let us now assume that there exist indices $1\leq \sigma_{0} \leq\sigma $ and $1\leq j_{0}\leq {n}$ such that $\left|q^{\sigma_{0} }_{j_{0}}\right|> T_{j_{0}}$. Suppose that $\sigma_{0} $ is the least index such that $\left|q^{\sigma_{0} }_{j_{0}}\right|> T_{j_{0}}$ for some $1\leq j_{0}\leq {n}$. Then, by ignoring all the terms but $\theta^{-2{m}/{n}}\left(T/T_{j_{0}}\right)^{2}(q^{\sigma_{0} }_{j_{0}})^{2}$ in (\ref{eq:lengthlem3}), we find that
$$\delta_{s}\gg_{{m},{n}}\theta^{-\frac{{m}}{{n}}}T=\varepsilon^{\frac{1}{{m}+{n}}}T^{\frac{{n}}{{m}+{n}}}$$
for all $\sigma_{0} \leq s\leq\sigma $. Hence, we can write
$$\frac{\left(\varepsilon T^{{n}}\right)^{\frac{\sigma }{{m}+{n}}}}{\delta_{1}\dotsm\delta_{\sigma }}\ll_{{m},{n}}\frac{\left(\varepsilon T^{{n}}\right)^{\frac{\sigma_{0} -1}{{m}+{n}}}}{\delta_{1}\dotsm\delta_{\sigma_{0} -1}}\frac{\left(\varepsilon T^{{n}}\right)^{\frac{\sigma -\sigma_{0} +1}{{m}+{n}}}}{\left(\varepsilon T^{{n}}\right)^{\frac{\sigma -\sigma_{0} +1}{{m}+{n}}}}=\frac{\left(\varepsilon T^{{n}}\right)^{\frac{\sigma_{0} -1}{{m}+{n}}}}{\delta_{1}\dotsm\delta_{\sigma_{0} -1}}.$$
If $\sigma_{0} \geq{m}+1$, one concludes by the case $|q_{j}^s|\leq T_j$. Otherwise, Lemma \ref{lem:sleqM} gives the required estimate.

\subsection{Proof of Lemma \ref{lem:slowcase}}
\label{sec:slowcase}

Let $B_{\beta}\in\mb{R}^{({m}+{n})\times({m}+{n})}$ be the matrix that represents the linear transformation $\omega_{2}\circ\omega_{1}\circ\tilde{\varphi}_{\beta}$ in the canonical basis. Then we have that
$$\omega_{1}\circ\omega_{2}\circ\tilde{\varphi}_{\beta}(\Lambda_{\bs{L}})=B_{\beta}A_{\bs{L}}\mb{Z}^{{m}+{n}},$$
where $A_{\bs L}$ is defined in \eqref{eq:AL}. Let $\left(B_{\beta}A_{\bs{L}}\right)_{\sigma_{0} }$ be the $\sigma_{0} \times \sigma_{0} $ submatrix of $B_{\beta}A_{\bs{L}}$ formed by the first $\sigma_{0} $ rows and the first $\sigma_{0} $ columns. Let also $\Lambda_{\sigma_0}:=\left(B_{\beta}A_{\bs{L}}\right)_{\sigma_{0} }\mb Z^{\sigma_0}$. By the definition of the maps $\omega_1,\omega_2$, and Proposition \ref{thm:partition}, the lattice $\Lambda_{\sigma_{0} }$ has rank $\sigma_{0} $ and co-volume
\begin{equation}
\label{eq:slowcase0}
\Det\left(\Lambda_{\sigma_{0} }\right)=\theta^{{m}\left(1-\frac{\sigma_{0} -{m}}{{n}}\right)}\prod_{j=1}^{\sigma_{0} -{m}}\frac{T}{T_{j}}.
\end{equation}
Moreover, by the hypothesis, we have that $\bs{v}_{1},\dotsc,\bs{v}_{\sigma_{0} }\in\Lambda_{\sigma_{0} }\times\{\bs 0\}.$
Since $\bs{v}_{1},\dotsc,\bs{v}_{\sigma_{0} }$ are linearly independent, by Minkowsky's Theorem (see \cite[Chapter VIII, Theorem I]{Cas97}), we deduce that
\begin{equation}
\label{eq:slowcase1}
\delta_{1}\dotsm\delta_{\sigma_{0} }\gg_{{m},{n}}\prod_{s=1}^{\sigma_{0} }\left|\bs{v}^{s}\right|_{2}\geq\delta_{1}(\Lambda_{\sigma_{0} })\dotsm\delta_{\sigma_{0} }(\Lambda_{\sigma_{0} })\gg_{{m},{n}}\Det\left(\Lambda_{\sigma_{0} }\right),
\end{equation}
where $\delta_{s}(\Lambda_{\sigma_{0} })$ ($s=1,\dotsc,\sigma_{0} $) are the successive minima of the lattice $\Lambda_{\sigma_{0} }$. This gives us an estimate of the product $\delta_{1}\dotsm\delta_{\sigma_{0} }$.

We are now left to estimate the product $\delta_{\sigma_{0} +1}\dotsm\delta_{\sigma }$. By (\ref{eq:partvi}), we have that $\{1,\dotsc,s\}\subseteq \mathfrak{s}\left(\bs{v}^{s}\right)$ for $s={m}+1,\dotsc,{m}+{n}$. Hence, by (\ref{eq:length}), where we ignore all the terms but $\theta^{-\frac{{m}}{{n}}}\frac{T}{T_{s-{m}}}\left|q^{s}_{s-{m}}\right|$, we obtain that
\begin{equation}
\label{eq:slowcase3}
\delta_{s}\gg_{{m},{n}}|\bs{v}^{s}|_{2}\geq\theta^{-\frac{{m}}{{n}}}\frac{T}{T_{s-{m}}}\left|q^{s}_{s-{m}}\right|\geq\theta^{-\frac{{m}}{{n}}}\frac{T}{T_{s-{m}}}
\end{equation}
for $s=\sigma_{0} +1,\dotsc,\sigma$. Then (\ref{eq:slowcase0}), (\ref{eq:slowcase1}), and (\ref{eq:slowcase3}) imply that
\begin{align}
\delta_{1}\dotsm\delta_{\sigma } & \gg_{{m},{n}}\Det\left(\Lambda_{\sigma_{0} }\right)\prod_{s=\sigma_{0} +1}^{\sigma }\theta^{-\frac{{m}}{{n}}}\frac{T}{T_{s-{m}}}=\nonumber \\
 & =\theta^{{m}\left(1-\frac{\sigma_{0} -{m}}{{n}}\right)}\prod_{j=1}^{\sigma_{0} -{m}}\frac{T}{T_{j}}\prod_{j=\sigma_{0} -{m}+1}^{\sigma -{m}}\theta^{-\frac{{m}}{{n}}}\frac{T}{T_{j}}=\nonumber \\
 & =\theta^{{m}\left(1-\frac{\sigma -{m}}{{n}}\right)}\frac{1}{T^{{m}+{n}-\sigma }}\prod_{j=\sigma +1-{m}}^{{n}}T_{j}\geq\nonumber \\
 & \geq\theta^{{m}\left(1-\frac{\sigma -{m}}{{n}}\right)}T^{\sigma -({m}+{n})}=\left(\varepsilon T^{{n}}\right)^{\frac{\sigma }{{m}+{n}}-1},\nonumber
\end{align}
where $\prod_{s=\sigma_{0} +1}^{\sigma }\theta^{-\frac{{m}}{{n}}}T/T_{s-{m}}=1$ if $\sigma_{0} =\sigma $.
This gives
\begin{equation}
\frac{\left(\varepsilon T^{{n}}\right)^{\frac{\sigma }{{m}+{n}}}}{\delta_{1}\dotsm\delta_{\sigma }}\ll_{{m},{n}}\frac{\left(\varepsilon T^{{n}}\right)^{\frac{\sigma }{{m}+{n}}}}{\left(\varepsilon T^{{n}}\right)^{\frac{\sigma }{{m}+{n}}-1}}=\varepsilon T^{{n}}.\nonumber
\end{equation}

\subsection{Proof of Lemma \ref{lem:exponentswelldef}}
\label{sec:exponentwelldef}

Throughout this section, we denote (once again) by $1\leq h_{s}\leq {n}$ the largest non-zero index $j$ such that $q^{s}_{j}\neq 0$, for $s=1,\dotsc,{m}+{n}-1$.

Fix an index $1\leq\sigma\leq m+n$. If $\sigma <{m}$, by definition, we have that 
$$k_{\sigma }:={m}+h_{\sigma },$$
and part $i)$ holds true. If $\sigma \geq {m}$, we simultaneously prove parts $i)$ and $ii)$ by recursion on $\sigma $. First, we let $\sigma ={m}$. Since
$$k_{{m}}:={m}+h_{{m}},$$
part $i)$ holds true. Further, observe that for $\sigma=m$
\begin{multline}
\label{eq:lem3.71}
\alpha_{\sigma}(\sigma+1)+\sum_{j>\sigma+1-m}\alpha_{\sigma_j} \\
=({m}+1)\sum_{s=1}^{{m}}\frac{1}{{m}+h_{s}}+\sum_{j=2}^{{n}}\sum_{s=1}^{{m}}\frac{t_{sj}}{{m}+h_{s}}=\sum_{s=1}^{{m}}\frac{{m}+1+\sum_{j=2}^{{n}}t_{sj}}{{m}+h_{s}}.
\end{multline}
Since $q_{1}^{s}\neq 0$ for $s=1,\dotsc,{m}$, we also have that
\begin{equation}
\label{eq:lem3.72}
1+\sum_{j=2}^{{n}}t_{sj}=h_{s}.
\end{equation}
Hence, Equations (\ref{eq:lem3.71}) and (\ref{eq:lem3.72}) imply part $ii)$. Now, let us take ${m}<\sigma \leq {m}+{n}-1$, and let us suppose that both parts $i)$ and $ii)$ hold for all the indices $s$ such that ${m}\leq s<\sigma $. By (\ref{eq:from40}), we have that
\begin{equation}
k_{\sigma }:=(h_{\sigma }+{m})\left(1-\left(\frac{1-t_{1(\sigma +1-{m})}}{k_{1}}+\dotsb+\frac{1-t_{(\sigma -1)(\sigma +1-{m})}}{k_{\sigma -1}}\right)\right)^{-1}.\nonumber
\end{equation}
Hence, to prove part $i)$ for $\sigma $, it suffices to show that
$$0\leq\frac{1-t_{1(\sigma +1-{m})}}{k_{1}}+\dotsb+\frac{1-t_{(\sigma -1)(\sigma +1-{m})}}{k_{\sigma -1}}<1.$$
Since either $t_{sj}=0$ or $t_{sj}=1$ for all $s$ and $j$, the recursive hypothesis for part $i)$ ($s<\sigma $) implies that
\begin{equation}
\label{eq:0}
\frac{1-t_{(\sigma +1-{m})1}}{k_{1}}+\dotsb+\frac{1-t_{(\sigma -1)(\sigma +1-{m})}}{k_{\sigma -1}}\geq0.
\end{equation}
To prove the other inequality, we observe that
\begin{equation}
\label{eq:0.5}
\frac{1-t_{(\sigma +1-{m})1}}{k_{1}}+\dotsb+\frac{1-t_{(\sigma -1)(\sigma +1-{m})}}{k_{\sigma -1}}\leq\frac{1}{k_{1}}+\dotsb+\frac{1}{k_{\sigma -1}}=\alpha_{\sigma -1}.
\end{equation}
Since the recursive hypothesis for part $ii)$ ($s=\sigma -1$) implies that
$$\alpha_{\sigma -1}\sigma +\sum_{j>\sigma -{m}}\alpha_{(\sigma -1)j}=\sigma -1,$$
and, since $\alpha_{(\sigma -1)j}\geq 0$ for all $j$, we deduce that
\begin{equation}
\label{eq:1}
\alpha_{\sigma -1}\leq\frac{\sigma -1}{\sigma }<1.
\end{equation}
Hence, combining (\ref{eq:0}), (\ref{eq:0.5}), and (\ref{eq:1}), we obtain that
$$0\leq\frac{1-t_{(\sigma +1-{m})1}}{k_{1}}+\dotsb+\frac{1-t_{(\sigma -1)(\sigma +1-{m})}}{k_{\sigma -1}}<1.$$
We are now left to prove part $ii)$ for $s=\sigma $. We start by observing that
\begin{equation}
\label{eq:finallemma1}
\alpha_{\sigma }(\sigma +1)+\sum_{j>\sigma +1-{m}}\alpha_{\sigma j}=\alpha_{\sigma -1}\sigma +\alpha_{\sigma -1}+\frac{1}{k_{\sigma }}(\sigma +1)+\sum_{j>\sigma +1-{m}}\alpha_{(\sigma -1)j}+\sum_{j>\sigma +1-{m}}\frac{t_{\sigma j}}{k_{\sigma }}.
\end{equation}
We claim that
\begin{equation}
\label{eq:finallemma2}
\alpha_{\sigma -1}+\frac{1}{k_{\sigma }}(\sigma +1)+\sum_{j>\sigma +1-{m}}\frac{t_{\sigma j}}{k_{\sigma }}=1+\alpha_{(\sigma -1)(\sigma +1-{m})}.
\end{equation}
This concludes the proof, since (\ref{eq:finallemma1}), (\ref{eq:finallemma2}), and the recursive hypothesis for part $ii)$ ($s=\sigma -1$) imply that
$$\alpha_{\sigma }(\sigma +1)+\sum_{j>\sigma +1-{m}}\alpha_{\sigma j}=\alpha_{\sigma -1}\sigma +\alpha_{(\sigma -1)(\sigma +1-{m})}+\sum_{j>\sigma +1-{m}}\alpha_{(\sigma -1)j}+1=\sigma -1+1.$$

Now, we prove (\ref{eq:finallemma2}). By assumption, $t_{\sigma j}=1$ for $1\leq j\leq\sigma +1-{m}$, hence, we have that
$$\frac{1}{k_{\sigma }}(\sigma +1)=\frac{1}{k_{\sigma }}\left({m}+\sum_{j=1}^{\sigma +1-{m}}t_{\sigma j}\right),$$
whence
\begin{equation}
\label{eq:explanation0}
\alpha_{\sigma -1}+\frac{1}{k_{\sigma }}(\sigma +1)+\sum_{j>\sigma +1-{m}}\frac{t_{\sigma j}}{k_{\sigma }}=\alpha_{\sigma -1}+\frac{1}{k_{\sigma }}\left({m}+\sum_{j=1}^{{n}}t_{\sigma j}\right)=\alpha_{\sigma -1}+\frac{{m}+h_{\sigma }}{k_{\sigma }}.
\end{equation}
Finally, we observe that (\ref{eq:fillingup}) for $s=\sigma $ can be rewritten as
\begin{equation}
\label{eq:explanation}
\alpha_{\sigma -1}-\alpha_{(\sigma -1)(\sigma +1-{m})}=1-\frac{{m}+h_{\sigma }}{k_{\sigma }}.
\end{equation}
Thus, combining (\ref{eq:explanation0}) and (\ref{eq:explanation}), we obtain that
$$\alpha_{\sigma -1}+\frac{1}{k_{\sigma }}(\sigma +1)+\sum_{j>\sigma +1-{m}}\frac{t_{\sigma j}}{k_{\sigma }}=1+\alpha_{(\sigma -1)(\sigma +1-{m})},$$
which proves (\ref{eq:finallemma2}).

\section{Proof of Theorem \ref{thm:BHV}}
\label{sec:proofbhv}

We work by induction on $n$. The case $n=1$ is \eqref{eq:kru}. For $n>1$ we observe that in the expression of the sum $S(\bs\alpha, \bs T)$, we may always assume that $q_1,\dotsc,q_n\neq 0$, since for the vectors $\bs q$ such that at least one $q_i=0$ the required estimate follows from the inductive hypothesis. Throughout the rest of this section we will assume, without loss of generality, that $\bs\alpha\in[0,1)^n$. Let us fix constants $C\geq 1$ and $\varepsilon_0>0$ and let us consider the following ranges for the sum $S(\bs \alpha,\bs T)$:\vspace{2mm}
\begin{align}
    \label{eq:range1}
    & q_{1}\dotsm q_{n}\|\bs q\cdot\bs\alpha\|\leq\log(T_1\dotsm T_n)^{-n-\varepsilon_0}, \\[2mm]
    & 
\label{eq:range2}
    \log(T_1\dotsm T_n)^{-n-\varepsilon_0}<q_{1}\dotsm q_{n}\|\bs q\cdot\bs\alpha\|\leq\log(T_1\dotsm T_n)^{C}, \\[2mm]
    & \label{eq:range3}
    \log(T_1\dotsm T_n)^{C}<q_{1}\dotsm q_{n}\|\bs q\cdot\bs\alpha\|\leq T_1\dotsm T_n.
\end{align}
\vspace{2mm}Then
$$S(\bs\alpha,\bs T)=S_{1}(\bs\alpha,\bs T)+S_{2}(\bs\alpha,\bs T)+S_{3}(\bs\alpha,\bs T)$$
with
\begin{align*}
    & S_{1}(\bs\alpha,\bs T):=\sum_{\substack{0< q_{i}\leq T_{i}, \\ (\ref{eq:range1})\mbox{ holds}}}\frac{1}{q_{1}\dotsm q_{n}\|\bs q\cdot\bs\alpha\|} \\
    & S_{2}(\bs\alpha,\bs T):=\sum_{\substack{0< q_{i}\leq T_{i}, \\ (\ref{eq:range2})\mbox{ holds}}}\frac{1}{q_{1}\dotsm q_{n}\|\bs q\cdot\bs\alpha\|} \\
    & S_{3}(\bs\alpha,\bs T):=\sum_{\substack{0< q_{i}\leq T_{i}, \\ (\ref{eq:range3})\mbox{ holds}}}\frac{1}{q_{1}\dotsm q_{n}\|\bs q\cdot\bs\alpha\|}.
\end{align*}
Analogously, for the sum $S^{*}(\bs\alpha,T)$, we consider the ranges
\begin{align}
    \label{eq:range1*}
    & q\|q\alpha_1\|\dotsm\|q\alpha_n\|\leq(\log T)^{-n-\varepsilon_0}, \\[2mm]
    & 
\label{eq:range2*}
    (\log T)^{-n-\varepsilon_0}<q\|q\alpha_1\|\dotsm\|q\alpha_n\|\leq(\log T)^{C}, \\[2mm]
    & \label{eq:range3*}
    (\log T)^{C}<q\|q\alpha_1\|\dotsm\|q\alpha_n\|\leq T,
\end{align}
\vspace{2mm}and we write
$$S^{*}(\bs\alpha,T)=S_{1}^{*}(\bs\alpha,T)+S_{2}^{*}(\bs\alpha,T)+S_{3}^{*}(\bs\alpha,T),$$
with
\begin{align*}
    & S_{1}^{*}(\bs\alpha,T):=\sum_{\substack{0< q\leq T, \\ (\ref{eq:range1*})\mbox{ holds}}}\frac{1}{q\|q\alpha_1\|\dotsm\|q\alpha_n\|} \\
    & S_{2}^{*}(\bs\alpha,T):=\sum_{\substack{0< q\leq T, \\ (\ref{eq:range2*})\mbox{ holds}}}\frac{1}{q\|q\alpha_1\|\dotsm\|q\alpha_n\|} \\
    & S_{3}^{*}(\bs\alpha,T):=\sum_{\substack{0<q<T, \\ (\ref{eq:range3*})\mbox{ holds}}}\frac{1}{q\|q\alpha_1\|\dotsm\|q\alpha_n\|}.
\end{align*}
In order to prove Theorem \ref{thm:BHV}, we will estimate each of these sums separately.

\subsection{Estimating the sums $S_{1}$ and $S_{1}^{*}$}

Let us consider the inequalities
\begin{equation}
\label{eq:BC}
q_{1}^{+}\dotsm q_{n}^{+}\|\bs q\cdot\bs\alpha\|\geq \left(\frac{1}{\log (q_{1}^{+}\dotsm q_{n}^{+})}\right)^{n+\varepsilon_0}
\end{equation}
and
\begin{equation}
\label{eq:BCdual}
q\|q\alpha_1\|\dotsm\|q\alpha_n\|\geq \left(\frac{1}{\log q}\right)^{n+\varepsilon_0},
\end{equation}
where $\cdot$ stands for the standard dot-product in $\mb R^n$.

The following result is an easy consequence of the Borel-Cantelli Lemma.

\begin{lem}\label{lem:BC}
 For almost every $\bs\alpha\in\mb [0,1)^{n}$ there are only finitely many integers $q_1,\dotsc,q_n$ such that the converse of (\ref{eq:BC}) holds and finitely many integers $q$ such that the converse of (\ref{eq:BCdual}) holds.
\end{lem}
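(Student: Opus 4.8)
The plan is the convergence (easy) half of the Borel--Cantelli argument. For a nonzero vector $\bs q\in\mb Z_{\ge 0}^{n}$ (these are the relevant vectors, since the sums in (\ref{eq:s}), (\ref{eq:s*}) run over non-negative indices) write $Q:=q_1^{+}\dotsm q_n^{+}$, set $\delta(\bs q):=Q^{-1}(\log Q)^{-n-\varepsilon_0}$ with the convention $\log x=\log\max\{x,e\}$, and let
$$
E_{\bs q}:=\bigl\{\bs\alpha\in[0,1]^{n}:\ \|\bs q\cdot\bs\alpha\|<\delta(\bs q)\bigr\},
$$
so $E_{\bs q}$ is precisely the set of $\bs\alpha$ for which the converse of (\ref{eq:BC}) holds at $\bs q$. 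Since $\bs q\neq\bs 0$, fixing a coordinate $i$ with $q_i\neq 0$ and integrating first in $\alpha_i$ shows that $\bs\alpha\mapsto\bs q\cdot\bs\alpha\bmod 1$ pushes the Lebesgue measure on $[0,1]^{n}$ forward to the Lebesgue measure on $[0,1)$; hence $\mathrm{meas}(E_{\bs q})\le 2\,\delta(\bs q)$. By Borel--Cantelli it then suffices to prove
$$
\sum_{\bs q\in\mb Z_{\ge 0}^{n}\setminus\{\bs 0\}}\frac{1}{Q\,(\log Q)^{n+\varepsilon_0}}<\infty .
$$

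To prove this convergence, note that up to a bounded factor (from the coordinates $q_i\in\{0,1\}$, each giving $q_i^{+}=1$) the series is comparable to $\sum_{m\ge 1}d_n(m)\,m^{-1}(\log m)^{-n-\varepsilon_0}$, where $d_n(m)$ is the number of ordered factorizations of $m$ into $n$ positive factors. Using the classical bound $\sum_{m\le x}d_n(m)\ll_n x(\log x)^{n-1}$ together with partial summation, this is
$$
\ll_n 1+\int_{2}^{\infty}\frac{dx}{x\,(\log x)^{1+\varepsilon_0}}<\infty ;
$$
equivalently, splitting $Q$ dyadically and using $\#\{\bs q:\ Q\le N\}\ll_n N(\log N)^{n-1}$ reduces the sum to $\sum_{k\ge 1}k^{-1-\varepsilon_0}<\infty$. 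Thus $\mathrm{meas}\bigl(\limsup_{\bs q}E_{\bs q}\bigr)=0$, which is the first assertion.

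For the dual statement fix $q\ge 1$ and put
$$
E^{*}_{q}:=\Bigl\{\bs\alpha\in[0,1]^{n}:\ \|q\alpha_1\|\dotsm\|q\alpha_n\|<\tfrac1q(\log q)^{-n-\varepsilon_0}\Bigr\}.
$$
The coordinates $\alpha_1,\dotsc,\alpha_n$ are independent and each $\alpha_i\mapsto q\alpha_i\bmod 1$ pushes the Lebesgue measure on $[0,1]$ forward to the Lebesgue measure on $[0,1)$, so $(q\alpha_1,\dotsc,q\alpha_n)\bmod 1$ is uniformly distributed on $[0,1)^{n}$ and $\mathrm{meas}(E^{*}_{q})=\mathrm{meas}\{\bs u\in[0,1)^{n}:\ \|u_1\|\dotsm\|u_n\|<t\}$ with $t:=q^{-1}(\log q)^{-n-\varepsilon_0}$. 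An elementary computation of the volume of $\{\bs u\in[0,1]^{n}:\ u_1\dotsm u_n<s\}$, which is $\ll_n s\,(1+\log(1/s))^{n-1}$, then yields $\mathrm{meas}(E^{*}_{q})\ll_n t\,(1+\log(1/t))^{n-1}\ll_n q^{-1}(\log q)^{-1-\varepsilon_0}$. Since $\sum_{q\ge 1}q^{-1}(\log q)^{-1-\varepsilon_0}<\infty$, Borel--Cantelli gives $\mathrm{meas}(\limsup_q E^{*}_q)=0$, and intersecting the two full-measure sets obtained above proves the lemma.

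The only delicate point is the convergence of $\sum_{\bs q}Q^{-1}(\log Q)^{-n-\varepsilon_0}$: a crude count $\#\{\bs q:\ Q=k\}=k^{o(1)}$ is far too lossy, and one really has to use the sharp divisor bound $\sum_{m\le x}d_n(m)\ll_n x(\log x)^{n-1}$, so that the extra $(\log Q)^{n-1}$ coming from the number of vectors with a given product cancels all but one power of $\log$ against $(\log Q)^{n+\varepsilon_0}$, leaving the convergent tail $\sum k^{-1-\varepsilon_0}$. The remaining ingredients — the pushforward-to-uniform statements, the volume estimate for $\{u_1\dotsm u_n<s\}$, and the finitely many edge cases with $Q=1$ (or $q\le e$), covered by the $\log\max\{\cdot,e\}$ convention — are routine.
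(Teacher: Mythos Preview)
Your argument is correct and follows the same Borel--Cantelli route as the paper. The paper's proof is terser: it bounds the measure of the exceptional set for each $\bs q$ and asserts summability without justification, and for (\ref{eq:BCdual}) simply says the computation is analogous. You supply the details the paper omits --- the divisor-sum bound $\sum_{m\le x}d_n(m)\ll_n x(\log x)^{n-1}$ for the first series, and the hyperbolic-volume estimate $\mathrm{meas}\{u_1\dotsm u_n<s\}\ll_n s(1+\log(1/s))^{n-1}$ for the dual case --- so your treatment is in fact more complete, but the underlying strategy is identical.
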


\begin{proof}
Fix $q_1,\dotsc,q_n\in\mb Z^{n}\setminus\{\bs 0\}$. The set
$$\bigcup_{p\in\mb Z}\left\{\bs\alpha\in[0,1)^n:|q_1\alpha_1+\dotsb +q_n\alpha_n +p|<\frac{1}{q_1^+\dotsm q_n^+\log\left(q_1^+\dotsm q_n^+\right)^{n+\varepsilon_0}}\right\}$$
has volume bounded by $\left(q_1^+\dotsm q_n^+\right)^{-1}\left(\log q_1^+\dotsm q_n^+\right)^{-n-\varepsilon_0}$, which is a summable function of $q_1,\dotsc,q_n$. Hence, the result for (\ref{eq:BC}) follows from the Borel-Cantelli Lemma. For (\ref{eq:BCdual}), the computation is analogous. 
\end{proof}

From Lemma \ref{lem:BC}, we deduce that for almost every $\bs\alpha\in\mb [0,1)^{n}$
\begin{equation}
\label{eq:S1}
S_{1}(\bs{\alpha},\bs T)=O_{\bs{\alpha}}(1)\quad\mbox{and}\quad S_{1}^{*}(\bs\alpha,T)=O_{\bs\alpha}(1).
\end{equation}
In view of this, we only need to estimate the sums $S_2,S_2^{*},S_3,$ and $S_3^{*}$. We first proceed to estimate the sums $S_3$ and $S_3^{*}$, which are more sensitive to a geometric approach.

\subsection{Estimating the sums $S_3$ and $S_3^{*}$}
For $\bs{\alpha}\in[0,1)^{n}$, $\bs T\in[1,+\infty)^{n}$, $T\geq 1$, and $0<a<b$, we introduce the following counting functions:
$$N(\bs{\alpha},\bs T,a,b):=\#\left\{\bs q\in[0,T_{1}]\times\dotsm\times[0,T_{n}]\cap\mb{Z}^{n}:a<q_{1}\dotsm q_{n}\|\bs q\cdot\bs\alpha\|\leq b\right\}$$
and
$$N^{*}(\bs{\alpha},T,a,b):=\#\left\{q\in[0,T]\cap\mb{Z}:a<q\|q\alpha_1\|\dotsm\|q\alpha_n\|\leq b\right\}.$$
Then, we have
\begin{equation}
\label{eq:finalform}
S_{3}(\bs{\alpha},\bs T)\ll_{\bs{\alpha}} \sum_{k=-\log\left(T_1\dotsm T_n\right)}^{-C\log\log\left (T_1\dotsm T_n\right)}e^{k+1}N\left(\bs{\alpha},\bs T,e^{-k-1},e^{-k}\right),
\end{equation}
and
\begin{equation}
\label{eq:finalformdual}
S_{3}^{*}(\bs{\alpha}, T)\ll_{\bs{\alpha}} \sum_{k=-\log T}^{-C\log\log T}e^{k+1}N^{*}\left(\bs{\alpha}, T,e^{-k-1},e^{-k}\right).
\end{equation}

We will show that the following result holds.
\begin{lem}
\label{lem:lp}
For almost every $\bs\alpha\in\mb [0,1)^n$, all $\bs T\in[1,+\infty)^n$, and all $C\geq 1$ we have that
$$S_{3}(\bs\alpha,\bs T)\ll_{n,\bs\alpha,C} \log \bar T\log T_1\dotsm\log T_{n}+(\log \bar T)^{n(n+\varepsilon_0)/(n+1)+1-C/(n+1)},$$
where $\bar T:=(T_1\dotsm T_n)^{1/n}$.
\end{lem}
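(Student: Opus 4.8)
The plan is to combine Lemma \ref{lem:BC} with the lattice-point bound of Theorem \ref{prop:cor1}, applied with $m=1$ (so that the prefactor $\log(R^{m}/\varepsilon)^{m-1}$ is trivial), via the dyadic reduction (\ref{eq:finalform}). First I would record the Diophantine input: by Lemma \ref{lem:BC}, for almost every $\bs\alpha\in[0,1]^{n}$ there is $c_{\bs\alpha}>0$ with $q_{1}^{+}\dotsm q_{n}^{+}\|\bs q\cdot\bs\alpha\|\geq c_{\bs\alpha}(\log(q_{1}^{+}\dotsm q_{n}^{+}))^{-n-\varepsilon_{0}}$ for \emph{every} $\bs q\in\mb Z^{n}\setminus\{\bs 0\}$, the constant being allowed to swallow the finitely many exceptions since $1,\alpha_{1},\dotsc,\alpha_{n}$ are $\mb Q$-linearly independent and the left-hand side never vanishes. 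Rewriting this in terms of $u=(q_{1}^{+}\dotsm q_{n}^{+})^{1/n}$, it says exactly that $\bs\alpha^{t}\in\mb R^{1\times n}$ is $\phi_{\bs\alpha}$-multiplicatively badly approximable, where $\phi_{\bs\alpha}(x):=\min\{1,\,c'_{\bs\alpha}(\log\max\{x,e\})^{-n-\varepsilon_{0}}\}$ with $c'_{\bs\alpha}\ll_{n}c_{\bs\alpha}$; in particular $\phi_{\bs\alpha}(t)\gg_{\bs\alpha,n}(\log t)^{-n-\varepsilon_{0}}$, so Theorem \ref{prop:cor1} is available for $\bs L=\bs\alpha^{t}$.

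Starting from (\ref{eq:finalform}), note that a $\bs q$ counted by $S_{3}$ satisfies (\ref{eq:range3}), i.e.\ $q_{1}^{+}\dotsm q_{n}^{+}\|\bs q\cdot\bs\alpha\|>\log(T_{1}\dotsm T_{n})^{C}$, so only indices $-\log(T_{1}\dotsm T_{n})\leq k\leq -C\log\log(T_{1}\dotsm T_{n})$ truly occur. For each such $k$ I would split the $\bs q$-box into dyadic cells $q_{j}\in[e^{h_{j}-1},e^{h_{j}}]$ with $0\leq h_{j}\leq\lceil\log T_{j}\rceil$, and write $H:=h_{1}+\dotsb+h_{n}$; on a cell $q_{1}\dotsm q_{n}\asymp_{n}e^{H}$, so the relevant $\bs q$ have $\|\bs q\cdot\bs\alpha\|<\varepsilon_{\bs h}$ with $\varepsilon_{\bs h}\asymp_{n}e^{-k-H}$ and are counted by $\#M(\bs\alpha^{t},\varepsilon_{\bs h},e\varepsilon_{\bs h},(e^{h_{1}},\dotsc,e^{h_{n}}))$. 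Here the hypothesis $R^{m}/\varepsilon\geq e^{m}$ holds with $R=e\varepsilon_{\bs h}$, one has $(1+R)^{m+n-1}\ll_{n}1$ once $\varepsilon_{\bs h}\leq 1/(2e)$, and with $\bar T_{\bs h}:=e^{H/n}$ one gets $\varepsilon_{\bs h}\bar T_{\bs h}^{\,n}\asymp_{n}e^{-k}$; in the complementary range $\varepsilon_{\bs h}>1/(2e)$ the constraint on $\|\bs q\cdot\bs\alpha\|$ is vacuous, the cell count is $\ll_{n}e^{H}\ll_{n}e^{-k}$ (hence $O_{n}(1)$ after multiplying by $e^{k+1}$), and only $O_{n}(1)$ values of $k$ per cell fall in it. Theorem \ref{prop:cor1} then yields
\begin{equation*}
\#M\bigl(\bs\alpha^{t},\varepsilon_{\bs h},e\varepsilon_{\bs h},(e^{h_{1}},\dotsc,e^{h_{n}})\bigr)\ll_{n,\bs\alpha}e^{-k}+\bigl(e^{-k}(\log\bar T_{\bs h})^{n+\varepsilon_{0}}\bigr)^{n/(n+1)},
\end{equation*}
and, by Lemma \ref{lem:emptycase}, this count vanishes when $e^{-k}\ll_{n}\phi_{\bs\alpha}(\bar T_{\bs h})$.

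It remains to multiply by $e^{k+1}$ and sum over the at most $\ll_{n}\log T_{1}\dotsm\log T_{n}$ cells $\bs h$ and the at most $\ll_{n}\log(T_{1}\dotsm T_{n})\asymp_{n}\log\bar T$ indices $k$. The volume term $e^{-k}$ contributes $O_{n}(1)$ per pair, hence $\ll_{n}\log\bar T\log T_{1}\dotsm\log T_{n}$ altogether --- the main term --- while the terms with some $q_{j}=0$ that were silently dropped in (\ref{eq:finalform}) are absorbed by the induction hypothesis on $n$. The error term contributes $e^{k/(n+1)+1}(\log\bar T_{\bs h})^{n(n+\varepsilon_{0})/(n+1)}$, and summing in $k$ first is harmless: the geometric series is dominated by its top term $k\asymp-C\log\log(T_{1}\dotsm T_{n})$, producing a factor $\asymp_{n}(\log\bar T)^{-C/(n+1)}$.

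The main obstacle is the remaining sum over the dyadic cells in the error term. Carried out bluntly this costs a factor $\log T_{1}\dotsm\log T_{n}$, up to $(\log\bar T)^{n}$, which overshoots the asserted $(\log\bar T)^{1}$. The way to recover the sharp exponent is to exploit that the error term can exceed the volume term only when $e^{-k}\ll\phi_{\bs\alpha}(\bar T_{\bs h})^{-n}$: this forces the dyadic height $H=h_{1}+\dotsb+h_{n}$ to be large --- comparable to $\log(T_{1}\dotsm T_{n})$ --- and simultaneously confines $k$ to an interval of length $O_{n}(\log\log\bar T)$; together with the monotonicity of the weight $(\log\bar T_{\bs h})^{n(n+\varepsilon_{0})/(n+1)}$ in $H$ and Lemma \ref{lem:emptycase}, this should pin the error contribution to $\ll_{n,\bs\alpha,C}(\log\bar T)^{n(n+\varepsilon_{0})/(n+1)+1-C/(n+1)}$, completing the proof.
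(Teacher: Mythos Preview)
Your approach is the same as the paper's: split dyadically in $k$ via (\ref{eq:finalform}), then dyadically in each $q_j$, and apply Theorem~\ref{prop:cor1} with $m=1$ on each cell. (A small simplification: the paper takes $R=1/2$ throughout, which avoids your digression on the range $\varepsilon_{\bs h}>1/(2e)$, since $\|{\bs q}\cdot{\bs\alpha}\|\leq 1/2$ always.) You correctly identify that the volume term $e^{-k}$, after multiplying by $e^{k+1}$ and summing over the $\asymp\log T_1\cdots\log T_n$ cells and the $\asymp\log\bar T$ values of $k$, produces the main term; and that the geometric sum in $k$ for the error term is governed by its top value $k\approx-C\log\log(T_1\cdots T_n)$, yielding the factor $(\log\bar T)^{-C/(n+1)}$.

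You are also right to flag the remaining sum over cells $\bs h$ as the obstacle, but your proposed fix does not achieve the exponent $+1$. The claim that ``error exceeds volume forces $H$ comparable to $\log(T_1\cdots T_n)$'' is false: the inequality $e^{-k}\ll\phi_{\bs\alpha}(\bar T_{\bs h})^{-n}\asymp H^{n(n+\varepsilon_0)}$ combined with $-k\geq C\log\log(T_1\cdots T_n)$ only gives $H\gg(\log\bar T)^{C/(n(n+\varepsilon_0))}$, which is nowhere near $n\log\bar T$ unless $C\geq n(n+\varepsilon_0)$. Neither does Lemma~\ref{lem:emptycase} help here, since $e^{-k}\geq(\log\bar T)^{C}\gg\phi_{\bs\alpha}(\bar T_{\bs h})$ in the whole $S_3$ range. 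So for general $C\geq 1$ the error sum over $\bs h$ still costs the full factor $\log T_1\cdots\log T_n$.

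In fact the paper's own proof does not overcome this either. After ``substituting into~(\ref{eq:Martineq})'' it asserts
\[
N\!\left(\bs\alpha,\bs T,e^{-k-1},e^{-k}\right)\ \ll_n\ e^{-k}\log T_1\cdots\log T_n+e^{-nk/(n+1)}(\log\bar T)^{n(n+\varepsilon_0)/(n+1)+1},
\]
but summing the per-cell error $(e^{-k}H^{n+\varepsilon_0})^{n/(n+1)}$ over the cells cannot give $(\log\bar T)^{+1}$; already in the symmetric case $T_1=\cdots=T_n$ one has $\sum_{\bs h}H^{n(n+\varepsilon_0)/(n+1)}\asymp(\log\bar T)^{\,n+n(n+\varepsilon_0)/(n+1)}$. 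What the argument actually yields is
\[
S_3(\bs\alpha,\bs T)\ \ll_{n,\bs\alpha,C}\ \log\bar T\,\log T_1\cdots\log T_n+\log T_1\cdots\log T_n\cdot(\log\bar T)^{\,n(n+\varepsilon_0)/(n+1)-C/(n+1)},
\]
matching the exponent $+n$ in the dual Lemma~\ref{lem:lpdual}. This is harmless for Section~\ref{sec:proofbhv}: one simply takes $C\gg_n 1$ so that the second term is absorbed by the first. So your strategy is correct and delivers what is needed; only the stated exponent $+1$ appears too optimistic, and neither your sketch nor the paper's proof justifies it.
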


\begin{proof}
To prove Lemma \ref{lem:lp}, we write
\begin{align}
\label{eq:Martineq}
& N\left(\bs{\alpha},\bs T,e^{-k-1},e^{-k}\right)\leq \nonumber  \\
& \sum_{\substack{h_i\leq \log T_i\ i=1,\dotsc,n \\ h_1+\dotsb+h_{n}\geq -k}}\#\left\{q_1\leq e^{h_1},\dotsc,q_{n}\leq e^{h_{n}}: e^{-k-h_1-\dotsb -h_{n}-1}\leq\|\bs q\cdot\bs\alpha\|<e^{-k-h_1-\dotsb -h_{n}+n}\right\}\nonumber \\
& \leq \sum_{\substack{h_i\leq \log T_i\ i=1,\dotsc,n \\ h_1+\dotsb+h_{n}\geq -k}}\# M\left(\bs{\alpha}^t,e^{-k-h_1-\dotsb -h_{n}+n},1/2,e^{h_1},\dotsc,e^{h_{n}}\right),
\end{align}
where the condition $h_1+\dotsb+h_{n}\geq -k$ follows from the fact that
$$e^{-h_1-\dotsb -h_{n}-k-1}\leq \|\bs q\cdot\bs\alpha\|<1.$$

By Lemma \ref{lem:BC}, for almost every $\bs{\alpha}\in\mb [0,1)^{n}$ there exists a constant $c_{\bs\alpha}>0$ such that
$$q_1^{+}\dotsm q_{n}^{+}\|\bs q\cdot\bs\alpha\|\geq c_{\bs\alpha}\log\left(q_1^{+}\dotsm q_{n}^{+}\right)^{-n-\varepsilon_0}.$$
Then almost every $\bs\alpha^t\in[0,1)^{n}$ is $d_{\bs{\alpha}}(\log x)^{-n-\varepsilon_0}$-multiplicatively badly approximable, where $d_{\bs\alpha}$ is a constant only depending on $c_{\bs\alpha}$ and $n$. By Theorem \ref{prop:cor1}, with $R=1/2$, we deduce that
\begin{multline}\# M\left(\bs{\alpha}^t,e^{-k-h_1-\dotsb -h_{n}+n},1/2,e^{h_1},\dotsc,e^{h_{n}}\right)\\
\ll e^{-k}+\left(d_{\bs\alpha}^{-1}e^{-k}(h_1+\dotsb+ h_{n})^{n+\varepsilon_0}\right)^{n/(n+1)}.
\end{multline}
Substituting into (\ref{eq:Martineq}) (note that $k<0$ by (\ref{eq:range3})), one obtains
\begin{multline*}
N\left(\bs{\alpha},\bs T,e^{-k-1},e^{-k}\right) \\
\ll_{n} e^{-k}\log T_1\dotsm\log T_{n}+d_{\bs\alpha}^{-n/(n+1)}e^{-nk/(n+1)}(\log \bar T)^{n(n+\varepsilon_0)/(n+1)+1},    
\end{multline*}
whence, by (\ref{eq:finalform}),
$$S_{3}(\bs\alpha,\bs T)\ll_{n,\bs\alpha,C} \log \bar T\log T_1\dotsm\log T_{n}+(\log \bar T)^{n(n+\varepsilon_0)/(n+1)+1-C/(n+1)},$$
proving the desired estimate.
\end{proof}

We are now left to estimate the sum $S_3^{*}$.

\begin{lem}
\label{lem:lpdual}
For almost every $\bs\alpha\in\mb [0,1)^n$, all $T\geq 1$, and all $C\geq 1$ we have that
$$S_{3}^{*}(\bs\alpha, T)\ll_{n,\bs\alpha,C} (\log T)^{n+1}+(\log T)^{n(n+\varepsilon_0)/(n+1)+n-C/(n+1)}.$$
\end{lem}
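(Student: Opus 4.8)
The plan is to mirror the proof of Lemma \ref{lem:lp}, taking the dyadic decomposition in the single variable $q$ and applying Theorem \ref{prop:cor1} to $\bs\alpha$ regarded as an $n\times1$ matrix, so that the roles of ``$m$'' and ``$n$'' in Theorem \ref{prop:cor1} are played by $n$ and $1$. I start from (\ref{eq:finalformdual}), in which the index $k$ effectively ranges over $-\log T\lesssim k\lesssim -C\log\log T$ — the values corresponding to the range (\ref{eq:range3*}); in particular $k<0$ throughout. I decompose $N^{*}(\bs\alpha,T,e^{-k-1},e^{-k})$ according to the dyadic block $q\in(e^{h-1},e^{h}]$, $1\le h\le\lceil\log T\rceil$, into which $q$ falls. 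A $q$ in the $h$-th block with $q\|q\alpha_1\|\dotsm\|q\alpha_n\|\in(e^{-k-1},e^{-k}]$ satisfies $\|q\alpha_1\|\dotsm\|q\alpha_n\|<e^{-k-h+n}$ and $q\le e^{h}$, hence (choosing $\bs p$ optimally) contributes to $M(\bs\alpha,e^{-k-h+n},1/2,e^{h})$; consequently $N^{*}(\bs\alpha,T,e^{-k-1},e^{-k})\le\sum_{h}\#M(\bs\alpha,e^{-k-h+n},1/2,e^{h})$, the sum running over $1\le h\le\lceil\log T\rceil$, and only $h\gtrsim -k$ contribute since $\|q\alpha_1\|\dotsm\|q\alpha_n\|<1$ forces $q>e^{-k-1}$.

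By the second half of Lemma \ref{lem:BC} (inequality (\ref{eq:BCdual})), for almost every $\bs\alpha$ the matrix $\bs\alpha$ is $\phi_{\bs\alpha}$-multiplicatively badly approximable with $\phi_{\bs\alpha}(x)=c_{\bs\alpha}\bigl(\log\max\{x,e\}\bigr)^{-n-\varepsilon_0}$, where $c_{\bs\alpha}\le1$ absorbs the finitely many exceptional $q$. Applying Theorem \ref{prop:cor1} with $R=1/2$ — so that in its notation $\bar T=e^{h}$, $\varepsilon\bar T=e^{-k-h+n}e^{h}=e^{-k+n}\asymp e^{-k}$ and $\log(R^{n}/\varepsilon)\asymp k+h$ — gives
$$\#M(\bs\alpha,e^{-k-h+n},1/2,e^{h})\ll_{n}(k+h)^{n-1}\Bigl(e^{-k}+\bigl(c_{\bs\alpha}^{-1}e^{-k}h^{\,n+\varepsilon_0}\bigr)^{n/(n+1)}\Bigr)$$
whenever $k+h\gg_n1$; in the complementary regime $k+h=O_n(1)$ the hypothesis $R^{n}/\varepsilon\ge e^{n}$ of Theorem \ref{prop:cor1} may fail, but then the block $(e^{h-1},e^{h}]$ contains $\ll_n e^{-k}$ integers and the displayed bound holds trivially. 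The new feature compared with Lemma \ref{lem:lp} is the factor $(k+h)^{n-1}=\log(R^{n}/\varepsilon)^{n-1}$ furnished by Proposition \ref{thm:partition}, which was trivial there because $\bs\alpha^{t}$ had a single row; it is exactly this factor that turns the exponent ``$+1$'' of Lemma \ref{lem:lp} into the ``$+n$'' of Lemma \ref{lem:lpdual}.

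Finally I sum. Summing over $h$ — writing $j=k+h\in[0,\log T+k]$ and bounding the residual powers of $h$ by $\log T$ — gives $N^{*}(\bs\alpha,T,e^{-k-1},e^{-k})\ll_{n,\bs\alpha}e^{-k}(\log T)^{n}+e^{-kn/(n+1)}(\log T)^{\,n+n(n+\varepsilon_0)/(n+1)}$. Inserting this into (\ref{eq:finalformdual}) and summing over $k$ in the range (\ref{eq:range3*}): the first term contributes $\sum_k e^{k+1}e^{-k}(\log T)^{n}\ll(\log T)^{n+1}$, there being $O(\log T)$ admissible values of $k$; the second contributes $(\log T)^{\,n+n(n+\varepsilon_0)/(n+1)}\sum_k e^{k/(n+1)+1}\ll(\log T)^{\,n+n(n+\varepsilon_0)/(n+1)-C/(n+1)}$, the geometric series over $k\le -C\log\log T$ supplying the saving $(\log T)^{-C/(n+1)}$. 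Adding the two contributions yields the asserted bound. I do not expect a genuine obstacle: the argument is parallel to that of Lemma \ref{lem:lp}, and the only points requiring care are the correct bookkeeping of the parameters in Theorem \ref{prop:cor1} (in particular the partition factor $(k+h)^{n-1}$, responsible for the ``$+n$'') and the disposal of the small-$(k+h)$ regime where the hypothesis $R^{n}/\varepsilon\ge e^{n}$ fails.
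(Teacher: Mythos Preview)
Your proof is correct and follows essentially the same approach as the paper's own proof: dyadic decomposition in $q$, application of Theorem \ref{prop:cor1} to the $n\times 1$ matrix $\bs\alpha$ with $R=1/2$, and then the double summation over $h$ and $k$. You are in fact slightly more careful than the paper in two places: you explicitly dispose of the regime $k+h=O_n(1)$ where the hypothesis $R^{n}/\varepsilon\ge e^{n}$ of Theorem \ref{prop:cor1} is not guaranteed, and you correctly track the constant as $c_{\bs\alpha}^{-1}$ inside the error term (the paper writes $c_{\bs\alpha}$, a harmless slip since it is absorbed into the implicit constant).
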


\begin{proof}
We write
\begin{align}
\label{eq:Martineqdual}
& N^{*}\left(\bs{\alpha}, T,e^{-k-1},e^{-k}\right)\leq \nonumber  \\
& \sum_{-k\leq h\leq \log T}\#\left\{q\leq e^{h}: e^{-k-h-1}\leq\|q\alpha_1\|\dotsm\|q\alpha_n\|<e^{-k-h+1}\right\}\nonumber \\
& \leq \sum_{-k\leq h\leq \log T}\# M\left(\bs{\alpha},e^{-k-h+1},1/2,e^h\right),
\end{align}
where the condition $h\geq -k$ derives from the fact that $\|q\alpha_1\|\dotsm\|q\alpha_n\|<1$.

By Lemma \ref{lem:BC}, for almost every $\bs{\alpha}\in\mb [0,1)^{n}$ there exists a constant $c_{\bs\alpha}>0$ such that
$$q\|q\alpha_1\|\dotsm\|q\alpha_n\|\geq c_{\bs\alpha}\left(\log q\right)^{-n-\varepsilon_0}.$$
Then, almost every $\bs\alpha\in[0,1)^{n}$ is $c_{\bs{\alpha}}(\log x)^{-n-\varepsilon_0}$-multiplicatively badly approximable. By Theorem \ref{prop:cor1}, with $R=1/2$, we deduce that
\begin{equation}\# M\left(\bs{\alpha},e^{-k-h+1},1/2,e^{h}\right)
\ll |k+h|^{n-1} e^{-k}+|k+h|^{n-1}\left(c_{\bs\alpha}e^{-k}h^{n+\varepsilon_0}\right)^{n/(n+1)}.\nonumber
\end{equation}
Substituting into (\ref{eq:Martineqdual}), we find
$$N^{*}\left(\bs{\alpha}, T,e^{-k-1},e^{-k}\right)\ll e^{-k}(\log T)^{n}+c_{\bs\alpha}^{n/(n+1)}e^{-nk/(n+1)}(\log T)^{n(n+\varepsilon_0)/(n+1)+n},$$
whence, by (\ref{eq:finalformdual}),
$$S_{3}^{*}(\bs\alpha, T)\ll_{n,\bs\alpha,C} (\log T)^{n+1}+(\log T)^{n(n+\varepsilon_0)/(n+1)+n-C/(n+1)}.$$
\end{proof}

\subsection{Completion of Proof}

Lemmas \ref{lem:lp} and \ref{lem:lpdual} show that, on choosing $C\gg_{n} 1$, the sums $S_3(\bs\alpha,\bs T)$ and $S_3^{*}(\bs\alpha,T)$ are bounded above by the functions $\log \bar T\log T_1\dotsm\log T_n$ and $(\log T)^{n+1}$ respectively.  We are then left to study the sums $S_2$ and $S_2^{*}$. For these sums we have the following estimates.

\begin{lem}
\label{lem:spmainbody}
For almost every $\bs\alpha\in\mb [0,1)^n$, all $\bs T\in[1,+\infty)^n$, $C\geq 1$, and $\eta>0$ it holds that
$$S_{2}(\bs\alpha,\bs T)\ll_{\bs\alpha,\eta,n,C} (\log\log \bar T)^{(n+1)(2+\eta)+1}\log T_1\dotsm\log T_{n},$$
where $\bar T:=(T_1\dotsm T_n)^{1/n}$.
\end{lem}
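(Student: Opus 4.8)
The plan is to treat $S_2$ by the probabilistic (first-moment) method sketched in the Introduction and developed in Appendix~\ref{sec:fmm}, rather than by Theorem~\ref{prop:cor1}: in the middle range (\ref{eq:range2}) the main term $\varepsilon\bar T^n$ of (\ref{eq:cor1}) is smaller than $1$, so one would be forced to use its (larger) error term, whereas the \emph{expected value} of the relevant lattice count is comparable to $\varepsilon\bar T^n$ and is the correct quantity to follow; the gain is that (\ref{eq:range2}) involves only $O_C(\log\log\bar T)$ dyadic scales.

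First I would reduce $S_2$ to lattice-point counts exactly as for $S_3$. Decomposing dyadically in the value $v_{\bs q}:=q_1^{+}\dotsm q_n^{+}\|\bs q\cdot\bs\alpha\|$ and then in each variable $q_i$, and handling the terms with some $q_i=0$ by induction on $n$ (cf.\ the passage around (\ref{eq:finalform})), one obtains
\[
S_2(\bs\alpha,\bs T)\ \ll_{n}\ \sum_{k\in I(\bs T)}\ \sum_{\substack{0\le h_i\le\log T_i\\ h_1+\dotsb+h_n\ge -k}} e^{k+1}\,\#M\!\left(\bs\alpha^{t},\,e^{-k-h_1-\dotsb-h_n+n},\,1/2,\,e^{h_1},\dotsc,e^{h_n}\right),
\]
where (\ref{eq:range2}) forces $k$ to range over an interval $I(\bs T)$ of length $\ll_C\log\log(T_1\dotsm T_n)$. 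Writing $\bs n:=(h_1,\dotsc,h_n,k)$ and letting $f_{\bs n}(\bs\alpha)$ be the corresponding summand, this says $S_2(\bs\alpha,\bs T)\ll_n\sum_{\bs n\in\mathcal R(\bs T)}f_{\bs n}(\bs\alpha)$ for an appropriate index region $\mathcal R(\bs T)$.

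Next I would bound the first moments $m_{\bs n}:=\int_{[0,1]^n}f_{\bs n}(\bs\alpha)\,d\bs\alpha$. By Fubini, $\#M(\bs\alpha^{t},\varepsilon,1/2,e^{h_1},\dotsc,e^{h_n})$ integrates, for each admissible $\bs q$, to the measure of $\{\bs\alpha\in[0,1]^n:\|\bs q\cdot\bs\alpha\|<\varepsilon\}\le 2\varepsilon$; as there are $\ll e^{h_1+\dotsb+h_n}$ admissible $\bs q$, this gives $m_{\bs n}\ll_n e^{k+1}\cdot e^{-k-h_1-\dotsb-h_n+n}\cdot e^{h_1+\dotsb+h_n}\ll_n 1$. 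Consequently $\sum_{\bs n\in\mathcal R(\bs T)}m_{\bs n}\ll_n\#\mathcal R(\bs T)\ll_C\log\log\bar T\cdot\log T_1\dotsm\log T_n$, which is far below the target $\log\bar T\log T_1\dotsm\log T_n$.

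Finally I would apply the first-moment estimate of Appendix~\ref{sec:fmm}. Using the bound (\ref{eq:Schmidtexpl}) for $\int_{[0,1]^n}\big|\sum_{\bs n\in B}f_{\bs n}\big|\,d\bs\alpha$ over coordinate boxes $B$, together with a dyadic decomposition of the index cube, the Borel--Cantelli Lemma, and interpolation to arbitrary $\bs T$ via the monotonicity of $\bs N\mapsto\sum_{n_i\le N_i}f_{\bs n}$, one concludes that for almost every $\bs\alpha$ and all $\bs T$
\[
\sum_{\bs n\in\mathcal R(\bs T)}f_{\bs n}(\bs\alpha)\ \ll_{\bs\alpha,\eta,n,C}\ (\log\log\bar T)^{2+\eta}\sum_{\bs n\in\mathcal R(\bs T)}m_{\bs n},
\]
and combining this with the moment bound yields $S_2(\bs\alpha,\bs T)\ll_{\bs\alpha,\eta,n,C}(\log\log\bar T)^{3+\eta}\log T_1\dotsm\log T_n$; one power of $\log\log\bar T$ records the length of $I(\bs T)$ and $(\log\log\bar T)^{2+\eta}$ the probabilistic loss. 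I expect the last step to be the main obstacle: arranging, uniformly in $\bs T$, that the sum exceeds its expectation by no more than a fixed power of $\log\log\bar T$ (and not of $\log\bar T$) is exactly what the Cassels-type dyadic argument of Appendix~\ref{sec:fmm} is designed to deliver, and it is essential here that the error term of Theorem~\ref{prop:cor1} — which dominates in this range — has been replaced by the much smaller expected value.
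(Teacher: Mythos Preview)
Your proposal is correct and follows essentially the same route as the paper: the paper also reduces $S_2$ to a sum of nonnegative lattice counts indexed by $J\cong\prod_i[0,\log T_i]\cap\mb Z^n$ (via the tessellation of $H_2$ in Proposition~\ref{prop:tess}, which is exactly your dyadic splitting in $(h_1,\dotsc,h_n)$), proves the $O(1)$ first-moment bound by the same Fubini argument (Lemma~\ref{lem:L1formula}), and then applies the Schmidt--Cassels Borel--Cantelli machinery (Proposition~\ref{prop:Schmidt}). The only cosmetic difference is that the paper absorbs the $k$-sum into the function $g(x)\asymp\log x$ and works with a $d=n$ index, whereas you carry $k$ as an extra coordinate; both packagings yield the same factor $(\log\log\bar T)^{3+\eta}$.
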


\begin{lem}
\label{lem:spdualmainbody}
For almost every $\bs\alpha\in\mb [0,1)^n$, all $T\geq 1$, $C\geq 1$, and $\eta>0$ it holds that
$$S_{2}^{*}(\bs\alpha, T)\ll_{n,\bs\alpha,C,\eta} (\log\log T)^{n+2+\eta}(\log T)^n.$$
\end{lem}

The exponents $(n+1)(2+\eta)+1$ and $n+2+\eta$ are likely not optimal in this case. The correct factors here should instead be $\varphi(\log\log(T_1\dotsm T_n))$ and $\varphi(\log\log T)$ for any function $\varphi:[1,+\infty)\to\mb (0,+\infty)$ such that $\sum_{n}\varphi(n)^{-1}<+\infty$ (compare with \cite[Lemma 4.1]{Bec94}).

Combining (\ref{eq:S1}) with Lemmas \ref{lem:lp}, \ref{lem:lpdual}, \ref{lem:spmainbody}, and \ref{lem:spdualmainbody} we deduce (\ref{eq:BHV}) and (\ref{eq:BHV*}). Lemmas \ref{lem:spmainbody} and \ref{lem:spdualmainbody} will be proved in Section \ref{sec:fmm}.

\appendix

\section{Estimating the Sums $S_2$ and $S_2^{*}$}
\label{sec:fmm}
\smallskip
\begin{center}by \textsc{Michael Bj\"orklund, Reynold Fregoli,} and \textsc{Alexander Gorodnik}\end{center}
\medskip

In this appendix we will be proving Lemmas \ref{lem:spmainbody} and \ref{lem:spdualmainbody}. For the reader's convenience, we recall the notation and the statement of the lemmas below. Here and throughout $\|x\|$ stands for distance form $x\in\mb R$ to the nearest integer, while $\cdot$ stands for the usual dot product in $\mb R^n$.

For $\bs\alpha\in[0,1)^n$ and $\bs T\in[1,+\infty)$ let
$$S(\bs\alpha,\bs T):=\sum_{0< q_{i}\leq T_{i}}\frac{1}{q_{1}\dotsm q_{n}\|\bs q\cdot\bs\alpha\|}$$
and for $T\geq 1$ let
$$S^{*}(\bs\alpha,T):=\sum_{0< q\leq T}\frac{1}{q\|q\alpha_1\|\dotsm\|q\alpha_n\|}.$$
Fix $\varepsilon_0>0$ and $C\geq 1$ and consider the following inequalities
\begin{equation}
\label{eq:range2app}
\log(T_1\dotsm T_n)^{-n-\varepsilon_0}<q_{1}\dotsm q_{n}\|\bs q\cdot\bs\alpha\|\leq\log(T_1\dotsm T_n)^{C}
\end{equation}
and
\begin{equation}
\label{eq:range2starapp}
(\log T)^{-n-\varepsilon_0}<q\|q\alpha_1\|\dotsm\|q\alpha_n\|,\leq(\log T)^{C}.
\end{equation}
Define
$$S_{2}(\bs\alpha,\bs T):=\sum_{\substack{0< q_{i}\leq T_{i}, \\ (\ref{eq:range2app})\mbox{ holds}}}\frac{1}{q_{1}\dotsm q_{n}\|\bs q\cdot\bs\alpha\|}$$
and
$$S_{2}^{*}(\bs\alpha,T):=\sum_{\substack{0< q\leq T, \\ (\ref{eq:range2starapp})\mbox{ holds}}}\frac{1}{q\|q\alpha_1\|\dotsm\|q\alpha_n\|}.$$
We aim to prove the following two lemmas.
\begin{lem}
\label{lem:range2app}
For almost every $\bs\alpha\in\mb [0,1)^n$, all $\bs T\in[1,+\infty)^n$, $C\geq 1$, and $\eta>0$ it holds that
$$S_{2}(\bs\alpha,\bs T)\ll_{\bs\alpha,\eta,n,C} (\log\log \bar T)^{(n+1)(2+\eta)+1}\log T_1\dotsm\log T_{n},$$
where $\bar T:=(T_1\dotsm T_n)^{1/n}$.
\end{lem}

\begin{lem}
\label{lem:range2starapp}
For almost every $\bs\alpha\in\mb [0,1)^n$, all $T\geq 1$, $C\geq 1$, and $\eta>0$ it holds that
$$S_{2}^{*}(\bs\alpha, T)\ll_{n,\bs\alpha,C,\eta} (\log\log T)^{n+2+\eta}(\log T)^n.$$
\end{lem}

The proof rests on two key Propositions, i.e., Propositions \ref{prop:Schmidt} and \ref{prop:Schmidtdual}, which are an adaptation of a well-known argument of Schmidt \cite{Sch60}. The interested reader is directed to \cite{KSW17} for a detailed exposition of a similar approach in a different setting.

\subsection{Proof of Lemma \ref{lem:range2app}}

We start by observing that
$$S_{2}(\bs\alpha,\bs T)\ll\sum_{k=-C\log\log(T_1\dotsm T_n)}^{(n+\varepsilon_0)\log\log (T_1\dotsm T_n)}e^{k+1}N\left(\bs{\alpha},\bs T,e^{-k-1},e^{-k}\right),$$
where for $\bs{\alpha}\in[0,1)^{n}$, $\bs T\in[1,+\infty)^{n}$, and $0<a<b$ we put
$$N(\bs{\alpha},\bs T,a,b):=\#\left\{\bs q\in[0,T_{1}]\times\dotsm\times[0,T_{n}]\cap\mb{Z}^{n}:a<q_{1}\dotsm q_{n}\|\bs q\cdot\bs\alpha\|\leq b\right\}.$$

We now require a result from Section \ref{sec:tess} of the main body of this paper, which we recall below. For $\varepsilon>0$, $\bs T\in[1,+\infty)^{n}$, and $0<R\leq 1$ consider the set
$$H_2:=\left\{\bs{x}\in\mb{R}^{n+1}: \prod_{i=0}^{n}|x_i|\leq \varepsilon,\ |x_0|\leq R,\ 1\leq |x_{i}|\leq T_i,\ i=1,\dotsc,n\right\}$$
and define
$$H_{2+}:=H_2\cap\left\{\bs{x}\in\mb{R}^{n+1}:x_{i}\neq 0,\ i=0,\dotsc,{n}\right\}.$$
In Section \ref{sec:tess} we proved the following statement.
\begin{prop}
\label{prop:tessapp}
Let $\varepsilon,\bs T$, and $R$ as above, and assume that $\varepsilon<RT_1\dotsm T_n$. Then there exist a set of indices $J$, a covering $H_{2+}\subset\bigcup_{\beta\in{J}}\! Y_{\beta}$ of the set $H_{2+}$, and a collection of linear maps $\left\{\psi_{\beta}\right\}_{\beta\in{{J}}}$ from $\mb{R}^{{n}+1}$ to itself, such that\vspace{2mm}
\begin{itemize}
    \item[$i)$] $ J=\left([0,\log T_1]\times\dotsb\times[0,\log T_n]\cap\mb Z^{n}\right)\times\{1,\dotsc,2^{n}\}$ (in particular, $J$ is independent of the choice of $\varepsilon$);\vspace{2mm}
    \item[$ii)$] the maps $\psi_{\beta}$ for $\beta\in{J}$ are determined by the expressions $\psi_{\beta}(\bs{x})_{i}:=\pm e^{b_{\beta,i}}\cdot x_{i}$ for $i=0,\dotsc,{n}$ and the coefficients $b_{\beta,i}$ satisfy\vspace{2mm}
    \begin{itemize}
        \item[$iia)$] $b_{\beta,0}\geq 0$ and $b_{\beta,1},\dots,b_{\beta,n}\leq 0$;\vspace{2mm}
        \item[$iib)$] $\sum_{i=0}^{n}b_{\beta,i}=0$;
    \end{itemize}
    \vspace{2mm}
    \item[$iii)$] the sets $Y_\beta$ are measurable and $\psi_\beta(Y_\beta)\subset (0,\varepsilon]\times[1,e]^{n}$ for all $\beta\in J$.
\end{itemize}
\end{prop}
Since the parameter $\varepsilon$ may change, let us write $H_2^{\varepsilon}$ and $H_{2+}^{\varepsilon}$ from now on, in place of $H_2$ (here and throughout $R=1/2$). Consider the lattice
\begin{equation*}
\Lambda_{\bs\alpha^t}:=
\begin{pmatrix}
1 & \bs\alpha^{t} \\
\bs{0} & I_{n}
\end{pmatrix}\mb{Z}^{n+1}
\end{equation*}
and note that for any $0<a\leq b$ it holds that
\begin{equation*}
N\left(\bs{\alpha},\bs T,a,b\right)\leq\#\left(\Lambda_{\bs\alpha^t}\cap H_2^b\right).
\end{equation*}
Let $H_{2+}^k:=H_{2+}^{e^{-k}}$ and denote by $Y_k$ and $\psi_{k}$ the covering and the maps deriving from Proposition \ref{prop:tessapp} for this set. Note that the set $J$ is independent of $k$. Then, by Proposition \ref{prop:tessapp}, it follows that
\begin{equation}
\label{eq:smallprod1}
S_{2}(\bs\alpha,\bs T)\leq \sum_{\beta\in J}\sum_{k=-C\log\log(T_1\dotsm T_n)}^{(n+\varepsilon_0)\log\log(T_1\dotsm T_n)}e^{k+1}\#\left(\psi_\beta^{k}\Lambda_{\bs\alpha^t}\cap \psi_\beta^{k}\left(Y_\beta^{k}\right)\right).
\end{equation}

We now make the following observation.
\begin{lem}
\label{lem:L1formula}
With the notation of Proposition \ref{prop:tessapp}, for all $\beta\in J$ we have that
$$\int_{[0,1)^{n}}\#\left(\psi_\beta\Lambda_{\bs\alpha^t}\cap \psi_\beta\left(Y_\beta\right)\right)d\bs\alpha\ll \varepsilon.$$
\end{lem}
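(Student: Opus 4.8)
The plan is to reduce the $\bs\alpha$-average to an elementary box count. First I would note that, since $\psi_\beta$ is an invertible linear map, $\#(\psi_\beta\Lambda_{\bs\alpha^t}\cap\psi_\beta(Y_\beta))=\#(\Lambda_{\bs\alpha^t}\cap Y_\beta)$, so it suffices to bound $\int_{[0,1]^n}\#(\Lambda_{\bs\alpha^t}\cap Y_\beta)\,d\bs\alpha$. Because $\psi_\beta$ is diagonal up to signs (part $ii)$ of Proposition \ref{prop:tess}), the set $Y_\beta=\psi_\beta^{-1}\big((0,\varepsilon]\times[1,e]^n\big)$ is itself a box $I_0\times I_1\times\dotsb\times I_n$, where $I_0$ is a half-open interval of length $\varepsilon e^{-b_{\beta,0}}$ and, for $j\geq 1$, $I_j$ is an interval with $|I_j|=(e-1)e^{-b_{\beta,j}}$ whose points have absolute value in $[e^{-b_{\beta,j}},e\,e^{-b_{\beta,j}}]\subset[1,\infty)$, using $b_{\beta,j}\leq 0$. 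In particular the projection of $Y_\beta$ onto the last $n$ coordinates avoids $\bs 0$.

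Next I would write a lattice point of $\Lambda_{\bs\alpha^t}$ as $(p+\bs q\cdot\bs\alpha,\bs q)$ with $p\in\mb Z$ and $\bs q\in\mb Z^n$, so that
\[
\#(\Lambda_{\bs\alpha^t}\cap Y_\beta)=\sum_{\bs q\in(I_1\times\dotsb\times I_n)\cap\mb Z^n}\#\{p\in\mb Z:\,p+\bs q\cdot\bs\alpha\in I_0\}.
\]
For every $\bs q$ occurring in this sum we have $\bs q\neq\bs 0$, hence the push-forward of Lebesgue measure on $[0,1]^n$ under $\bs\alpha\mapsto\{\bs q\cdot\bs\alpha\}$ is Lebesgue measure on $[0,1)$. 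Since the inner count depends on $\bs\alpha$ only through $\{\bs q\cdot\bs\alpha\}$, Tonelli's theorem together with this equidistribution yields
\[
\int_{[0,1]^n}\#\{p\in\mb Z:\,p+\bs q\cdot\bs\alpha\in I_0\}\,d\bs\alpha=\int_0^1\#\big(\mb Z\cap(I_0-t)\big)\,dt=|I_0|=\varepsilon e^{-b_{\beta,0}},
\]
independently of $\bs q$.

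Finally I would bound the number of $\bs q\in(I_1\times\dotsb\times I_n)\cap\mb Z^n$ by $\prod_{j=1}^n(|I_j|+1)\leq\prod_{j=1}^n e\,e^{-b_{\beta,j}}=e^n e^{-\sum_{j=1}^n b_{\beta,j}}=e^n e^{b_{\beta,0}}$, where the middle step uses $|I_j|+1\leq(e-1)e^{-b_{\beta,j}}+e^{-b_{\beta,j}}$ (valid since $e^{-b_{\beta,j}}\geq 1$) and the last equality is part $iib)$ of Proposition \ref{prop:tess}. Combining the two displays gives $\int_{[0,1]^n}\#(\Lambda_{\bs\alpha^t}\cap Y_\beta)\,d\bs\alpha\ll_n e^{b_{\beta,0}}\cdot\varepsilon e^{-b_{\beta,0}}=\varepsilon$, as desired. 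The only delicate point is the exact evaluation of the $\bs\alpha$-average of the inner count, which hinges on $\bs q\neq\bs 0$: otherwise such a term would contribute one lattice point for every admissible $\bs q$ and the heuristic ``average count $\approx$ volume'' would break down. This is exactly why one exploits that the projection of $Y_\beta$ onto the $\bs q$-coordinates stays away from the origin, which follows from $b_{\beta,j}\leq 0$ and the fact that $[1,e]$ does not contain $0$.
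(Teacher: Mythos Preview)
Your proof is correct and follows essentially the same strategy as the paper: both arguments unfold the $\bs\alpha$-integral via the $p$-summation to reduce the average count to the volume of the box, then use $\sum_i b_{\beta,i}=0$ to cancel the dilation factors. Your formulation---pulling back to $Y_\beta$ and invoking equidistribution of $\{\bs q\cdot\bs\alpha\}$ directly---is arguably cleaner than the paper's explicit unfolding $p=kq_1+p_0$ and change of variables, but the underlying mechanism is identical.
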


\begin{proof}
Let $\chi$ denote the characteristic function of the set $(0,\varepsilon]\times[1,e]^{n}$. Then, by Proposition \ref{prop:tess}, we have that\footnote{We are considering only the cases where the diagonal map $\psi_{\beta}$ lies in the connected component of the identity in $\textup{SL}_{n+1}(\mb R)$, but the other cases are analogous.}
\begin{align}
& \int_{[0,1)^{n}}\#\left(\psi_\beta\Lambda_{\bs\alpha^t}\cap \psi_\beta\left(Y_\beta\right)\right)d\bs\alpha\nonumber \\
& \leq\sum_{p,q_1,\dotsc,q_{n}\in\mb Z}\int_{[0,1)^{n}}\chi\left(e^{b_{\beta,0}}(p+\alpha_1 q_1+\dotsb +\alpha_{n} q_{n}),e^{b_{\beta,1}}q_1,\dotsc,e^{b_{\beta,n}}q_{n}\right)d\bs\alpha\nonumber \\
& =\sum_{q_1,\dotsc,q_{n}\in\mb Z}\sum_{k\in\mb Z}\sum_{p_0=0}^{q_1-1}\int_{[0,1)^{n}}\chi\left(e^{b_{\beta,0}}((\alpha_1+k)q_1+p_0+\alpha_2 q_2\dotsb +\alpha_{n} q_{n}),e^{b_{\beta,1}}q_1,\dotsc,e^{b_{\beta,n}}q_{n}\right)d\bs\alpha\nonumber\\
& =\sum_{q_1,\dotsc,q_{n}\in\mb Z}\sum_{p_0=0}^{q_1-1}\int_{[0,1)^{n-1}}\int_{\mb R}\chi\left(e^{b_{\beta,0}}(xq_1+p_0+\dotsb +\alpha_{n} q_{n}),e^{b_{\beta,1}}q_1,\dotsc,e^{b_{\beta,n}}q_{n}\right)dx d\alpha_2\dotsm d\alpha_{n}\nonumber.
\end{align}
By the change of variables $y=e^{b_{\beta,0}}(xq_1+p_0+\dotsb +\alpha_{n} q_{n})$, we find
\begin{multline}
 \int_{[0,1)^{n}}\#\left(\psi_\beta\Lambda_{\bs\alpha^t}\cap \psi_\beta\left(Y_\beta\right)\right)d\bs\alpha \\
 \leq \sum_{q_1,\dotsc,q_{n}\in\mb Z}\sum_{p_0=0}^{q_1-1}\frac{1}{q_1e^{b_{\beta,0}}}\int_{[0,1)^{n-1}}\int_{\mb R}\chi\left(y,e^{b_{\beta,1}}q_2,\dotsc,e^{b_{\beta,n}}q_{n}\right)dy d\alpha_2\dotsm d\alpha_{n}.\nonumber
\end{multline}
Now, since $b_{\beta,i}< 0$ for $i=1,\dotsc,n$ (part $iia$ of Proposition \ref{prop:tessapp}), the term $\chi\left(y,e^{b_{\beta,1}}q_2,\dotsc,e^{b_{\beta,n}}q_{n}\right)$ is non-null only for $q_i\ll e^{-b_{\beta,i}}$. Moreover, by the definition of $\chi$, we have that
$$\int_{\mb R}\chi\left(y,e^{b_{\beta,1}}q_2,\dotsc,e^{b_{\beta,n}}q_{n}\right)dy\leq \varepsilon$$
independently of the value of $q_1,\dotsc,q_{n}$. 
By Part $iib$ of Proposition \ref{prop:tessapp}, it follows that
$$\int_{[0,1)^{n}}\#\left(\psi_\beta\Lambda_{\bs\alpha^t}\cap \psi_\beta\left(Y_\beta\right)\right)d\bs\alpha\ll \varepsilon e^{-\sum_{i=0}^{n}b_{\beta,i}}=\varepsilon,$$
concluding the proof.
\end{proof}

From Lemma \ref{lem:L1formula}, we deduce that for all subsets $\tilde J\subset J$ it holds that
\begin{equation}
\label{eq:Schmidt'shypothesis}
\int_{[0,1)^{n}}\left|\sum_{\beta\in \tilde J}\sum_{k=-C\log\log(T_1\dotsm T_n)}^{(n+\varepsilon_0)\log\log(T_1\dotsm T_n)}e^{k+1}\#\left(\psi_\beta^{k}\Lambda_{\bs\alpha^t}\cap \psi_\beta^{k}\left(Y_\beta^{k}\right)\right)\right|d\bs\alpha\ll_{n,C} \log\log \bar T\#\tilde J.
\end{equation}

To conclude, we rely on the following Proposition, which we prove in Subsection \ref{sec:Schmidtdual}.

\begin{prop}
\label{prop:Schmidt}
Let $d\geq 1$ and let $(Y,\nu)$ be a probability space. For $\bs n\in\mb Z^d$ ($n_i\geq 0$) let $f_{\bs n}:Y\to \mb R$ be a family of measurable functions. Assume that for any choice of $0\leq A_i<B_i\leq N_{i}$ for $i=1,\dotsc,d$, with $N_i\geq 1$, it holds that
\begin{equation}
\label{eq:Schmidt}
\int_Y\left|\sum_{A_i\leq n_i\leq B_i\ i=1,\dotsc, d}f_{\bs n}(y)\right|d\nu\leq g(N_1\dotsm N_d)(B_1-A_1)\dotsm(B_d-A_d),
\end{equation}
where $g:(1,+\infty]\to[1,+\infty]$ is increasing and such that $g(2x)\ll 2g(x)$ for all $x\geq 1$.
Then, for almost every $y\in Y$, every $\eta>0$, and every $N_1,\dotsc,N_d\geq 1$ it holds that
$$\left|\sum_{n_i\leq N_i\ i=1,\dotsc,d}f_{\bs n}(y)\right|\ll_{y,\eta,d}g(N_1\dotsm N_d)\log(N_1\dotsm  N_d)^{d(2+\eta)}N_1\dotsm N_d.$$
\end{prop}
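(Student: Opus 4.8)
The plan is to run the argument of Schmidt \cite{Sch60}: a dyadic decomposition of the summation box $[0,N_1]\times\dots\times[0,N_d]$, the first-moment bound \eqref{eq:Schmidt} applied to each dyadic block, Markov's inequality, and the Borel--Cantelli lemma. The only genuine work is to organise the bookkeeping so that the total logarithmic loss is exactly $\log(N_1\dotsm N_d)^{2+\eta}$; it is cleanest to explain the mechanism in dimension $d=1$.

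So let $d=1$, write $N=N_1$ and $S_M(y):=\sum_{0\le n\le M}f_{(n)}(y)$, and fix $k$ with $2^k\le N<2^{k+1}$. For the sparse values, \eqref{eq:Schmidt} with $A=0$, $B=2^k$ gives $\int_Y|S_{2^k}|\,d\nu\le g(2^k)2^k$, so Markov's inequality together with Borel--Cantelli along $k=1,2,\dots$, with the summable certificate $k^{1+\eta}$, produces for almost every $y$ a finite constant $C(y)$ with $|S_{2^k}(y)|\le C(y)\,g(2^k)2^k\,k^{1+\eta}$ for all $k$. To handle the remaining values I would telescope $S_N-S_{2^k}$ over the dyadic scales inside $[2^k,2^{k+1})$: at scale $j$ the $2^j$ dyadic subintervals of length $2^{k-j}$ tile $[2^k,2^{k+1})$, so if $\Delta_I$ denotes the sum of $f_{(n)}$ over such an interval $I$, then \eqref{eq:Schmidt} gives $\int_Y\sum_I|\Delta_I|\,d\nu\le 2^j\,g(2^{k+1})2^{k-j}=g(2^{k+1})2^k$, \emph{uniformly in $j$}, whence $\int_Y\sum_{j=0}^k\sum_I|\Delta_I|\,d\nu\le(k+1)\,g(2^{k+1})2^k$. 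Since $|S_N(y)-S_{2^k}(y)|\le\sum_{j=0}^k\sum_I|\Delta_I(y)|$ for every $N\in[2^k,2^{k+1})$, a second round of Markov and Borel--Cantelli (again along $k$, again with certificate $k^{1+\eta}$) gives $\sup_{2^k\le N<2^{k+1}}|S_N(y)-S_{2^k}(y)|\ll_{y,\eta}g(2^{k+1})2^k\,k^{2+\eta}$ for all $k$. Adding the two estimates and using $g(2x)\ll g(x)$, $2^k\asymp N$, $k\asymp\log N$, we obtain $|S_N(y)|\ll_{y,\eta}g(N)\,N\,(\log N)^{2+\eta}$. The key point is that one must bound the first moment of the \emph{entire} inner double sum over a gap before invoking Borel--Cantelli: this keeps the exponent of the logarithm at $2+\eta$ (one power from the $k+1$ scales, one from the Borel--Cantelli certificate), whereas estimating the blocks $\Delta_I$ individually would force a larger power.

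For general $d$ the strategy is the same, but the dyadic decomposition of $[0,N_1]\times\dots\times[0,N_d]$ must be chosen carefully. A useful first step is to apply the one-dimensional argument in the last coordinate only: setting $G_m(y):=\sum_{\bs n'\le\bs N'}f_{(\bs n',m)}(y)$ with $\bs N'=(N_1,\dots,N_{d-1})$, hypothesis \eqref{eq:Schmidt} shows that $(G_m)_m$ satisfies the one-dimensional hypothesis with $g$ replaced by $B\mapsto g(BN_1\dotsm N_{d-1})\,N_1\dotsm N_{d-1}$, so the case $d=1$ already yields $\bigl|\sum_{n_d\le N_d}G_{n_d}(y)\bigr|\ll g(N_1\dotsm N_d)\,N_1\dotsm N_d\,(\log N_d)^{2+\eta}$ for almost every $y$. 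The hard part --- and where the bulk of the proof lies --- is to make such an estimate hold for almost every $y$ \emph{simultaneously over all} $\bs N$, with an implied constant independent of $\bs N$ and with $\log N_d$ replaced by $\log(N_1\dotsm N_d)$. I expect this to require a genuinely $d$-dimensional dyadic decomposition together with a choice of Borel--Cantelli thresholds arranged, as in the one-dimensional case, so that at every dyadic scale the relevant blocks tile a sub-box of bounding volume at most $N_1\dotsm N_d$ and the sum of their first moments carries no superfluous logarithmic factor; delivering the exponent $2+\eta$ uniformly in $d$ is exactly the content of this bookkeeping.
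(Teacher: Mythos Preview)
Your $d=1$ argument is correct and is essentially the paper's proof in that case: bounding the $L^1$-norm of the full sum $\sum_{j}\sum_{I}|\Delta_I|$ by $(k+1)\,g(2^{k+1})2^k$, then a single Markov--Borel--Cantelli step with certificate $k^{1+\eta}$, is exactly the paper's Lemmas~\ref{lem:A1} and~\ref{lem:A3} for $d=1$.

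For general $d$, however, your proposal has a genuine gap. You correctly observe that the inductive route via $G_m(y)=\sum_{\bs n'\le\bs N'}f_{(\bs n',m)}(y)$ fails, because the full-measure set on which the one-dimensional conclusion holds depends on the frozen parameters $\bs N'$. But you then stop, saying only that you ``expect'' a $d$-dimensional dyadic decomposition to work. The paper shows that it does, and the construction is more direct than you seem to anticipate: simply take the \emph{product} of your one-dimensional dyadic families. For each tuple $(s_1,\dots,s_d)$, let the blocks be products $I_1\times\dotsb\times I_d$ with $I_i\in L_{s_i}$ (your collection of dyadic subintervals of $[0,2^{s_i})$). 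The $L^1$-bound on $\sum_{I_1,\dots,I_d}\bigl|\sum_{\bs n\in I_1\times\dotsb\times I_d}f_{\bs n}\bigr|$ is then $g(2^{s_1+\dotsb+s_d})\,s_1\dotsm s_d\,2^{s_1+\dotsb+s_d}$, each coordinate contributing one factor $s_i$ exactly as in your $d=1$ computation. Apply Markov once with threshold $(s_1\dotsm s_d)^{1+\eta}$ times this bound, giving an exceptional set $Y_{s_1,\dots,s_d,\eta}$ of measure $\le(s_1\dotsm s_d)^{-1-\eta}$. The crucial point you did not write down is that $\sum_{s_1,\dots,s_d\ge 1}(s_1\dotsm s_d)^{-1-\eta}=\bigl(\sum_{s\ge 1}s^{-1-\eta}\bigr)^{d}<\infty$, so Borel--Cantelli applies directly over the $d$-tuples. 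For $y$ outside all but finitely many $Y_{s_1,\dots,s_d,\eta}$ and any $\bs N$ with $N_i<2^{s_i}$, the box $\prod_i[0,N_i]$ is covered by a sub-collection of the product blocks (at most $s_i$ in each coordinate), and the bound follows with factor $(s_1\dotsm s_d)^{2+\eta}$. No induction is needed, and the uniformity in $\bs N$ that worried you comes for free because a single exceptional set handles all $\bs N$ in the dyadic range $\prod_i[1,2^{s_i})$.
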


In view of (\ref{eq:Schmidt'shypothesis}), Proposition \ref{prop:Schmidt}, applied with $N_i=\log T_i$, $\bs n=\beta$, $g(x)\asymp_{n,C}\log x$, and
$$f_{\beta}(\bs\alpha)=\sum_{k=-C\log\log(T_1\dotsm T_n)}^{(n+\varepsilon_0)\log\log(T_1\dotsm T_n)}e^{k+1}\#\left(\psi_\beta^{k}\Lambda_{\bs\alpha^t}\cap \psi_\beta^{k}\left(Y_\beta^{k}\right)\right),$$
shows that for any $\eta>0$ and almost every $\bs\alpha\in[0,1)^{n}$
$$S_{2}(\bs\alpha,\bs T)\ll_{\bs\alpha,\eta,n,C}(\log\log \bar T)^{(n+1)(2+\eta)+1}\log T_1\dotsm\log T_{n}.$$
This proves Lemma \ref{lem:range2app}.

\subsection{Proof of Lemma \ref{lem:range2starapp}}

In this subsection we will be estimating the sum $S_{2}^{*}(\bs\alpha, T)$. As in the previous case, we start by observing that
\begin{equation}
\label{eq:spdual}
S_{2}^{*}(\bs\alpha, T)\ll\sum_{k=-C\log\log T}^{(n+\varepsilon_0)\log\log T}e^{k+1}N^{*}\left(\bs{\alpha},T,e^{-k-1},e^{-k}\right),
\end{equation}
where for $\bs{\alpha}\in[0,1)^{n}$, $T\geq 1$, and $0<a<b$ we put
$$N^{*}(\bs{\alpha},T,a,b):=\#\left\{q\in[0,T]\cap\mb{Z}:a<q\|q\alpha_1\|\dotsm\|q\alpha_n\|\leq b\right\}.$$

Let
\begin{equation*}
\Lambda_{\bs\alpha}:=
\begin{pmatrix}
I_n & \bs\alpha \\
\bs{0}^t & 1
\end{pmatrix}\mb{Z}^{n+1}.
\end{equation*}
and for $\varepsilon>0$, $T>0$, and $0<R\leq 1$ let
\begin{equation*}
H_1:=\left\{\bs{x}\in\mb{R}^{n}:\prod_{i= 1}^{n}\left|x_{i}\right|<\varepsilon,\ |x_{i}|\leq R,\ i=1,\dotsc,n\right\}.
\end{equation*}
Let also
$$H_{1+}:=H_1\cap\{\bs x\in\mb R^{n}:x_i\neq 0\}.$$
In Section \ref{sec:tess}, we proved the following result.
\begin{prop}
\label{prop:partitionapp}
Suppose that $R^{{m}}/\varepsilon>e^{{m}}$, where $e=2.71828\dots$ is the base of the natural logarithm. Then there exist a set of indices $I$, a partition $H_{1+}=\bigcup_{\beta\in{I}}\! X_{\beta}$ of the set $H_{1+}$, and a collection of linear maps $\left\{\varphi_{\beta}\right\}_{\beta\in{{I}}}$ from $\mb{R}^{{m}}$ to itself, such that\vspace{2mm}
\begin{itemize}
\item[$i)$] $\#{I}\ll_{{m}}\log\left(R/\varepsilon^{1/{m}}\right)^{{m}-1}$;\vspace{2mm}
\item[$ii)$] the maps $\varphi_{\beta}$ for $\beta\in{I}$ are determined by the expression $\varphi_{\beta}(\bs{x})_{i}:=e^{a_{\beta,i}}\cdot x_{i}$ for
$i=1,\dotsc,{m}$, where the coefficients $a_{\beta,i}\in\mb{R}$ satisfy\vspace{2mm}
\begin{itemize}
\item[$iia)$] $e^{a_{\beta,i}}\gg_{{m}}\varepsilon^{1/{m}}/R$ for $i=1,\dotsc,{m}$;\vspace{2mm}
\item[$iib)$] $\sum_{i=1}^{{m}}a_{\beta,i}=0$;
\end{itemize}
\vspace{2mm}
\item[$iii)$] the sets $X_{\beta}$ are measurable and $\varphi_{\beta}\left(X_{\beta}\right)\subset\left[-c\varepsilon^{1/{m}},c\varepsilon^{1/{m}}\right]^{{m}}$ for all $\beta\in{I}$, where $c$ is a constant only depending on $m$.\vspace{2mm}
\end{itemize}
\end{prop}
Since the parameter $\varepsilon$ may change, let us write $H_1^{\varepsilon}$ and $H_{1+}^{\varepsilon}$ from now on, in place of $H_1$ (here and throughout $R=1/2$).

We notice that
\begin{equation*}
N^{*}\left(\bs{\alpha},T,a,b\right)\leq\#\left(\Lambda_{\bs\alpha}\cap H_{1+}^b\times[1,T]\right),
\end{equation*}
and it easily follows that
\begin{equation}
\label{eq:eq}
S_{2}^{*}(\bs\alpha,T)\leq \sum_{0\leq h\leq \log T}\sum_{k=-C\log\log T}^{(n+\varepsilon_0)\log\log T}e^{k+1}\#\left(\Lambda_{\bs\alpha}\cap \left(H_{1+}^{h,k}\times\left[1,e^h\right]\right)\right),
\end{equation}
where $H_{1+}^{h,k}:=H_{1+}^{e^{-k-h+1}}$.

We now require the following result.
\begin{lem}
\label{lem:correctvolume}
Let $T'\geq 1$ and $\varepsilon>0$. Then
$$\int_{[0,1)^n}\#\left(\Lambda_{\bs\alpha}\cap \left(H_{1+}^{\varepsilon}\times\left[1,T'\right]\right)\right)d\bs\alpha\ll \log\left(\frac{R^n}{\varepsilon}\right)^{n-1}\varepsilon T'.$$
\end{lem}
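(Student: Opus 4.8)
The plan is to evaluate the left-hand side exactly by unfolding over the lattice $\Lambda_{\bs\alpha}$, and then to bound $\Vol(H_1)$ by a classical computation.

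First I would unfold. A point of $\Lambda_{\bs\alpha}$ has the form $(\bs p+q\bs\alpha,q)$ with $(\bs p,q)\in\mb Z^{n}\times\mb Z$, and it belongs to $H_1\times[1,T']$ exactly when $q\in\{1,\dots,\lfloor T'\rfloor\}$ and $\bs p+q\bs\alpha\in H_1$; hence $\#(\Lambda_{\bs\alpha}\cap(H_1\times[1,T']))=\sum_{q=1}^{\lfloor T'\rfloor}\sum_{\bs p\in\mb Z^{n}}\mathbf{1}_{H_1}(\bs p+q\bs\alpha)$. Integrating in $\bs\alpha$ over $[0,1]^{n}$ and exchanging the (nonnegative) summations with the integral, for each fixed $q\ge1$ I substitute $u_i=p_i+q\alpha_i$, under which $d\bs\alpha=q^{-n}\,d\bs u$ and the boxes $\prod_i[p_i,p_i+q]$, $\bs p\in\mb Z^{n}$, tile $\mb R^{n}$ with constant multiplicity $q^{n}$. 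This gives $\sum_{\bs p\in\mb Z^{n}}\int_{[0,1]^{n}}\mathbf{1}_{H_1}(\bs p+q\bs\alpha)\,d\bs\alpha=q^{-n}\cdot q^{n}\Vol(H_1)=\Vol(H_1)$, independently of $q$, and summing over $1\le q\le\lfloor T'\rfloor$ yields $\int_{[0,1]^{n}}\#(\Lambda_{\bs\alpha}\cap(H_1\times[1,T']))\,d\bs\alpha=\lfloor T'\rfloor\,\Vol(H_1)\le T'\,\Vol(H_1)$. This is the same mechanism used in the proof of Lemma \ref{lem:L1formula}, only simpler because here $\Lambda_{\bs\alpha}$ is used directly, with no diagonal maps to carry through.

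It then remains to show $\Vol(H_1)\ll_n\varepsilon\log(R^{n}/\varepsilon)^{n-1}$. By the coordinatewise symmetry of $H_1$ and the scaling $\bs x=R\bs y$, one has $\Vol(H_1)=2^{n}R^{n}\,\Vol\{\bs y\in[0,1]^{n}:y_1\cdots y_n\le\varepsilon/R^{n}\}$. I would invoke the classical identity $\Vol\{\bs y\in[0,1]^{n}:y_1\cdots y_n\le\delta\}=\delta\sum_{j=0}^{n-1}\frac{(\log(1/\delta))^{j}}{j!}$, valid for $0<\delta\le1$ and proved by an immediate induction on $n$ (or by a Laplace-transform computation); since $a^{j}\le(1+a)^{n-1}$ for $0\le j\le n-1$ and $a\ge0$, this gives $\Vol\{\bs y\in[0,1]^{n}:\prod y_i\le\delta\}\ll_n\delta\,(1+\log(1/\delta))^{n-1}$ for $\delta\le1$, while for $\delta\ge1$ the volume is simply $1\le\delta$. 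Taking $\delta=\varepsilon/R^{n}$ and unwinding the scaling, both ranges are subsumed in $\Vol(H_1)\ll_n\varepsilon\,(\log\max\{R^{n}/\varepsilon,\,e\})^{n-1}$, which is exactly the asserted bound under the convention $\log x:=\log\max\{x,e\}$ in force throughout the paper. Combined with the unfolding identity, this proves the lemma.

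There is essentially no serious obstacle here: the argument is a routine unfolding followed by a textbook volume estimate. The only two points deserving a word of justification are the interchange of summation and integration (immediate from nonnegativity and Tonelli) and the degenerate regime $\varepsilon\ge R^{n}$, in which $H_1=[-R,R]^{n}$ and the bound is trivial, since then $\Vol(H_1)=(2R)^{n}\le2^{n}\varepsilon$ and $\log\max\{R^{n}/\varepsilon,e\}\ge1$.
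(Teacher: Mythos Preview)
Your proof is correct. Both your argument and the paper's rely on the same unfolding step---writing $\#(\Lambda_{\bs\alpha}\cap(H_1\times[1,T']))$ as a sum over $(\bs p,q)$, periodising in $\bs p$, and using the change of variables $y_i=p_i+q\alpha_i$ to reduce the $\bs\alpha$-integral to a Lebesgue measure computation. The difference lies in how the resulting volume is handled: the paper decomposes $H_{1+}$ via the tessellation of Proposition~\ref{thm:partition}, pushes each piece $X_\beta$ into the cube $[-c\varepsilon^{1/n},c\varepsilon^{1/n}]^n$ by the diagonal map $\tilde\varphi_\beta$, bounds each contribution by $\ll\varepsilon T'$, and then sums over $\#I\ll_n\log(R^n/\varepsilon)^{n-1}$ tiles. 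You bypass this machinery entirely by computing $\Vol(H_1)$ directly from the classical identity $\Vol\{\bs y\in[0,1]^n:\prod y_i\le\delta\}=\delta\sum_{j=0}^{n-1}(\log(1/\delta))^j/j!$. Your route is shorter and self-contained; the paper's route has the virtue of reusing infrastructure already in place for Theorem~\ref{prop:cor1}, but for this particular lemma that infrastructure is overkill. Your handling of the degenerate case $\varepsilon\ge R^n$ is also slightly cleaner than relying on the standing hypothesis $R^n/\varepsilon>e^n$ that Proposition~\ref{thm:partition} requires.
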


\begin{proof}
For each map $\varphi_{\beta}$ in Proposition \ref{prop:partitionapp}, denote by $\tilde{\varphi}_{\beta}$ the map $\varphi_{\beta}\times \textup{id}$ from $\mb R^{n+1}$ to itself. Then, by Proposition \ref{prop:partitionapp}, we may write
\begin{align}
& \int_{[0,1)^n}\#\left(\Lambda_{\bs\alpha}\cap \left(H_{1+}^{\varepsilon}\times\left[1,T'\right]\right)\right)d\bs\alpha\nonumber \\
& =\sum_{\beta\in I}\int_{[0,1)^n}\#\left(\tilde{\varphi}_{\beta}\Lambda_{\bs\alpha}\cap \tilde{\varphi}_{\beta}\left(X_\beta\times\left[1,T'\right]\right)\right)d\bs\alpha\nonumber \\
& = \sum_{\beta\in I}\sum_{p_1,\dotsc,p_{n},q\in\mb Z}\int_{[0,1)^n}\chi(e^{a_{\beta,1}}(\alpha_1 q+p_1),\dotsc,e^{a_{\beta,n}}(\alpha_n q+p_n),q)d\bs\alpha,\label{eq:86}
\end{align}
where $\chi$ denotes the characteristic function of the set $\left[-c\varepsilon^{1/n},c\varepsilon^{1/n}\right]^n\times [1,T']$.
The expression in \eqref{eq:86} can be further expanded as
\begin{multline}\sum_{\beta\in I}\sum_{q\in\mb Z}\sum_{k_1,\dotsc,k_n\in\mb Z}\sum_{p_{1}^0,\dotsc,p_{n}^0=0}^{q-1}\int_{[0,1)^n}\chi(e^{a_{\beta,1}}((\alpha_1+k_1)q+p_{1}^0),\dotsc,e^{a_{\beta,n}}((\alpha_n+k_n) q+p_{n}^0),q)d\bs\alpha \\
=\sum_{\beta\in I}\sum_{q\in\mb Z}\sum_{p_{1}^0,\dotsc,p_{n}^0=0}^{q-1}\int_{\mb R^n}\chi(e^{a_{\beta,1}}(x_1 q+p_{1}^0),\dotsc,e^{a_{\beta,n}}(x_n q+p_{n}^0),q)d\bs x.
\end{multline}
By the change of variables $y_i:=e^{a_{\beta,i}}(x_i q+p_{i}^0)$ and part $iib$ of Proposition \ref{prop:partitionapp}, we deduce that
\begin{multline*}
\int_{[0,1)^n}\#\left(\Lambda_{\bs\alpha}\cap \left(H_{1}\times\left[1,T'\right]\right)\right)d\bs\alpha\leq \sum_{\beta\in I}\sum_{q\in\mb Z}\frac{q^{n}}{q^n e^{\sum_{i}a_{\beta,i}}}\int_{\mb R^n}\chi(y_1,\dotsc,y_n,q)d\bs y \\
\ll_n \# I\cdot\varepsilon T'\ll_{n} \log\left(\frac{R^n}{\varepsilon}\right)^{n-1}\varepsilon T',
\end{multline*}
concluding the proof.
\end{proof}

From Lemma \ref{lem:correctvolume} and (\ref{eq:eq}), we conclude that for each fixed $0\leq h\leq \log T$ it holds that
\begin{multline}
 \int_{[0,1)^n}\sum_{k=-C\log\log T}^{(n+\varepsilon_0)\log\log T}e^{k+1}\#\left(\Lambda_{\bs\alpha}\cap \left(H_{1}^{h,k}\times\left[1,e^h\right]\right)\right)d\bs\alpha \\
 \ll_n \sum_{k=-C\log\log T}^{(n+\varepsilon_0)\log\log T}e^{k+1}|k+h|^{n-1}e^{-k-h}\cdot e^{h} \ll_{n,C}(\log\log T)^n\cdot h^{n-1}.\nonumber  
\end{multline}
This, in turn, implies that for all $A\leq B-1<B\leq\log T$ we have that
\begin{multline}
 \int_{[0,1)^n}\left|\sum_{A< h\leq B}\sum_{k=-C\log\log T}^{(n+\varepsilon_0)\log\log T}e^{k+1}\#\left(\Lambda_{\bs\alpha}\cap \left(H_{1}^{h,k}\times\left[1,e^h\right]\right)\right)\right|d\bs\alpha \\
 \ll_{n,C}(\log\log T)^{n}(B^n-A^n),  
\end{multline}
where we used the fact that
$$\sum_{A\leq h\leq B}h^{n-1}\ll_n B^n-A^n$$
for all $A\leq B-1$.

The following Proposition is yet another variation on Schmidt's method, which will be proved in Subsection \ref{sec:Schmidtdual}.

\begin{prop}
\label{prop:Schmidtdual}
Let $(Y,\nu)$ be a probability space and let $r\geq 1$. For $n\in \mb Z$ ($n\geq 0$) let $f_{n}:Y\to \mb R$ be a family of measurable functions and assume that for any choice of $0\leq A<B\leq N$, with $N\geq 1$, it holds that
\begin{equation}
\label{eq:Schmidtdual}
\int_Y\left|\sum_{A\leq n\leq B}f_{\bs n}(y)\right|d\nu\leq g(N)(B^r-A^r),
\end{equation}
where $g:(1,+\infty]\to[1,+\infty]$ is increasing and such that $g(2x)\ll 2g(x)$ for all $x\geq 1$.
Then, for almost every $y\in Y$, for every $\eta>0$ and $N\geq 1$  it holds that
$$\left|\sum_{n\leq N}f_{ n}(y)\right|\ll_{y,\eta,d}g(N)(\log N)^{2+\eta}N^{r}.$$
\end{prop}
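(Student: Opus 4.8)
The plan is to \emph{deduce Proposition~\ref{prop:Schmidtdual} from Proposition~\ref{prop:Schmidt}} applied in dimension $d=1$, the only new ingredient being the elementary bound
\begin{equation*}
B^{r}-A^{r}\leq rN^{r-1}(B-A)\qquad\text{for all }0\leq A<B\leq N,\ r\geq 1,
\end{equation*}
which follows from the mean value theorem and the monotonicity of $x\mapsto x^{r-1}$. In effect this replaces the position-dependent weight $B^{r}-A^{r}$ in \eqref{eq:Schmidtdual} by the position-independent weight $B-A$, at the cost of inflating $g$ by the factor $rN^{r-1}$.

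Concretely, I would set $\tilde g(x):=r\,x^{r-1}g(x)$ for $x\in(1,+\infty]$. Since $r\geq 1$ and $g$ is increasing with values in $[1,+\infty]$, the function $\tilde g$ is again increasing, takes values in $[1,+\infty]$, and is doubling (with a constant depending only on $r$, which is harmless as $r$ is fixed throughout). Combining the displayed inequality with \eqref{eq:Schmidtdual}, for all $0\leq A<B\leq N$ with $N\geq 1$ we obtain
\begin{equation*}
\int_{Y}\Bigl|\sum_{A\leq n\leq B}f_{n}(y)\Bigr|\,d\nu\leq g(N)\bigl(B^{r}-A^{r}\bigr)\leq \tilde g(N)\,(B-A),
\end{equation*}
which is precisely hypothesis \eqref{eq:Schmidt} for the one-parameter family $(f_{n})_{n\geq 0}$ with $g$ replaced by $\tilde g$. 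Proposition~\ref{prop:Schmidt} with $d=1$ then yields, for every $\eta>0$, every $N\geq 1$, and almost every $y\in Y$,
\begin{equation*}
\Bigl|\sum_{n\leq N}f_{n}(y)\Bigr|\ll_{y,\eta}\tilde g(N)\,N\,(\log N)^{2+\eta}=r\,g(N)\,N^{r}(\log N)^{2+\eta}\ll_{y,\eta,r}g(N)\,N^{r+1}(\log N)^{2+\eta},
\end{equation*}
where in the last step we used $N^{r}\leq N^{r+1}$. This is the assertion of Proposition~\ref{prop:Schmidtdual} (in fact with the slightly stronger exponent $N^{r}$ in place of $N^{r+1}$).

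If a self-contained argument is preferred --- which is in any case what one must run to prove Proposition~\ref{prop:Schmidt} itself --- one applies the dyadic Borel--Cantelli device directly. Writing $S_{N}(y):=\sum_{n\leq N}f_{n}(y)$ and, for $2^{J-1}\leq N<2^{J}$, decomposing the index range into $O(\log N)$ dyadic blocks $(m2^{k},(m+1)2^{k}]$ with right endpoint $\leq 2^{J}$, one has $S_{N}=\sum\bigl(S_{(m+1)2^{k}}-S_{m2^{k}}\bigr)$. For fixed $J$ and fixed scale $k\leq J$ the blocks $(m2^{k},(m+1)2^{k}]$ with $m=0,\dots,2^{J-k}-1$ tile $(0,2^{J}]$, so \eqref{eq:Schmidtdual} and a telescoping sum give
\begin{equation*}
\sum_{m=0}^{2^{J-k}-1}\int_{Y}\bigl|S_{(m+1)2^{k}}(y)-S_{m2^{k}}(y)\bigr|\,d\nu\leq g(2^{J})\,2^{kr}\sum_{m=0}^{2^{J-k}-1}\bigl((m+1)^{r}-m^{r}\bigr)=g(2^{J})\,2^{Jr},
\end{equation*}
a bound \emph{independent of the scale} $k$. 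Markov's inequality with the common threshold $\lambda_{J}:=g(2^{J})2^{Jr}(J+1)^{2+\eta}$ then bounds the $\nu$-measure of the set on which some block of any scale $k\leq J$ exceeds $\lambda_{J}$ by $(J+1)\cdot g(2^{J})2^{Jr}/\lambda_{J}=(J+1)^{-1-\eta}$, which is summable in $J$; Borel--Cantelli and a summation over the $O(\log N)$ blocks forming $S_{N}$ (using the doubling of $g$ to pass from $g(2^{J})2^{Jr}$ back to $g(N)N^{r}$) then complete the argument. The step I expect to require the most care is precisely this Borel--Cantelli bookkeeping --- controlling, \emph{simultaneously} and for almost every $y$, about $2^{J-k}$ blocks at each of the $\approx\log N$ scales from $L^{1}$ data alone --- and the feature that makes it succeed is the scale-independence of the telescoped $L^{1}$-mass $g(2^{J})2^{Jr}$, which lets a single threshold $\lambda_{J}$ serve all scales. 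Along the first route this difficulty is entirely subsumed in the proof of Proposition~\ref{prop:Schmidt}, and the deduction of Proposition~\ref{prop:Schmidtdual} becomes purely formal.
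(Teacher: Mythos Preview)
Your proof is correct. The first route --- reducing to Proposition~\ref{prop:Schmidt} with $d=1$ via the mean-value inequality $B^{r}-A^{r}\leq rN^{r-1}(B-A)$ --- is a genuinely different and more economical approach than the paper's. The paper instead reruns the Schmidt machinery directly: it proves the analogue of Lemma~\ref{lem:A1} by exploiting the telescoping identity $\sum_{b<2^{s-a}}\bigl((b+1)^{r}-b^{r}\bigr)=2^{(s-a)r}$ to obtain $\sum_{I\in L_{s}}\int_{Y}\bigl|\sum_{n\in I}f_{n}\bigr|\leq g(2^{s})\,s\,2^{rs}$, then repeats the Chebyshev/Borel--Cantelli steps verbatim. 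Your second (self-contained) sketch is essentially this argument. Both routes yield the same bound $g(N)N^{r}(\log N)^{2+\eta}$, which, as you note, is one power of $N$ stronger than the stated $N^{r+1}$. What your reduction buys is brevity: it avoids duplicating the dyadic bookkeeping and makes transparent that the $r$-dependence enters only through the doubling constant of $\tilde g$. What the paper's direct approach buys is self-containment and a slightly cleaner constant (the telescoping is exact, whereas the mean-value bound loses a factor of $r$), though this is immaterial here.
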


On applying Proposition \ref{prop:Schmidtdual} with $N=\log T$, $n=h$ (where $n$ is the index in Proposition \ref{prop:Schmidtdual}), $r=n$ (where $n$ is the dimension of the space $\mb R^n$ in Theorem \ref{thm:BHV}), $g(x)\asymp_{n,C}(\log x)^n$, and
$$f_h(\alpha)=\sum_{k=-C\log\log T}^{(n+\varepsilon_0)\log\log T}e^{k+1}\#\left(\Lambda_{\bs\alpha}\cap \left(H_{1}^{h,k}\times\left[1,e^h\right]\right)\right)$$
we obtain that that for any $\eta>0$ and almost every $\bs\alpha\in[0,1)^{n}$
$$S_{2}^{*}(\bs\alpha,T)\ll_{\bs\alpha,\eta,n,C}(\log\log T)^{n+2+\eta}(\log T)^{n}.$$
This proves Lemma \ref{lem:range2starapp}.

\subsection{Proof of Proposition \ref{prop:Schmidt}}
\label{sec:Schmidt}

For $s\in\mb N$ define
$$L_s:=\left\{\left(2^a b,2^a(b+1)\right]:a,b=0,1,2\dotsc,\mbox{ and } 2^a(b+1)<2^s\right\}.$$

\begin{lem}
\label{lem:A1}
For any $s_1,\dotsc, s_d\in\mb N$ we have that
$$\sum_{I_i\in L_{s_i}\ i=1,\dotsc,d}\int_{Y}\left|\sum_{\bs n\in I_1\times\dotsb\times I_d}f_{\bs n}(y)\right|\leq g(2^{s_1+\dotsb +s_d})s_1\dotsm s_d 2^{s_1+\dotsb +s_d}.$$
\end{lem}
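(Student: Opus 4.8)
The plan is to apply the hypothesis (\ref{eq:Schmidt}) of Proposition~\ref{prop:Schmidt} separately to each product of dyadic intervals, and then to run an elementary dyadic count. All that is needed from $g$ here is that it be non-decreasing (the doubling condition $g(2x)\ll 2g(x)$ only enters later, when summing over the scales $s$).

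First I would fix $s_1,\dots,s_d$ and a $d$-tuple of intervals $I_i=\bigl(2^{a_i}b_i,\,2^{a_i}(b_i+1)\bigr]\in L_{s_i}$. The integers lying in $I_i$ form a consecutive block of length $2^{a_i}$, which I write as $\{n_i:A_i<n_i\le B_i\}$ with $A_i:=2^{a_i}b_i$ and $B_i:=2^{a_i}(b_i+1)$; thus $0\le A_i<B_i$, $B_i-A_i=2^{a_i}$, and since $2^{a_i}(b_i+1)<2^{s_i}$ we have $B_i\le 2^{s_i}=:N_i$. Feeding this choice of $N_i,A_i,B_i$ into (\ref{eq:Schmidt}), and using that $g$ is non-decreasing so that $g(B_1\cdots B_d)\le g(N_1\cdots N_d)=g(2^{s_1+\dots+s_d})$, yields
\[
\int_{Y}\Bigl|\sum_{\bs n\in I_1\times\dots\times I_d}f_{\bs n}(y)\Bigr|\,d\nu\ \le\ g\bigl(2^{s_1+\dots+s_d}\bigr)\,2^{a_1+\dots+a_d}.
\]

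Next I would sum this over all admissible tuples of intervals. The left-hand side of the asserted inequality is then at most $g(2^{s_1+\dots+s_d})$ times
\[
\sum_{\substack{I_i\in L_{s_i}\\ i=1,\dots,d}}2^{a_1+\dots+a_d}\ =\ \prod_{i=1}^{d}\Bigl(\sum_{I\in L_{s_i}}2^{a}\Bigr),
\]
so it is enough to bound $\sum_{I\in L_s}2^{a}$ for one index $s$. Here $a$ runs over $\{0,1,\dots,s-1\}$, and for fixed $a$ the admissible values of $b\ge 0$ (those with $2^{a}(b+1)<2^{s}$) number $2^{s-a}-1\le 2^{s-a}$; hence $\sum_{I\in L_s}2^{a}\le\sum_{a=0}^{s-1}2^{a}\cdot 2^{s-a}=s\,2^{s}$. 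Taking the product over $i$ and multiplying by $g(2^{s_1+\dots+s_d})$ gives precisely
\[
\sum_{\substack{I_i\in L_{s_i}\\ i=1,\dots,d}}\int_{Y}\Bigl|\sum_{\bs n\in I_1\times\dots\times I_d}f_{\bs n}(y)\Bigr|\,d\nu\ \le\ g\bigl(2^{s_1+\dots+s_d}\bigr)\,s_1\cdots s_d\,2^{s_1+\dots+s_d},
\]
which is the claim; the degenerate case in which some $s_i=0$ is trivial, since then $L_{s_i}=\emptyset$ and both sides vanish.

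I do not expect a genuine obstacle here: this is a bookkeeping step. The only points to watch are the uniform choice $N_i=2^{s_i}$, which together with the monotonicity of $g$ makes the per-box estimate coming from (\ref{eq:Schmidt}) independent of $a_i$ and $b_i$, and the matching of the index range in (\ref{eq:Schmidt}) to the half-open dyadic intervals of $L_{s_i}$ — including the singleton intervals with $a_i=0$, for which one still has $B_i-A_i=1\le 2^{a_i}$ and $B_i\le 2^{s_i}$.
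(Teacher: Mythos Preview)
Your proof is correct and follows the same approach as the paper's: apply hypothesis~(\ref{eq:Schmidt}) to each product box $I_1\times\dots\times I_d$ with $N_i=2^{s_i}$, then carry out the elementary count $\sum_{I\in L_s}|I|\le s\,2^{s}$. The paper compresses this into a single sentence, while you spell out the details; your remark invoking monotonicity of $g$ is unnecessary (the hypothesis already delivers $g(N_1\cdots N_d)$ directly once you fix $N_i=2^{s_i}$), but it does no harm.
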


\begin{proof}
From (\ref{eq:Schmidt}) it follows that
$$\int_{Y}\left|\sum_{\bs n\in I_1\times\dotsb\times I_d}f_{\bs n}(y)\right|\leq g(2^{s_1+\dotsb +s_d})\cdot |I_1|\dotsm|I_d|.$$
To conclude, it suffices to observe that
$$\sum_{a\leq s}\sum_{b\leq 2^{s-a}}\left(2^{a}(b+1)-2^{a}b\right)\ll s2^{s}.$$    
\end{proof}

Using base $2$ expansion, one can prove the following lemma (see also \cite[Lemma 1]{Sch60}).

\begin{lem}
\label{lem:A2}
Let $k,s\in\mb N$ with $k<2^s$. Then, the interval $[0,k]$ is covered by at most $s$ disjoint intervals in the family $L_s$.
\end{lem}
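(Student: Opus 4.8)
The plan is to argue via the base-$2$ expansion of $k$, following the idea of \cite[Lemma 1]{Sch60}. First I would write $k=2^{a_1}+2^{a_2}+\dots+2^{a_t}$ with $a_1>a_2>\dots>a_t\geq 0$; since $0\leq k<2^{s}$, all exponents lie in $\{0,1,\dots,s-1\}$, and in particular the number of summands satisfies $t\leq s$. Then I would introduce the partial sums $k_0:=0$ and $k_j:=2^{a_1}+\dots+2^{a_j}$ for $j=1,\dots,t$, so that $k_t=k$ and $k_j-k_{j-1}=2^{a_j}$.

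Next I would decompose $(0,k]=\bigcup_{j=1}^{t}(k_{j-1},k_j]$, the pieces being pairwise disjoint as consecutive half-open intervals, and check that each $(k_{j-1},k_j]$ belongs to $L_s$. The key point is that $k_{j-1}=2^{a_1}+\dots+2^{a_{j-1}}$ is divisible by $2^{a_j}$, because every exponent occurring there is strictly larger than $a_j$; writing $b_j:=k_{j-1}/2^{a_j}\in\{0,1,2,\dots\}$ we get $(k_{j-1},k_j]=(2^{a_j}b_j,\,2^{a_j}(b_j+1)]$. Its right endpoint equals $k_j\leq k<2^{s}$, so the defining condition $2^{a_j}(b_j+1)<2^{s}$ of $L_s$ holds. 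Hence $(0,k]$ is covered by the $t\leq s$ disjoint intervals $(k_{j-1},k_j]\in L_s$, which is the assertion; the single point $\{0\}$ plays no role here (if one insists on covering it, it contributes only one separately controlled term and can be ignored).

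There is no genuine obstacle in this lemma: it is a routine binary-expansion argument. The only two points that require a moment's care are (i) the divisibility $2^{a_j}\mid k_{j-1}$, which is exactly what guarantees that each piece is a dyadic interval of the precise shape $(2^{a}b,2^{a}(b+1)]$ with integer parameters, and (ii) the strict inequality $<2^{s}$ in the definition of $L_s$, which is fine because the right endpoints of the constructed intervals never exceed $k<2^{s}$.
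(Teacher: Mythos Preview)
Your proof is correct and is exactly the base-$2$ expansion argument the paper indicates (the paper itself does not spell out the details, merely writing ``Using base $2$ expansion, one can prove the following lemma (see also \cite[Lemma 1]{Sch60})''). Your handling of the two delicate points---the divisibility $2^{a_j}\mid k_{j-1}$ and the strict inequality $k_j\leq k<2^s$---is precisely what is needed, and your remark about the endpoint $0$ is apt since the intervals in $L_s$ are half-open and the lemma is only applied to cover $(0,N_i]$ in Lemma~\ref{lem:A3}.
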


The combination of the previous two lemmas, allows us to prove the subsequent result.

\begin{lem}
\label{lem:A3}
For any $s_1,\dotsc,s_d\in\mb N$ and any $\eta>0$ there exists a subset $Y_{s_1,\dotsc,s_d,\eta}$ of $Y$ such that
\begin{itemize}
    \item[$i)$] $\nu(Y_{s_1,\dots,s_d,\eta})\leq (s_1\dotsm s_d)^{-1-\eta}$;\vspace{2mm}
    \item[$ii)$] for all $N_1,\dots,N_d\in\mb N$ with $N_i<2^{s_i}$ ($i=1,\dotsc,d$) and all $y\notin Y_{s_1,\dotsc,s_d,\eta}$ it holds that
    $$\left|\sum_{n_i\leq N_i\ i=1,\dotsc,d}f_{\bs n}(y)\right|\leq g(2^{s_1+\dotsb +s_d})(s_1\dotsm s_d)^{2+\eta}2^{s_1+\dotsb +s_d}.$$
\end{itemize}
\end{lem}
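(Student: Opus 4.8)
The plan is to take $Y_{s_1,\dotsc,s_d,\eta}$ to be the exceptional set on which the \emph{full} sum appearing in Lemma~\ref{lem:A1} overshoots its average by a factor $(s_1\dotsm s_d)^{1+\eta}$, and then to deduce both assertions almost mechanically: part~$i)$ from Markov's inequality combined with Lemma~\ref{lem:A1}, and part~$ii)$ by chopping each interval $[0,N_i]$ into dyadic blocks via Lemma~\ref{lem:A2}.

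Concretely, writing $\Sigma:=s_1+\dotsb+s_d$ and $P:=s_1\dotsm s_d$, I would set
$$Y_{s_1,\dotsc,s_d,\eta}:=\left\{y\in Y:\sum_{I_i\in L_{s_i}\ i=1,\dotsc,d}\left|\sum_{\bs n\in I_1\times\dotsb\times I_d}f_{\bs n}(y)\right|>g(2^{\Sigma})P^{2+\eta}2^{\Sigma}\right\}.$$
Lemma~\ref{lem:A1} states precisely that the non-negative integrand above has $\nu$-integral at most $g(2^{\Sigma})P\,2^{\Sigma}$, so Markov's inequality gives
$$\nu(Y_{s_1,\dotsc,s_d,\eta})\leq\frac{g(2^{\Sigma})P\,2^{\Sigma}}{g(2^{\Sigma})P^{2+\eta}2^{\Sigma}}=P^{-1-\eta},$$
which is exactly $i)$.

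For $ii)$, I would fix $y\notin Y_{s_1,\dotsc,s_d,\eta}$ and $N_1,\dotsc,N_d\in\mb N$ with $N_i<2^{s_i}$. By Lemma~\ref{lem:A2}, for each $i$ one may write $[0,N_i]$ as a disjoint union of intervals $I_i^{(1)},\dotsc,I_i^{(r_i)}\in L_{s_i}$ with $r_i\leq s_i$. Expanding
$$\sum_{n_i\leq N_i\ i=1,\dotsc,d}f_{\bs n}(y)=\sum_{j_1=1}^{r_1}\dotsm\sum_{j_d=1}^{r_d}\ \sum_{\bs n\in I_1^{(j_1)}\times\dotsb\times I_d^{(j_d)}}f_{\bs n}(y)$$
and applying the triangle inequality, one bounds the left-hand side by the sum of the block-sums over all product boxes $I_1^{(j_1)}\times\dotsb\times I_d^{(j_d)}$. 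Since each such box is a distinct member of $L_{s_1}\times\dotsb\times L_{s_d}$ and every block-sum enters with an absolute value, non-negativity lets us dominate this by the full sum defining $Y_{s_1,\dotsc,s_d,\eta}$, hence by $g(2^{\Sigma})P^{2+\eta}2^{\Sigma}$ since $y\notin Y_{s_1,\dotsc,s_d,\eta}$; this is the bound claimed in $ii)$.

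The argument is essentially bookkeeping, and the only point requiring a little care is the last step: one must observe that the product boxes tiling $[0,N_1]\times\dotsb\times[0,N_d]$ form a sub-collection of $L_{s_1}\times\dotsb\times L_{s_d}$, so that non-negativity of the absolute-value terms permits passing from this sub-sum to the full sum of Lemma~\ref{lem:A1}. (Indices with some $n_i=0$, not literally swept out by the half-open intervals of $L_{s_i}$, contribute $O(1)$ and can be absorbed or handled by adjoining a degenerate block.)
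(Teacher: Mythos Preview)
Your proof is correct and follows essentially the same approach as the paper's own argument: define the exceptional set via the threshold on the full block-sum from Lemma~\ref{lem:A1}, apply Markov/Chebyshev for part~$i)$, and use the dyadic decomposition of Lemma~\ref{lem:A2} together with non-negativity to pass from the sub-sum to the full sum for part~$ii)$. The only cosmetic differences are your use of strict versus non-strict inequality in the definition of the exceptional set and your explicit remark about the $n_i=0$ boundary, neither of which affects the argument.
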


\begin{proof}
Let
\begin{multline}
Y_{s_1,\dotsc,s_d,\eta}:= \\
\left\{y\in Y: \sum_{I_i\in L_{s_i}\ i=1,\dotsc,d}\left|\sum_{\bs n\in I_1\times\dotsb\times I_d}f_{\bs n}(y)\right|\geq g(2^{s_1+\dotsb +s_d})(s_1\dotsm s_d)^{2+\eta}2^{s_1+\dotsb +s_d}\right\}.\nonumber
\end{multline}
By Lemma \ref{lem:A1} and Chebychev's inequality, we deduce that $\nu(Y_{s_1,\dotsc,s_d,\eta})\leq (s_1\dotsm s_d)^{1+\eta}$. Now, fix $N_i\leq 2^{s_i}$ for $i=1,\dotsc,d$. Then, by Lemma \ref{lem:A2}, for $i=1,\dotsc,d$ there exists a family of disjoint intervals $L(N_i)\subset L_{s_i}$ covering the interval $(0,N_i]$. It follows that for $y\notin Y_{s_1,\dotsc,s_d,\eta}$ one has that
\begin{multline}
\left|\sum_{n_i\leq N_i\ i=1,\dotsc,d}f_{\bs n}(y)\right|\leq \sum_{I_i\in L(N_i)\ i=1,\dotsc,d}\left|\sum_{\bs n \in I_1\times\dotsb\times I_d}f_{\bs n}(y)\right| \\
\leq \sum_{I_i\in L_{s_i}\ i=1,\dotsc,d}\left|\sum_{\bs n \in I_1\times\dotsb\times I_d}f_{\bs n}(y)\right|\leq g(2^{s_1+\dotsb +s_d})(s_1\dotsm s_d)^{2+\eta}2^{s_1+\dotsb +s_d}.\nonumber 
\end{multline}
\end{proof}

By part $(i)$ of Lemma \ref{lem:A3} and the Borel-Cantelli Lemma, for almost every $y\in Y$ there are only finitely many parameters $s_1,\dots,s_d$ for which $y\in Y_{s_1,\dotsc,s_d,\eta}$. Let $y\in Y$ with this property, and assume that $y\notin Y_{s_1,\dots,s_d,\eta}$ for all $s_1,\dots,s_d$ with $\max_i s_i\geq A(y)$. For fixed $N_1,\dotsc,N_d$ pick $s_i$ such that $2^{s_i-1}\leq N_i< 2^{s_i}$ for $i=1,\dotsc,d$. Then, by part $(ii)$ of Lemma \ref{lem:A3}, one has that
\begin{multline}
\left|\sum_{n_i\leq N_i\ i=1,\dotsc,d}f_{\bs n}(y)\right|\leq g(2^{s_1+\dotsb +s_d})(s_1\dotsm s_d)^{2+\eta}2^{s_1+\dotsb +s_d} \\
+\underbrace{\max_{N_i'\leq 2^{A(y)}}\left|\sum_{n_i\leq N_i'\ i=1,\dotsc,d}f_{\bs n}(y)\right|}_{c(y)}.\nonumber
\end{multline}
Since $g(2x)\ll 2g(x)$ for all $x\geq 1$, we find that for all $N_1,\dotsc,N_d\geq 1$ it holds
$$\left|\sum_{n_i\leq N_i\ i=1,\dotsc,d}f_{\bs n}(y)\right|\ll \max\{1,c(y)\}\cdot g(N_1\dotsm N_d)\log(N_1\dotsm N_d)^{d(2+\eta)}N_1\dotsm N_d.$$

\subsection{Proof of Proposition \ref{prop:Schmidtdual}}
\label{sec:Schmidtdual}

In this subsection, we will once again use the sets $L_s$ introduced in Subsection \ref{sec:Schmidt}.

\begin{lem}
\label{lem:A1dual}
Let $s\in\mb N$. Then we have that
$$\sum_{I\in L_{s}}\int_{Y}\left|\sum_{n\in I}f_{n}(y)\right|\leq g(2^{s})s2^{sr}.$$
\end{lem}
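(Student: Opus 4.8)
The plan is to follow the proof of Lemma~\ref{lem:A1}, keeping track of the exponent $r$. Fix $s\in\mb N$ and recall that $L_s$ consists of the intervals $I_{a,b}:=\left(2^ab,2^a(b+1)\right]$ with $a,b$ non-negative integers satisfying $2^a(b+1)<2^s$; in particular $a$ ranges over $0,1,\dotsc,s-1$, and for each such $a$ the admissible values of $b$ are $0\leq b\leq 2^{s-a}-2$.

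For a fixed interval $I_{a,b}\in L_s$ I would apply the hypothesis~(\ref{eq:Schmidtdual}) with $A=2^ab$, $B=2^a(b+1)$, and $N=2^s$ (the condition $B\leq N$ being precisely $2^a(b+1)<2^s$), which gives
\[
\int_Y\left|\sum_{n\in I_{a,b}}f_n(y)\right|d\nu\leq g(2^s)\left(\left(2^a(b+1)\right)^r-\left(2^ab\right)^r\right)=g(2^s)\,2^{ar}\left((b+1)^r-b^r\right).
\]
(To be scrupulous about the half-open endpoint one replaces $A$ by $2^ab+\tfrac12$, which only sharpens the inequality.) Summing this over all $I\in L_s$, i.e.\ over all admissible pairs $(a,b)$, and using the telescoping identity $\sum_{b=0}^{2^{s-a}-2}\left((b+1)^r-b^r\right)=(2^{s-a}-1)^r\leq 2^{(s-a)r}$, I obtain
\[
\sum_{I\in L_s}\int_Y\left|\sum_{n\in I}f_n(y)\right|d\nu\leq g(2^s)\sum_{a=0}^{s-1}2^{ar}\cdot 2^{(s-a)r}=g(2^s)\sum_{a=0}^{s-1}2^{rs}=g(2^s)\,s\,2^{rs},
\]
which is exactly the claimed bound.

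There is essentially no serious obstacle here: the argument is the one-dimensional, weighted analogue of Lemma~\ref{lem:A1}, and the only points requiring (minimal) care are the bookkeeping of the ranges of $a$ and $b$ and the harmless off-by-one stemming from the half-open intervals. The heart of the computation is the telescoping sum $\sum_b\left((b+1)^r-b^r\right)$, which collapses the dependence on $b$ and leaves the clean geometric sum in $a$; this is where the factor $2^{rs}$ (rather than $2^s$) enters, and it is ultimately responsible for the exponent $r+1$ appearing in Proposition~\ref{prop:Schmidtdual}.
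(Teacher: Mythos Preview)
Your proof is correct and follows the same approach as the paper, which simply records that the lemma follows from hypothesis~(\ref{eq:Schmidtdual}) together with the estimate $\sum_{a\leq s}\sum_{b\leq 2^{s-a}}2^{ar}\bigl((b+1)^r-b^r\bigr)\ll_r s\,2^{rs}$. Your version is in fact slightly sharper, since by tracking the exact ranges of $a$ and $b$ and telescoping you obtain the stated bound with $\leq$ rather than $\ll_r$.
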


\begin{proof}
By (\ref{eq:Schmidtdual}), for any interval $I$ of the form $I=(A,B]$ we have that
$$\int_{Y}\left|\sum_{n\in I}f_{n}(y)\right|\leq g(2^s)(B^{r}-A^r).$$
To conclude, it suffices to observe that
$$\sum_{a\leq s}\sum_{b\leq 2^{s-a}}\left(2^{ar}(b+1)^r-2^{ar}b^r\right)\ll_r s2^{sr}.$$    
\end{proof}

\begin{lem}
\label{lem:A3dual}
For any $s\in\mb N$ and any $\eta>0$ there exists a subset $Y_{s,\eta}$ of $Y$ such that
\begin{itemize}
    \item[$i)$] $\nu(Y_{s,\eta})\leq s^{-1-\eta}$;\vspace{2mm}
    \item[$ii)$] for all $N\in\mb N$ with $N<2^{s}$ and all $y\notin Y_{s,\eta}$ it holds that
    $$\left|\sum_{n\leq N}f_{n}(y)\right|\leq g(2^{s})s^{2+\eta}2^{sr}.$$
\end{itemize}
\end{lem}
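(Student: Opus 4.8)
The plan is to reproduce, in the one-index setting, the argument proving Lemma~\ref{lem:A3}, now using Lemma~\ref{lem:A1dual} in place of Lemma~\ref{lem:A1} and Lemma~\ref{lem:A2} verbatim. Concretely, for $s\in\mb N$ and $\eta>0$ I would take as the exceptional set
$$Y_{s,\eta}:=\left\{y\in Y:\ \sum_{I\in L_s}\left|\sum_{n\in I}f_n(y)\right|\ \geq\ g(2^s)\,s^{2+\eta}\,2^{rs}\right\},$$
the exponent $2+\eta$ on $s$ being the choice that makes both required estimates work out, and then check $i)$ and $ii)$ in turn.

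For $i)$: the integrand $\sum_{I\in L_s}\left|\sum_{n\in I}f_n(y)\right|$ is nonnegative and, by Lemma~\ref{lem:A1dual}, has integral at most $g(2^s)\,s\,2^{rs}$, so Chebyshev's inequality against the threshold in the definition of $Y_{s,\eta}$ gives
$$\nu(Y_{s,\eta})\ \leq\ \frac{g(2^s)\,s\,2^{rs}}{g(2^s)\,s^{2+\eta}\,2^{rs}}\ =\ s^{-1-\eta},$$
which is the measure bound claimed in $i)$.

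For $ii)$: fix $N<2^s$. By Lemma~\ref{lem:A2} the interval $[0,N]$ is covered by a subfamily $L(N)\subseteq L_s$ of pairwise disjoint dyadic intervals (at most $s$ of them, though the exact count is irrelevant here). For any $y\notin Y_{s,\eta}$, the triangle inequality followed by passing from the index set $L(N)$ to the larger set $L_s$ --- legitimate since each quantity $\left|\sum_{n\in I}f_n(y)\right|$ is nonnegative --- gives
$$\left|\sum_{n\leq N}f_n(y)\right|\ \leq\ \sum_{I\in L(N)}\left|\sum_{n\in I}f_n(y)\right|\ \leq\ \sum_{I\in L_s}\left|\sum_{n\in I}f_n(y)\right|\ <\ g(2^s)\,s^{2+\eta}\,2^{rs},$$
where the last step is exactly the negation of the defining inequality of $Y_{s,\eta}$. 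This yields the bound in $ii)$ (with strict inequality, hence a fortiori with $\leq$).

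I do not anticipate any genuine obstacle: Lemma~\ref{lem:A3dual} is the word-for-word one-dimensional analogue of Lemma~\ref{lem:A3}, and the one point requiring a little care is the calibration just described --- one power of $s$ in the threshold is needed to cancel the $s$ coming from Lemma~\ref{lem:A1dual}, and the remaining $s^{1+\eta}$ is what makes $\sum_s\nu(Y_{s,\eta})$ summable (needed for the Borel--Cantelli step downstream). A minor bookkeeping point is that the intervals in $L_s$ do not include $0$, so if one literally keeps the term $n=0$ in $\sum_{n\leq N}f_n$ it must be carried separately as a single bounded quantity, absorbed exactly as the constant $c(y)$ is in the proof of Proposition~\ref{prop:Schmidt}. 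Once Lemma~\ref{lem:A3dual} is available, Proposition~\ref{prop:Schmidtdual} follows by the same two-step scheme as Proposition~\ref{prop:Schmidt}: use $i)$ and Borel--Cantelli to see that a.e.\ $y$ lies outside $Y_{s,\eta}$ for all large $s$, then for a given $N$ choose $s$ with $2^{s-1}\leq N<2^s$, invoke $ii)$, and simplify using $s\asymp\log N$, $2^{rs}\ll N^{r+1}$, and the doubling hypothesis $g(2^s)\ll g(N)$.
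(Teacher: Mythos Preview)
Your proposal is correct and follows essentially the same argument as the paper: define $Y_{s,\eta}$ via the threshold $g(2^s)s^{2+\eta}2^{rs}$ on the sum $\sum_{I\in L_s}\left|\sum_{n\in I}f_n\right|$, apply Lemma~\ref{lem:A1dual} with Chebyshev for part~$i)$, and use Lemma~\ref{lem:A2} plus the triangle inequality for part~$ii)$. The only cosmetic difference is that the paper uses a strict inequality $>$ in defining $Y_{s,\eta}$ while you use $\geq$; this is immaterial.
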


\begin{proof}
Let
$$Y_{s,\eta}:=\left\{y\in Y:\sum_{I\in L_s}\left|\sum_{n\in I}f_{\bs n}(y)\right|> g(2^{s})s^{2+\eta}2^{sr} \right\}.$$
By Lemma \ref{lem:A1dual} and Chebyshev's Inequality we have that $\nu(Y_{s,\eta})\leq s^{-1-\eta}$, as required. Now, fix $N< 2^s$. By Lemma \ref{lem:A2}, the interval $[0,N]$ may be covered by a sub-collection of intervals $L(N)\subset L_s$. It follows that for $y\notin Y_{s,\eta}$ we have
\begin{equation*}
\left|\sum_{n\leq N}f_{n}(y)\right|\leq \sum_{I\in L(N)}\left|\sum_{n\in I}f_{n}(y)\right| \leq \sum_{I\in L_s}\left|\sum_{n\in I}f_{n}(y)\right|\leq g(2^s)s^{2+\eta}2^{sr}. 
\end{equation*}
\end{proof}

To conclude the proof of Proposition \ref{prop:Schmidtdual}, we observe that by the Borel-Cantelli Lemma, for almost every $y\in Y$ and any fixed $\eta>0$ there are only finitely $s$ such that $y\notin Y_{s,\eta}$. Pick $y$ with this property and assume that $y\notin Y_{s,\eta}$ for all $s\geq A(y)$. Fix $N\geq 1$ and $s$ such that $2^{s-1}\leq N<2^{s}$. Then we have that
$$\left|\sum_{n\leq N}f_{n}(y)\right|\leq g(2^s)s^{2+\eta}2^{sr}+\underbrace{\max_{N'\leq 2^{A(y)}}\left|\sum_{n\leq N'}f_{n}(y)\right|}_{c(y)},$$
whence, by the properties of $g$, we deduce that
$$\left|\sum_{n\leq N}f_{\bs n}(y)\right|\leq \max\{1,c(y)\}\cdot g(N)(\log N)^{2+\eta}N^r.$$

\section{Proof of Theorem \ref{thm:LV}}
\label{sec:proofofthmLV}

In what follows, all the sums will be over $\bs q\neq \bs 0$. We start by noticing that
\begin{align}
 & \sum_{\substack{0\leq q_i\leq T_i \\ i=1,\dotsc,n}}\prod_{i=1}^{m}\|L_{i}\bs{q}\|^{-1}\nonumber \\
 & \leq\sum_{k=0}^{\infty}2^{k+1}\#\left\{\bs{q}\in\prod_{j=1}^{n}[-T_{j},T_{j}]\cap\mb{Z}^{n}\setminus\{\bs{0}\}:2^{-k-1}\leq\prod_{i=1}^{m}\|L_{i}\bs{q}\|<2^{-k}\right\} \nonumber \\
 & \leq\sum_{k=0}^{\infty}2^{k+1}\#\left\{\bs{q}\in\prod_{j=1}^{n}[-T_{j},T_{j}]\cap\mb{Z}^{n}\setminus\{\bs{0}\}:\prod_{i=1}^{m}\|L_{i}\bs{q}\|<2^{-k}\right\}.\nonumber
\end{align}

From this, (\ref{eq:intersection}), and Lemma \ref{lem:emptycase} with $\varepsilon=2^{-k}$, we deduce that

\begin{align}
 \label{eq:cor2eq1} 
 \sum_{\substack{0\leq q_i\leq T_i \\ i=1,\dotsc,n}}\prod_{i=1}^{m}\|L_{i}\bs{q}\|^{-1} & \leq\sum_{k=0}^{\infty}2^{k+1}\# M\left(\bs{L},2^{-k},\frac{1}{2},\bs{T}\right)\nonumber \\
 & =\sum_{k=0}^{\left\lfloor\log_{2}\left(\frac{\bar T^{n}}{\phi(\bar T)}\right)\right\rfloor}2^{k+1}\# M\left(\bs{L},2^{-k},\frac{1}{2},\bs{T}\right).
\end{align}

We use Theorem \ref{prop:cor1} to estimate the right-hand side of (\ref{eq:cor2eq1}). We need $T^{m}/\varepsilon\geq e^{m}$, i.e., $2^{k-m}\geq e^{m}$. To ensure this condition, we split the sum in (\ref{eq:cor2eq1}) into two parts, one for $2^{k-m}<e^{m}$ and one for $2^{k-m}\geq e^{m}$. We find that
\begin{align}
 & \sum_{\substack{0\leq q_i\leq T_i \\ i=1,\dotsc,n}}\prod_{i=1}^{m}\|L_{i}\bs{q}\|^{-1}\leq\sum_{k=0}^{\left\lfloor m\left(1+1/\log 2\right)\right\rfloor}2^{k+1}\# M\left(\bs{L},2^{-k},\frac{1}{2},\bs{T}\right)\nonumber \\
 & +\sum_{k=\left\lceil m\left(1+1/\log 2\right)\right\rceil}^{\left\lfloor\log_{2}\left(\frac{\bar T^{n}}{\phi(\bar T)}\right)\right\rfloor}2^{k+1}\# M\left(\bs{L},2^{-k},\frac{1}{2},\bs{T}\right)\nonumber \\
 & \ll_{m,n}\bar T^{n}+\sum_{k=\left\lceil m\left(1+1/\log 2\right)\right\rceil}^{\left\lfloor\log_{2}\left(\frac{\bar T^{n}}{\phi(\bar T)}\right)\right\rfloor}2^{k+1}(k-m)^{m-1}\left(2^{-k}\bar T^{n}+\left(\frac{2^{-k}\bar T^{n}}{\phi(\bar T)}\right)^{\frac{m+n-1}{m+n}}\right)\label{eq:cor2eq2.1}  \\
 & \ll_{m,n}\sum_{k=0}^{\left\lfloor\log_{2}\left(\frac{\bar T^{n}}{\phi(\bar T)}\right)\right\rfloor}k^{m-1}\left(\bar T^{n}+2^{\frac{k}{m+n}}\left(\frac{\bar T^{n}}{\phi(\bar T)}\right)^{\frac{m+n-1}{m+n}}\right)\label{eq:cor2eq2.2},
\end{align}
where in (\ref{eq:cor2eq2.1}) we estimate $\# M\left(\bs{L},2^{-k},1/2,\bs{T}\right)$ with $\bar T^{n}$ for $k\leq \left\lfloor m\left(1+1/\log 2\right)\right\rfloor$. Note that $\bar T\geq 2$ ensures that (\ref{eq:cor2eq2.1})$\Rightarrow$(\ref{eq:cor2eq2.2}). The required result follows from (\ref{eq:cor2eq2.2}) combined with the trivial estimates $\sum_{k=0}^{K}k^{m-1}\ll_{m} K^{m}$ and $\sum_{k=0}^{K}k^{m-1}2^{\frac{k}{m+n}}\ll_{m,n} K^{m-1}2^{\frac{K}{m+n}}$.

\bibliographystyle{alpha}
\bibliography{References}

\end{document}